\documentclass{article}

\usepackage{array}
\newcolumntype{T}{>{\ttfamily}l}

\usepackage{graphicx}
\usepackage{amsmath}
\usepackage{amsthm}
\usepackage{amsfonts}
\usepackage{amssymb}
\usepackage{mathrsfs}
\usepackage{hyperref}

\usepackage[table,dvipsnames]{xcolor}

\usepackage[percent]{overpic}

\newtheorem{theorem}{Theorem}[section]

\newtheorem{lemma}[theorem]{Lemma}

\newtheorem{proposition}[theorem]{Proposition}

\theoremstyle{definition}
\newtheorem{definition}[theorem]{Definition}

\newtheorem*{theorema*}{Theorem A}
\newtheorem*{corollaryb*}{Corollary B}

\newcommand{\R}{\mathbb{R}}

\newcommand{\dvec}[1]{\overrightarrow{#1}}

\usepackage{subcaption}
\usepackage{tikz}

\begin{document}

\title{An immersed flat polyhedral Klein bottle}
\author{Stepan Paul}

\maketitle

\begin{abstract}
We present a polyhedral surface in Euclidean 3-space with the topology of a Klein bottle such that every vertex has zero angle defect and the star of every vertex is embedded. From the perspective of metric geometry, the polyhedron can be viewed as the image of a piecewise smooth isometric immersion of a flat Klein bottle. It is apparently the first such explicit example.
\end{abstract}

In general, we say a polyhedral surface in $\R^3$ is \emph{flat} if every vertex has zero angle defect and \emph{immersed} when the star of any vertex does not self-intersect. A discrete version of the Gauss-Bonnet Theorem tells us that if a compact polyhedral surface without boundary is flat, it must be a topological torus or Klein bottle. While flat tori have received much recent attention (see Section \ref{sec:background} for details), we focus here on the case of the Klein bottle.

\begin{theorema*}
 The polyhedral surface $\nabla$ constructed in Section \ref{sec:constructions} is flat and immersed in $\R^3$ and has the topology of a Klein bottle.
\end{theorema*}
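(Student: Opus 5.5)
The statement has three separate parts: topology, flatness, and immersion. I would treat them in that order, since each reduces to a finite check on the explicit vertex coordinates and face list of Section \ref{sec:constructions}.

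\textbf{Topology.} From the combinatorial data I will compute $V - E + F$ and verify that it equals $0$. I will also confirm that the surface is a closed manifold: every edge must lie in exactly two faces, and the link of every vertex must be a single cycle. To rule out the torus, I will exhibit non-orientability directly. The plan is to take a cyclic sequence of faces, each adjacent to the next along an edge, and propagate a coherent orientation along it; the orientation should come back reversed. If $\nabla$ is built by gluing a twisted strip, or from pieces related by a reflection symmetry, the natural loop runs once around the core of the twist. Closed, Euler characteristic zero, and non-orientable together give a Klein bottle by the classification of surfaces.

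\textbf{Flatness.} At each vertex $v$, I will sum the face angles $\arccos\bigl(\frac{(a-v)\cdot(b-v)}{|a-v||b-v|}\bigr)$ over the faces in its star and show the sum equals $2\pi$. The construction very likely has symmetry, for example a dihedral or screw symmetry group acting on $\nabla$ by isometries of $\R^3$. Using it, I will reduce to one representative per vertex orbit. Exact coordinates, such as points on regular polygons or expressions involving $\cos(\pi/n)$, may let me verify the sum symbolically. A cleaner alternative is to write down a planar development: unfold the faces around $v$ into the plane and check that the cycle closes up with no overlap and no gap.

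\textbf{Immersion.} This is the step I expect to be the main obstacle. For each vertex $v$, I must show that the closed star $\mathrm{St}(v)$ is embedded. Concretely, any two faces of the star may meet only in their combinatorially shared edge or vertex. I will check each non-adjacent pair of faces in the star with a triangle–triangle (or polygon–polygon) intersection test, reducing to orbit representatives by symmetry. For a cleaner argument, I would look for a direction $u_v$ such that projecting the star orthogonally along $u_v$ is injective. One sufficient condition: the projected faces have consistent orientation, and their angles around the image of $v$ sum to exactly $2\pi$, so the projected star winds once.

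Coplanar adjacent faces and very thin angles are where the naive test is delicate. There I would fall back on exact arithmetic with the given coordinates, or on a computer-verified but rigorous rational computation. Global self-intersections of $\nabla$ are allowed, so nothing beyond stars needs checking.
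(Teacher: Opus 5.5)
The genuine gap is in flatness. An angle sum of exactly $2\pi$ is an equality, and no floating-point or interval computation can certify an equality. The paper's accompanying data are numerical approximations, so ``exact arithmetic with the given coordinates'' would in fact show the sums are \emph{not} $2\pi$.

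Nor do the interior vertices have nice closed forms you could sum symbolically. The points $A_i=P_i+\alpha_i\hat s$ and $C_i$ come from a recursion through $\bar\beta$ and $\bar\delta$, starting at $\alpha=3.1$, $\gamma=2.5$, and each step adjoins a new square root $|A_iC_i|$. Your ``unfold and check that the cycle closes up'' is the right picture, but as stated it is again an equality check with no way to certify it.

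The missing idea is that the paper builds the planar development into the construction:
\begin{itemize}
\item $\beta$ is \emph{defined} so that $B$ is equidistant from $C$ and $C'=A+|AC|\hat s$ (Lemma~\ref{lem:b}). This forces $|BC|=\sqrt{1+\ell^2}$.
\item $\delta$ is defined so that $D$ is equidistant from $B$ and $B'$ (Lemma~\ref{lem:d}). This forces $|BD|=|\Delta|$.
\end{itemize}
Hence every face of a strip is exactly congruent to its piece of the planar rectangle in Figure~\ref{fig:s-geometric}, as soon as the generated parameters are positive and the discriminant satisfies $\Delta>0$ (Theorem~\ref{thm:ac-lemma}). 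These \emph{strict} inequalities are what interval arithmetic certifies in Theorem~\ref{thm:example-flat}. The reflection symmetry handles the other half of the strips. The coaxial connections need no numerics at all (Lemma~\ref{lem:straight-implies-rect}). The glued vertices are flat because each side contributes two exact right angles from the trapezoids. Without a device like this that turns the equality into open conditions, or a fully set-up exact algebraic computation (which your proposal does not provide), flatness is not proved.

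The rest of your plan is closer to sound.
\begin{itemize}
\item \textbf{Immersion.} The immersion conditions here are strict, so your brute-force check is the same kind of argument as the paper's. The paper reduces to kitty-corner pairs via Lemma~\ref{lem:kitty}, uses the cofactor-sign criterion of Lemma~\ref{lem:m}, and uses a separating plane at glued vertices. However, your restriction to non-adjacent pairs needs exactly the argument of Lemma~\ref{lem:kitty}, since faces sharing an edge can also overlap.
\item \textbf{Topology.} Computing $V-E+F=0$ directly and exhibiting an orientation-reversing loop of faces is a fine replacement for the paper's Gauss--Bonnet-plus-torsion argument, and it has the advantage of not depending on flatness. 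You must also verify connectedness, since e.g.\ $K\sqcup T^2$ is closed, non-orientable, and has $\chi=0$.
\end{itemize}
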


\begin{figure}
 \centering
 \includegraphics[width=\textwidth]{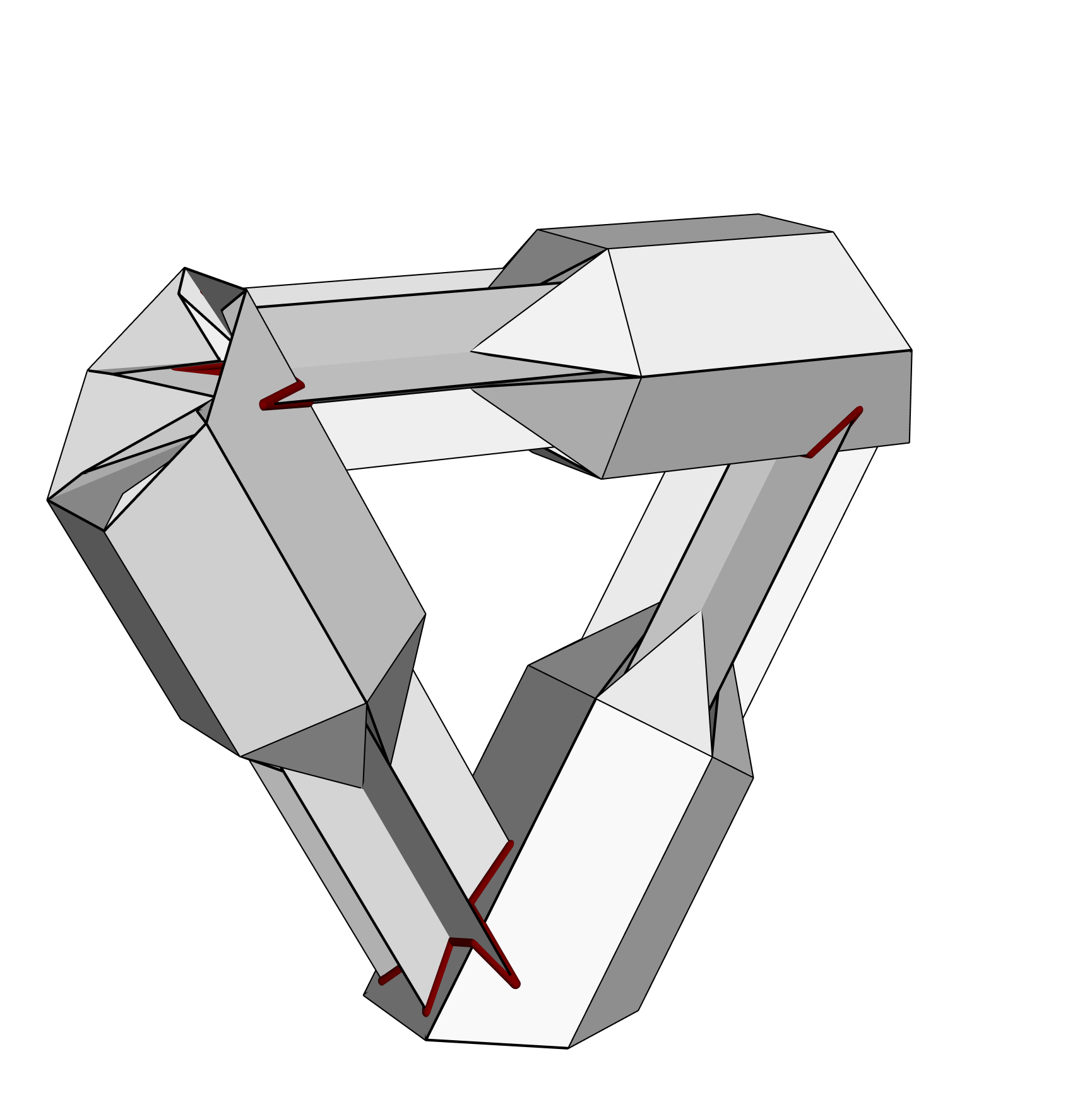}
 \caption{The polyhedral surface $\nabla$. The self-intersection curves are shown in maroon. We can understand the image as consisting of tubes with regular hexagonal and Star-of-David shaped cross-sections connected to each other by complicated sets of triangular faces---one type of connection along the edges of the triangular structure, and another type that folds the star-shaped tube up into the hexagonal tube at an angle so that it crashes through the side of the wider hexagonal tube.}
 \label{fig:flat-kb}
\end{figure}

The example $\nabla$, which has $108$ vertices and $168$ faces (after merging coplanar faces), is apparently the first explicit example of an immersed flat polyhedral Klein bottle. While existence of a polyhedral surface with these properties is implied by the results of Burago and Zalgaller \cite{bz1995}, the proofs operate at a high level of generality and involve tens of thousands of vertices. The contribution of Theorem A is that it exhibits a specific and constructive example that is comprehensible and generalizable, and which may serve as a starting point for lines of inquiry that have been fleshed out for flat tori but remain open for flat Klein bottles.

 From the perspective of metric geometry, the example can be thought of as an isometric immersion---defined as a locally injective path isometry---from a flat Klein bottle into $\R^3$; again it seems to be the first such explicit example although existence is implied by the work of Nash \cite{nash54} and Kuiper \cite{kuiper55}. Since a Klein bottle cannot be embedded into $\R^3$, an isometric immersion preserves about as much of the topology and geometry of a flat Klein bottle as one could hope for.
 
\begin{corollaryb*}
 The realization map for the polyhedral surface $\nabla$ constructed in Section \ref{sec:constructions} is an isometric immersion of a flat Klein bottle into $\R^3$ with respect to its induced length structure.
\end{corollaryb*}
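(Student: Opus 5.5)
Corollary B follows from Theorem A once three points are checked: the abstract surface carries a flat metric, that metric is a Klein bottle's, and the realization map is both a path isometry and locally injective. The plan is as follows.

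\textbf{Step 1: define the metric.} Let $K$ be the abstract simplicial complex underlying $\nabla$; by Theorem A it is a topological Klein bottle. Give each triangle of $K$ the Euclidean metric of its image triangle in $\R^3$. Two triangles sharing an edge have images sharing that same segment, so the edge lengths agree and the gluings are isometries. This gives a length structure on $K$, namely the induced intrinsic metric.

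\textbf{Step 2: show the metric is flat.} At an interior point of a face or of an edge, a neighborhood is isometric to a planar disk; for an edge, unfold the two adjacent triangles into the plane. At a vertex $v$, the star is a cone whose total angle is the sum of the face angles at $v$. Theorem A says every angle defect is zero, so this sum is $2\pi$, and the star is isometric to a planar disk. Hence $K$ is a flat (locally Euclidean) Klein bottle. This step is routine.

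\textbf{Step 3: the realization map $f\colon K\to\R^3$ is a path isometry.} The map $f$ is affine on each triangle and isometric there. Any rectifiable path in $K$ can be subdivided into finitely many, or countably many, pieces each lying in a single closed triangle. On each piece, $f$ preserves Euclidean length. Therefore $f$ preserves the length of every path, which is the definition of path isometry with respect to the induced length structure. The one point needing care is that the subdivision handles paths crossing edges or vertices infinitely often. This is standard, because length is additive and lower semicontinuous; alternatively, approximate by polygonal paths.

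\textbf{Step 4: local injectivity.} For every point $p\in K$, take the open star of the closed cell containing $p$ in its interior; if $p$ is a vertex, this is the vertex star. The cell's closed star lies in the closed star of any vertex of that cell. Theorem A states that each vertex star is embedded, meaning $f$ restricted to it is injective. Hence $f$ is injective on a neighborhood of every point.

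Steps 3 and 4 together show that $f$ is an isometric immersion. I expect the only real subtlety is matching the paper's notion of "star embedded" to injectivity on a neighborhood of each edge point and face point. Taking a vertex star containing that point settles it.
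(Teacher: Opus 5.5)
Your proposal is correct and follows the paper's route. The paper derives Corollary B from Theorem A through the dictionary in its Preliminaries: the realization map is automatically a path isometry for the induced length structure, metric flatness at a vertex is exactly the angle-sum condition, and being immersed is equivalent to the realization map being locally injective. Your Steps 1--4 simply make these identifications explicit.

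One correction to Step 3: a rectifiable path need not split into countably many pieces each lying in a single closed triangle, since it can alternate between the two sides of an edge on a Cantor set of times. The inequality $L(\gamma)\le L(f\circ\gamma)$ is better obtained by refining any partition so that consecutive points share a closed triangle (possible by compactness), and then using that $d(x,y)\le|f(x)-f(y)|$ for such points.
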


Figure \ref{fig:flat-kb} depicts the polyhedral surface $\nabla$. The accompanying datasets include the combinatorial structure---given as a list of ordered tuples in $\{1,2,\ldots,108\}$ defining the faces---and a list of numerical approximations to the points in $\R^3$ to which the vertices map.\footnote{The data set for a similarly constructed surface using different parameters was first presented by the author in \cite{paul2025piecewise}.} We can understand the flatness property intuitively by saying that the polyhedron can (locally) be folded from flat paper with cut-outs allowing for the self-intersections.\footnote{In fact, an origami sculpture of such a polyhedron constructed by the author appeared in the Maison Poincar\'e in Paris \cite{paul2026flat}.}

The polyhedral surface is constructed in three stages.
\begin{enumerate}
 \item We start by giving a construction for polyhedral rectangular strips we call ``zee-bridges'' in Section \ref{sec:zee-bridge}.
 \item Next we show how to glue several zee-bridges together to form topological annuli that we call ``tube connections'' in Section \ref{sec:tube-joint}. These tube connections can be thought of as polyhedral transitions from a cylinder over one polygon to a cylinder over another.
 \item In Section \ref{sec:constructions}, we show how to glue six tube connections together to form the polyhedral surface $\nabla$ shown in Figure \ref{fig:flat-kb}.
\end{enumerate}

Along the way, we prove that the satisfaction of a certain set of inequalities ensures flatness at each vertex in Theorem \ref{thm:ac-lemma} and Lemma \ref{lem:rectangluar-zee-bridge-to-flat-tubes}. And in Section \ref{sec:local-injectivity}, we describe sufficient inequalities for ensuring the immersion property.

 Computational verification of these inequalities for the main construction is the content of Theorems \ref{thm:example-flat} and \ref{thm:example-locally-injective}. In fact, an open set in the parameter space leads to a family of such flat immersed polyhedral Klein bottles, and many variations on the construction presented here also satisfy these conditions. A thorough exploration of the parameter space is the subject of future study.




Before getting started on the details, we give some preliminary definitions and background in Sections \ref{sec:preliminaries} and \ref{sec:background} respectively.


%
%

\section{Preliminaries} \label{sec:preliminaries}

Here we frame the main construction in terms of the vocabulary of polyhedral and metric geometry. For more details, see, for example, \cite{cromwell1997polyhedra} for notions in polyhedral geometry and \cite{burago2001course} for background on metric geometry.

\subsection{Polyhedral surfaces}

Formally, for us, a \emph{polyhedral surface} (possibly with boundary) in $\R^3$ consists of a combinatorial structure together with a vertex realization map. The \emph{combinatorial structure} $S=(V,F)$ of a polyhedral surface consists of a finite set $V$---the \emph{vertices}---and a collection $F$---the \emph{faces}---of cyclically ordered subsets of $V$ so that each pair of distinct vertices appears as a consecutive pair---called an \emph{edge}---in exactly two faces (in the case of an \emph{interior edge}), one face (in the case of a \emph{boundary edge}), or zero faces. For cyclically ordered tuples, we write $[v_0,\ldots,v_{k-1}]$ with the square brackets indicating that indices are taken modulo $k$.

 The combinatorial structure $S$ can be identified with an abstract regular CW complex with the topology of a 2-manifold with boundary---what we'll call a \emph{CW surface}---and we often represent the combinatorial structure with a planar diagram of this CW surface with edge identifications indicated as needed.

 A \emph{vertex realization map} for $S$ is a mapping $\phi^0:V\rightarrow\R^3$ with the property that each face maps to the cyclically ordered vertices of a strictly convex planar polygon, which we call the \emph{image} of that face; the \emph{image} of the entire polyhedral surface is the union of the images of the faces. The \emph{image} of an edge between two points is the line segment connecting their images.
 
 We say that a polyhedral surface is \emph{flat} at an interior vertex $u$ when the angle sum at $u$ is exactly $2\pi$ and \emph{flat} overall if it is at every vertex.
 
 
 A polyhedral surface is said to be \emph{embedded} if the images of two faces intersect only at the image of a shared vertex or a shared edge if they have one.
  We say a polyhedral surface is \emph{immersed} if the star of each vertex---the polyhedral surface comprised of the faces containing it and their vertices---is embedded.
 
  When we talk about a geometric operation (translation, reflection, etc) being performed on a polyhedral surface $(S,\phi^0)$, it will be understood that it is to be composed with $\phi^0$. And when we talk about the topology of $(S,\phi^0)$, we are talking about that of $S$ as a CW surface, noting that this may be different from the topology of the image if it is not embedded.
  
 \subsection{Piecewise Linear Maps}
   
  If we triangulate the abstract CW surface $S$ associated with the combinatorial structure of a polyhedral surface without adding any vertices, we can define a piecewise linear (PL) map $\phi$ from the resulting simplicial complex (with respect to the barycentric coordinates) to $\R^3$ by extending the vertex realization map $\phi^0$, and thus obtain a map $\phi:S\rightarrow\R^3$, which we call the \emph{extended realization map} (or just the \emph{realization map}), that embeds each face. The image of $\phi$ is then the same as the image of the polyhedral surface as defined above. The map $\phi$ being a topological immersion\footnote{A topological immersion is a continuous locally injective map (see Chapter 4 of \cite{lee2000introduction}, for example) which is different from a smooth immersion; there is no assumption of differentiability.} is equivalent to the polyhedral surface being immersed as defined above.
 
\subsection{Metric Geometry}

 From the perspective of metric geometry, $\phi$ induces a pullback length structure on each face, and these can be patched together with respect to the combinatorial structure to give a global length structure on $S$. Note that this construction allows the image of $\phi$ to self-intersect without violating positive-definiteness of the length structure on $S$ since distances are induced one face at a time, and gluing happens with respect to the combinatorial structure.

 Under this induced length structure on $S$, $\phi$ is automatically a path isometry. In this context, an injective path isometry is called an \emph{isometric embedding} and a locally injective path isometry is called an \emph{isometric immersion}, coinciding with the notions of embedded and immersed polyhedral surfaces above.
 
 Flatness in metric geometry is defined as being locally isometric to a Euclidean space. Under the induced length structure of a PL map, $S$ is automatically flat on the interior of each face and edge, and flatness at the vertices is equivalent to the angle sum version of the flatness condition on polyhedral surfaces above. Thus from the perspective of metric geometry, the main construction gives us an isometric immersion of a flat Klein bottle into $\R^3$.
 
 Relating this to differential geometry, if a manifold $M$ with a compatible length structure is flat, then it is isometric to a smooth Riemannian manifold. Applying this to surfaces, by the Gauss-Bonnet Theorem, any compact surface without boundary with a compatible globally flat length structure must have Euler characteristic zero and is thus a topological torus or Klein bottle.

\section{Background} \label{sec:background}

As mentioned in the introduction, the story of the flat Klein bottle mostly mirrors that of the flat torus, but remains largely untold. While neither a $\mathcal C^2$ isometric embedding of a flat torus nor an immersion of a flat Klein bottle exists, the existence of $\mathcal C^1$ maps with these properties is implied by the famous results of Nash \cite{nash54} and Kuiper \cite{kuiper55} in the 1950s. Burago and Zalgaller proved results in 1960 that imply the existence of a PL isometric embedding of a flat torus \cite{bz1960}, and in 1995 of a PL isometric immersion of a flat Klein bottle \cite{bz1995}. The first explicit examples of isometrically embedded flat tori seem to be a family of polyhedral examples due to Brehm in 1978 \cite{brehm1978}. And in 2012, Borrelli, Jabrane, Lazarus, and Thibert gave an explicit $\mathcal C^1$ embedding of a flat torus \cite{bjlt12}.

Polyhedral flat tori have attracted recent attention with proofs that every flat torus can be embedded under Brehm's construction \cite{tsuboi2024} and that the minimum number of vertices for a flat polyhedral torus is eight \cite{schwartz2026}. Such questions have not been addressed at all for flat Klein bottles.
 
In 2021, the author exhibited a piecewise smooth path isometry from a flat Klein bottle into $\R^3$ using the theory of curved-crease origami \cite{me2021}; its image is shown in Figure \ref{fig:cc}. And in a 2025 preprint, Hisakawa, Kaji, and Kawai construct a flat polyhedral Klein bottle \cite{hkk2025polyhedra}. Both constructions fail to be locally injective/immersed at finitely many points.

\begin{figure}
 \begin{subfigure}[b]{0.45\textwidth}
    \centering
    \includegraphics[width=\textwidth]{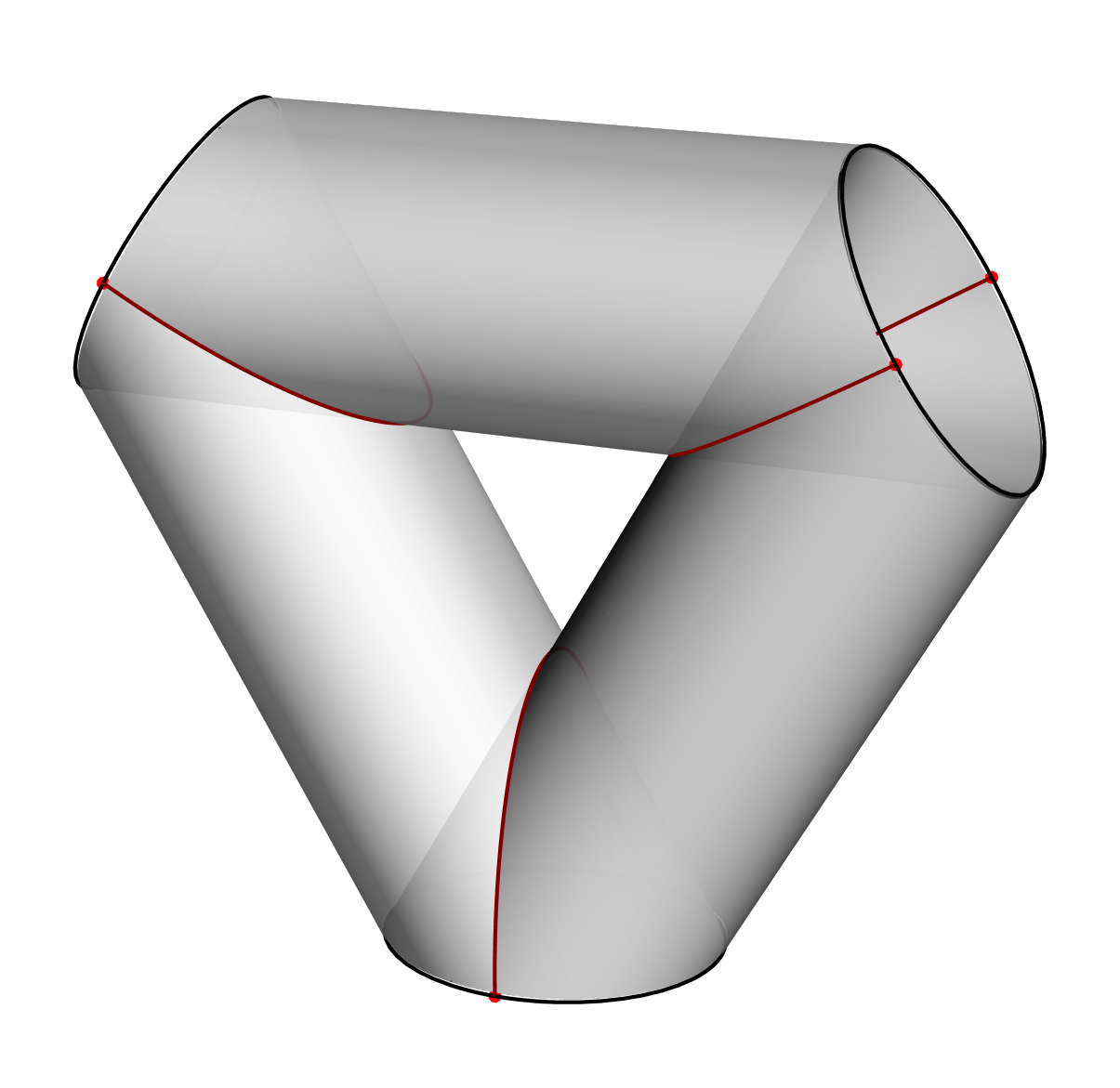}
    \caption{}
    \label{fig:cc}
  \end{subfigure}
  \hfill 
  \begin{subfigure}[b]{0.45\textwidth}
    \centering
    \includegraphics[width=\textwidth]{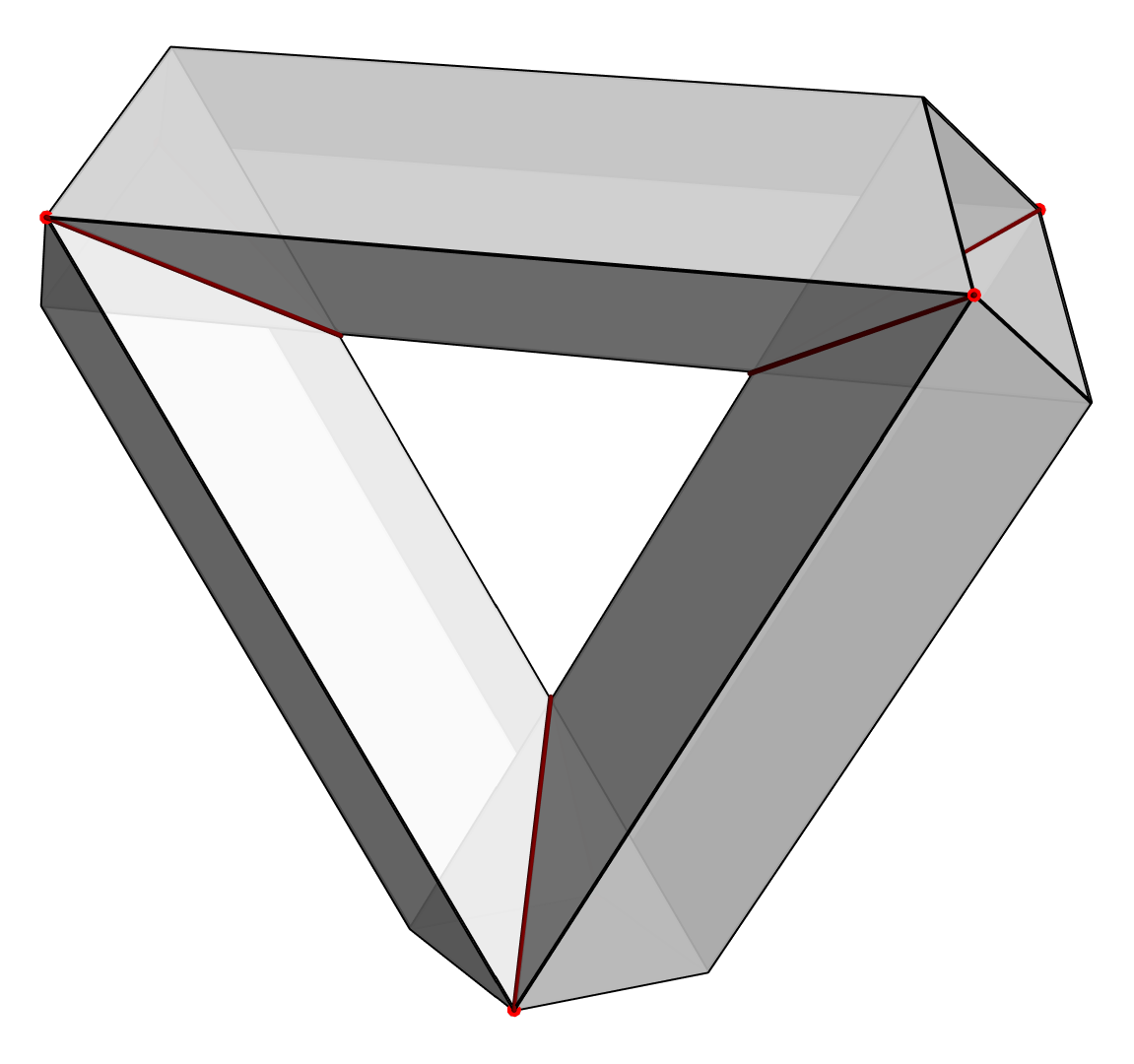}
    \caption{}
    \label{fig:square}
  \end{subfigure}
 
 \caption{(a) The image of the piecewise smooth path isometry of a flat Klein bottle from \cite{me2021}. The three full ellipses can be thought of as curved-crease origami folds, and the three maroon half-ellipses are the self intersections. The map fails to be locally injective at the pre-images of six red points where the ellipses and half-ellipses intersect. (b) A polyhedral version of the previous using square-shaped tubes.} \label{fig:non-immersed}
\end{figure}

Thus the primary innovation of the present article has to do with the immersion property. The intuition behind the main construction is that we can start by discretizing the piecewise cylindrical construction from \cite{me2021}. Na\"ively switching from circular cylinders to convex polygonal ones as in Figure \ref{fig:square} can be done in a way that preserves flatness but the results will necessarily still have non-immersed vertices. However, we can use the idea behind the embedded flat torus construction of Quintanar \cite{quintanar2020} to ``shrink'' a convex polygonal cylinder into a star-shaped one of smaller diameter via a sequence of triangular faces connecting the two. The technical parts of the construction we present deal with generalizing the construction to allow the convex polygonal cylinder and star-shaped cylinder to be centered along different axes, and confirming that this shrinking operation allows the self-intersections to happen away from the vertices so that the vertices are immersed.

\section{Zee-Bridges}\label{sec:zee-bridge}

The fundamental building block of the constructions we present is the ``rectangular zee-bridge'' (so named because of the geometry seen in Figure \ref{fig:s-geometric}), which we define below.

Let $W,X,Y,Z\in\R^3$ be points such that $\hat u=\dvec{WX}$ and $\hat v=\dvec{YZ}$ are unit vectors, and let $\hat s,\hat t$ be unit vectors with 
$$\hat s\cdot\hat u=0 \qquad \hat t\cdot\hat v=0.$$
In this case, we call $\mathcal F=(W,X,Y,Z,\hat s,\hat t)$ a \emph{bridge frame}.

A \emph{zee-bridge} $\mathcal Z=(S,\phi^0)$ for the bridge frame $\mathcal F$ is a polyhedral surface with boundary with the combinatorial structure $S$ shown in Figure \ref{fig:s} with vertex realization map $\phi^0$ defined by
$$\phi^0(w)=W \qquad \phi^0(x)=X \qquad \phi^0(y)=Y \qquad \phi^0(z)=Z$$
\begin{equation} \label{eq:abcd}
\begin{array}{ll}A:=\phi^0(a)=W+\alpha\hat s & B:=\phi^0(b)=X+\beta\hat s\\
C:=\phi^0(c)=Y-\gamma\hat t & D:=\phi^0(d)=Z-\delta\hat t
\end{array}
\end{equation}
for some $\alpha,\beta,\gamma,\delta>0$, which we call the \emph{parameters} of the zee-bridge. Note that under this construction, the images of the faces $[w,x,a,b]$ and $[z,y,c,d]$ are strictly convex planar trapezoids with right angles at the vertices $W,X,Y,Z$.

\begin{figure}
\centering
 \begin{subfigure}[b]{0.45\textwidth}
    \centering
    \begin{tikzpicture}
 
 \draw[fill=gray!20] (0,0) node[anchor=north] {$a$} -- (60:1) node[anchor=south] {$b$} -- (105:{sqrt(2)}) node[anchor=south] {$x$} -- (150:1) node[anchor=east]{$w$} -- (0,0);
 \draw[fill=gray!20] (0,0) -- (1,0) node[anchor=north] {$c$} -- +(60:1) node[anchor=south] {$d$} -- +(120:1) -- (0,0);
 
 \draw[fill=gray!20] (1,0) -- +(-30:1) node [anchor=north] {$y$} -- +(15:{sqrt(2)}) node[anchor=west] {$z$} -- +(60:1) -- (1,0);
 
 \draw (1,0) -- (60:1);

\end{tikzpicture}
    \caption{}
    \label{fig:s}
  \end{subfigure}
  \hfill
  \begin{subfigure}[b]{0.45\textwidth}
    \begin{overpic}[width=\textwidth,tics=10]{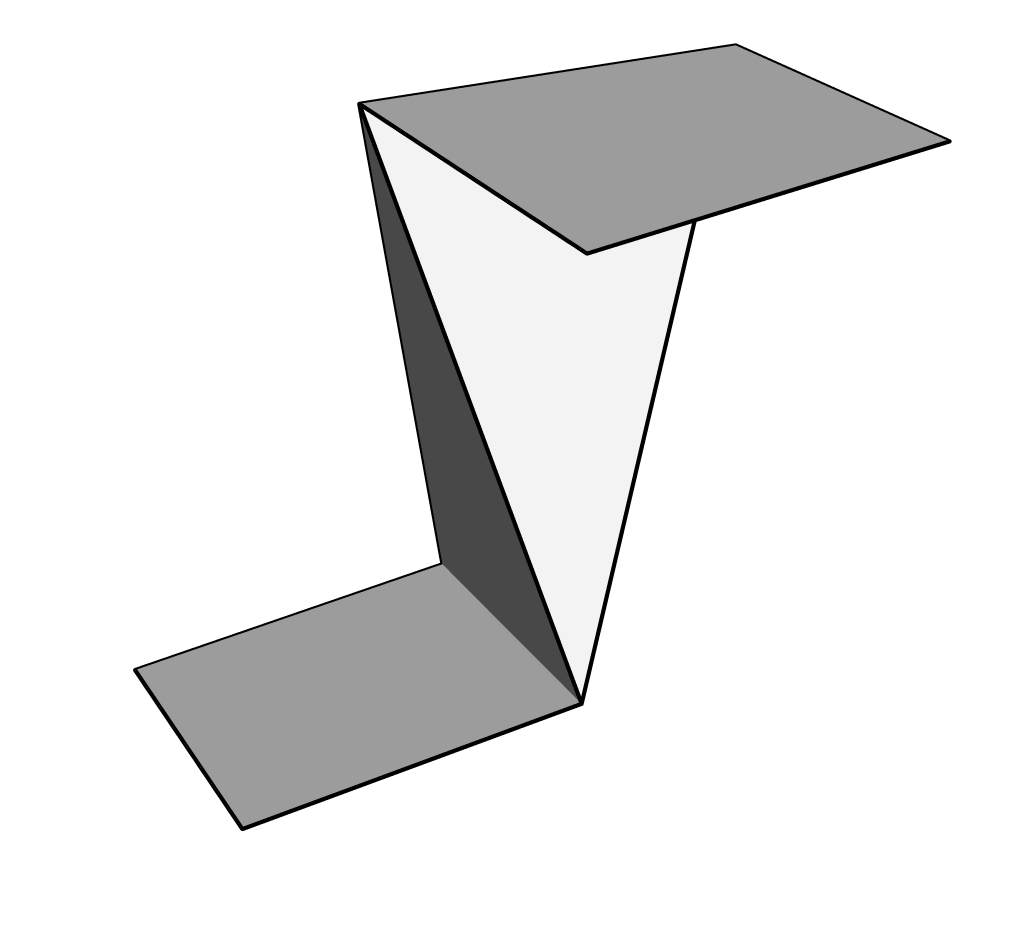}
 \put (21,6) {$X$}
 \put (7,25) {$W$}
 \put (35,40) {$A$}
 \put (58,18) {$B$}
 \put (30,81) {$C$}
 \put (71,91) {$D$}
 \put (57,63) {$Y$}
 \put (95,77) {$Z$}
\end{overpic}
    \caption{}
    \label{fig:z}
  \end{subfigure}
  \caption{(a) The combinatorial structure $S$ of a zee-bridge. (b) The image of a zee-bridge in $\R^3$.}

\end{figure}

For a bridge frame $\mathcal F=(W,X,Y,Z,\hat s,\hat t)$, define $\tilde{\mathcal F}$ to be the bridge frame $(Z,Y,X,W,-\hat t,-\hat s)$. A zee-bridge for $\mathcal F$ with parameters $\alpha,\beta,\gamma,\delta$ then has the same image as a zee-bridge for $\tilde{\mathcal F}$ with parameters $\delta,\gamma,\beta,\alpha$.

We say a zee-bridge is \emph{rectangular} if the angle sums at $a$, $b$, $c$, and $d$ are each $\pi$. In this case, the induced length structure on $S$ is that of a planar rectangle as shown in Figure \ref{fig:s-geometric}.

 \begin{figure}
  \centering
  \begin{tikzpicture}[scale=1]
   \def\w{9}
   \def\wa{2.5}
   \def\wb{4}
   \def\wc{5}
   \def\we{7}
   
   \draw[fill=gray!20] (0,0) node[anchor=north east] {$w$} -- (\w,0) node[anchor=north west] {$y$} -- node[anchor=west] {$1$} (\w,1) node[anchor=south west] {$z$} -- (0,1) node[anchor=south east] {$x$} -- node[anchor=east] {$1$}(0,0);
   \draw (\wa,0) node[anchor=north] {$a$} -- (\wb,1) node[anchor=south] {$b$} -- (\wc,0) node[anchor=north] {$c$} -- (\we,1) node[anchor=south] {$d$};
   
   
   \draw (\wa/2,0) node[anchor = north] {$\alpha$};
   \draw ({(\wc+\w)/2},0) node[anchor = north] {$\gamma$};
   \draw (\wb/2,1) node[anchor = south] {$\beta$};
   \draw ({(\we+\w)/2},1) node[anchor = south] {$\delta$};
   
   \draw ({(\wa+\wc)/2},0) node[anchor=north] {$|AC|$};
   \draw ({(\wb+\we)/2},1) node[anchor=south] {$|BD|$};
   
  \end{tikzpicture}
  \caption{The combinatorial structure $S$ with the length structure induced by a rectangular zee-bridge. The lengths of the boundary edges are labelled.}\label{fig:s-geometric}
 \end{figure}

\begin{theorem}\label{prop:zee-bridge}
 Let $\mathcal F=(W,X,Y,Z,\hat s,\hat t)$ be a bridge frame, and let $\mathcal Z$ be a zee-bridge with parameters $\alpha,\beta,\gamma,\delta$. Then in the notation above, $\mathcal Z$ is rectangular if and only if
   \begin{equation}
    \alpha+|AC|+\gamma=\beta+|BD|+\delta \quad \text{and} \quad |BC|=\sqrt{1+(|AC|+\alpha-\beta)^2}. \label{eq:zee-bridge}
   \end{equation} 
\end{theorem}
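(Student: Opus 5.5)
The plan is to argue through the planar development of the strip. From Figure \ref{fig:s}, $S$ has four faces: the trapezoid $[w,x,b,a]$, the triangles $[a,c,b]$ and $[b,c,d]$ (sharing the diagonal $bc$), and the trapezoid $[c,y,z,d]$.

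Start by recording the face geometry fixed by the bridge frame. In the trapezoid $WXBA$ we have right angles at $W$ and $X$, $|WX|=1$, $|WA|=\alpha$ and $|XB|=\beta$ with $WA\parallel XB$. Hence $|AB|=\sqrt{1+(\alpha-\beta)^2}$, and the angles at $A$ and $B$ are supplementary. Likewise $|CD|=\sqrt{1+(\gamma-\delta)^2}$, and the angles of $YZDC$ at $C$ and $D$ are supplementary. The angle-sum conditions for rectangularity are then:
\begin{itemize}
\item at $a$: $\angle WAB+\angle BAC=\pi$;
\item at $b$: $\angle XBA+\angle ABC+\angle CBD=\pi$;
\item at $c$: $\angle ACB+\angle BCD+\angle YCD=\pi$;
\item at $d$: $\angle CDB+\angle ZDC=\pi$.
\end{itemize}

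For the forward direction, assume $\mathcal Z$ is rectangular. Every interior point of $S$ is then flat and $S$ is a topological disk, so $S$ develops isometrically into the plane face by face. The boundary points $a,b,c,d$ have angle $\pi$, so the boundary paths $w\,a\,c\,y$ and $x\,b\,d\,z$ develop to straight segments. The corners $w,x,y,z$ have angle $\pi/2$, and $wx$, $yz$ have length $1$. The development is therefore a $1\times L$ rectangle, as in Figure \ref{fig:s-geometric}. Reading off the two long sides gives $L=\alpha+|AC|+\gamma=\beta+|BD|+\delta$. Place coordinates with $w=(0,0)$ and $x=(0,1)$. Then $b=(\beta,1)$ and $c=(\alpha+|AC|,0)$, so the developed distance $|bc|=|BC|$ equals $\sqrt{1+(|AC|+\alpha-\beta)^2}$.

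For the converse, assume \eqref{eq:zee-bridge} and build the rectangle $[0,L]\times[0,1]$ with $L$ the common value. Mark the points
\[
w=(0,0),\quad x=(0,1),\quad a=(\alpha,0),\quad c=(\alpha+|AC|,0),\quad y=(L,0),
\]
\[
b=(\beta,1),\quad d=(\beta+|BD|,1),\quad z=(L,1).
\]
Because all the lengths are positive, these points appear in order along each long side. So the segments $ab$, $bc$, $cd$ are pairwise non-crossing and cut the rectangle into two right trapezoids and two triangles. Next, check that each planar piece is congruent to the corresponding face of $\mathcal Z$:
\begin{itemize}
\item the trapezoids have the same legs $1,\alpha,\beta$ (resp.\ $1,\gamma,\delta$) and right angles at the corners;
\item triangle $abc$ has sides $\sqrt{1+(\alpha-\beta)^2}=|AB|$, $|AC|$, and $\sqrt{1+(|AC|+\alpha-\beta)^2}=|BC|$ by the second equation;
\item triangle $bcd$ has sides $|BC|$, $|BD|$, and $|cd|=\sqrt{1+(\alpha+|AC|-\beta-|BD|)^2}=\sqrt{1+(\delta-\gamma)^2}=|CD|$, using the first equation.
\end{itemize}
By SSS, the corresponding angles agree. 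Since the pieces tile a straight edge at $a,b,c,d$ in the plane, the angle sums there are $\pi$.

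The main subtlety is in the converse. One has to be sure the congruences match the angles at each vertex: each triangle angle must correspond to the right labelled vertex, and the pieces must tile, not overlap or leave gaps, around $b$ and $c$. The point to verify carefully is that the ordering $a<c$ and $b<d$ along the long sides, which is automatic from positivity, forces the tiling. The only other thing to check is that the identification $|cd|=|CD|$ really follows from the first equation.
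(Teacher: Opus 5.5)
Your proposal is correct and follows the paper's proof. The forward direction reads both equations off the developed $1\times L$ rectangle. For the converse, you lay out the rectangle of Figure~\ref{fig:s-geometric} and match the faces: the trapezoids by construction and the two triangles by SSS. Your check of the second triangle via $|cd|=|CD|$ is an equivalent rewriting of the paper's remark that the two equations give $|BC|=\sqrt{1+(|BD|+\delta-\gamma)^2}$. Your ordering and tiling discussion makes explicit what the paper leaves to the figure.
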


\begin{proof}
 The forwards direction is clear from Figure \ref{fig:s-geometric} since faces of the image are congruent to faces of the domain under the induced length structure. For the backwards direction, we will show that if the equations of (\ref{eq:zee-bridge}) are satisfied, then the corresponding congruence of faces still holds so that the angle sums in the image will be as in Figure \ref{fig:s-geometric}.
 
 First we note that by construction, the trapezoids $[W,X,B,A]$ and $[Z,Y,C,D]$ are congruent to the trapezoids $[w,x,b,a]$ and $[z,y,c,d]$ in Figure \ref{fig:s-geometric}. Then we note that the triangles $[A,B,C]$ and $[a,b,c]$ are congruent since their respective edge lengths are equal---$|AB|=|ab|$ by the trapezoid congruence, $|AC|=|ac|$ by the induced length structure, and $|BC|=|bc|$ by the second equation of (\ref{eq:zee-bridge}). The equations together imply that $|BC|=\sqrt{1+(|BD|+\delta-\gamma)^2}$, so we can similarly prove triangles $[D,C,B]$ and $[d,c,b]$ are congruent.
\end{proof}

Later on, we will need to find $\beta$ and $\delta$ which make a zee-bridge rectangular (if they exist) given $\alpha$ and $\gamma$. The next two lemmas give candidate values for $\beta$ and $\delta$, and Theorem \ref{thm:ac-lemma} gives a sufficient condition for their validity.

\begin{lemma}\label{lem:b}
 Given a bridge frame $\mathcal F$ and $\alpha,\gamma>0$, in the notation above, let $C'=A+|AC|\hat s$. Then there exists $\beta\in\R$ such that $B=X+\beta\hat s$ is equidistant from $C$ and $C'$. Furthermore, if $C\neq C'$,
 \begin{equation}\label{eq:beta}
  \beta=\alpha+\frac{\dvec{AC}\cdot\hat u}{|AC|-\dvec{AC}\cdot\hat s}
 \end{equation}
 is the unique value with this property.
\end{lemma}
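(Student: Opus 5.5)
The plan is to reduce the equidistance condition to a linear equation in $\beta$ and solve it directly. The key observation is that $\hat s\perp\hat u$ and $X=W+\hat u$ (since $\dvec{WX}=\hat u$). Therefore
$$B=X+\beta\hat s=A+\hat u+(\beta-\alpha)\hat s.$$
Write $\lambda=\beta-\alpha$, $\ell=|AC|$ and $\vec p=\dvec{AC}$, so that $C=A+\vec p$ and $C'=A+\ell\hat s$.

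First I expand the two squared distances using $|\hat u|=|\hat s|=1$ and $\hat u\cdot\hat s=0$:
$$|BC|^2=|\hat u+\lambda\hat s-\vec p|^2=1+\lambda^2+\ell^2-2\,\vec p\cdot\hat u-2\lambda\,\vec p\cdot\hat s,$$
$$|BC'|^2=|\hat u+(\lambda-\ell)\hat s|^2=1+(\lambda-\ell)^2=1+\lambda^2-2\lambda\ell+\ell^2.$$

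Setting these equal, the quadratic terms cancel and we are left with the linear equation
$$\lambda\,(\ell-\vec p\cdot\hat s)=\vec p\cdot\hat u.$$

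Next I consider the coefficient $\ell-\vec p\cdot\hat s$, which is where existence and uniqueness are decided. By Cauchy--Schwarz, $\vec p\cdot\hat s\le|\vec p|=\ell$, with equality iff $\vec p$ is a nonnegative multiple of $\hat s$, i.e.\ iff $\vec p=\ell\hat s$, i.e.\ iff $C=C'$.

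If $C\neq C'$, the coefficient is strictly positive. The linear equation then has exactly one solution, and $\beta=\alpha+\lambda$ gives (\ref{eq:beta}). This proves both existence and uniqueness in that case.

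If $C=C'$, the two points coincide, so every $B$ is trivially equidistant from them, and in particular some $\beta\in\R$ exists. This case is exactly why the uniqueness claim is stated only under the hypothesis $C\neq C'$.

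I expect no real obstacle here. The only care needed is the Cauchy--Schwarz equality case, which is what identifies the vanishing of the denominator with $C=C'$. The positivity of $\alpha,\gamma$ plays no role in this argument.
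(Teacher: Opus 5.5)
Your proof is correct and follows essentially the paper's route. The paper writes $B=A+\hat u+(\beta-\alpha)\hat s$ and imposes $\dvec{CC'}\cdot\dvec{AB}=0$, using that the perpendicular bisector plane of $CC'$ passes through $A$; this gives exactly the linear equation you obtain by subtracting squared distances, and your Cauchy--Schwarz remark spells out the paper's observation that the denominator vanishes precisely when $C$ lies on the closed ray from $A$ in direction $\hat s$, i.e.\ when $C=C'$.
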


\begin{proof}
 If $C$ lies along the closed ray from $A$ in direction $\hat s$, then $C=C'$ so the conclusion is trivial.
 
 Otherwise, $C\neq C'$, and the plane of points equidistant from $C$ and $C'$ has normal vector $\dvec{CC'}=|AC|\hat s-\dvec{AC}$ and passes through $A$. Then $B=A+\hat u+(\beta-\alpha)\hat s$ lies on this plane if and only if
 $$\dvec{CC'}\cdot\dvec{AB}=0$$
 or equivalently
 $$(|AC|\hat s-\dvec{AC})\cdot(\hat u+(\beta-\alpha)\hat s)=0,$$
 which simplifies to
 $$\dvec{AC}\cdot\hat u=(\beta-\alpha)(|AC|-\dvec{AC}\cdot\hat s).$$
 Since $C$ does not lie along the closed ray from $A$ in direction $\hat s$, $|AC|\neq\dvec{AC}\cdot\hat s$, so solving for $\beta$ shows that (\ref{eq:beta}) is the unique solution as claimed.
\end{proof}

\begin{lemma}\label{lem:d}
 Given a bridge frame $\mathcal F$ and $\alpha,\gamma>0$, let $\beta$ be as in Lemma \ref{lem:b}. Then set $B'=C+\hat v-\ell\hat t$ where $\ell=\alpha-\beta+|AC|$. If $\dvec{BB'}\cdot\hat t\neq0$, then there is a unique value of $\delta$ such that $D=Z-\delta\hat t$ is equidistant from $B$ and $B'$ given by
 \begin{equation}\label{eq:delta}
  \delta=\gamma+\frac{\dvec{BB'}\cdot\hat v}{\dvec{BB'}\cdot\hat t}=\gamma+\frac{\dvec{BC}\cdot\hat v+1}{\dvec{BC}\cdot\hat t-\ell}.
 \end{equation}
\end{lemma}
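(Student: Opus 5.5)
The plan is to mirror the proof of Lemma \ref{lem:b}: describe the perpendicular bisector plane of $B$ and $B'$, then intersect it with the line $\{Z-\delta\hat t:\delta\in\R\}$. The hypothesis $\dvec{BB'}\cdot\hat t\neq0$ guarantees $B\neq B'$, so the set of points equidistant from $B$ and $B'$ is a genuine plane. Its normal vector is $\dvec{BB'}$, and it passes through the midpoint $M=\tfrac12(B+B')$.

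The point $D=Z-\delta\hat t$ lies on this plane if and only if $\dvec{BB'}\cdot(D-M)=0$. Equivalently, $|D-B|^2=|D-B'|^2$, which is linear in $\delta$: the quadratic terms $\delta^2$ cancel, leaving $2\delta\,\dvec{BB'}\cdot\hat t$ plus terms independent of $\delta$. Since the coefficient $\dvec{BB'}\cdot\hat t$ is nonzero, there is exactly one solution $\delta$. This settles existence and uniqueness.

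To obtain the explicit first form in (\ref{eq:delta}), I will write $D=C+\hat v+(\gamma-\delta)\hat t$, using $Z=Y+\hat v$ and $C=Y-\gamma\hat t$. I then need $\dvec{BB'}\cdot\dvec{\,\cdot\,}$ evaluated against the midpoint to cooperate, and the key step is to check that
$$|CB|=|CB'|.$$
Here $|CB'|^2=|\hat v-\ell\hat t|^2=1+\ell^2$, because $\hat v\perp\hat t$ and both are unit vectors.

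On the other side, $|BC|^2=1+\ell^2$ is exactly the second condition of (\ref{eq:zee-bridge}). I will derive it from Lemma \ref{lem:b}. That lemma gives $|BC|=|BC'|$ with $C'=A+|AC|\hat s$. Also $B=A+\hat u+(\beta-\alpha)\hat s$, so
$$\dvec{BC'}=-\hat u+(|AC|-\beta+\alpha)\hat s=-\hat u+\ell\hat s.$$
Since $\hat u\perp\hat s$ and both are unit vectors, $|BC'|^2=1+\ell^2$, and hence $|BC|=|CB'|$.

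With $C$ equidistant from $B$ and $B'$, the condition on $D$ becomes $\dvec{BB'}\cdot(D-C)=0$, that is,
$$\dvec{BB'}\cdot\bigl(\hat v+(\gamma-\delta)\hat t\bigr)=0.$$
Solving gives $\delta=\gamma+\dvec{BB'}\cdot\hat v/\dvec{BB'}\cdot\hat t$. For the second form I expand $\dvec{BB'}=\dvec{BC}+\hat v-\ell\hat t$. This gives $\dvec{BB'}\cdot\hat v=\dvec{BC}\cdot\hat v+1$ and $\dvec{BB'}\cdot\hat t=\dvec{BC}\cdot\hat t-\ell$, again using $\hat v\perp\hat t$ and unit length.

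The main obstacle is the identity $|BC|=|CB'|$. It relies on the defining property of $\beta$ from Lemma \ref{lem:b}. In the degenerate case $C=C'$ that lemma gives no explicit formula for $\beta$, but the equidistance property still holds there (trivially), so the argument above goes through in both cases.
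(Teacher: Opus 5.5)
Your proof is correct and follows essentially the same route as the paper: the bisector plane of $B$ and $B'$ contains $C$, so you write $D=C+\hat v+(\gamma-\delta)\hat t$, dot with $\dvec{BB'}$, solve for $\delta$, and get the second form by expanding $\dvec{BB'}=\dvec{BC}+\hat v-\ell\hat t$. You also explicitly verify $|CB|=|CB'|=\sqrt{1+\ell^2}$ via $|BC|=|BC'|$ from Lemma \ref{lem:b} (including the degenerate case $C=C'$), which the paper only asserts holds ``by construction.''
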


%

\begin{proof}
 The points $B$ and $B'$ are equidistant from $C$ by construction, so the plane of points equidistant from $B$ and $B'$ contains $C$ and has normal vector $\dvec{BB'}$. Then $D=C+\hat v+(\gamma-\delta)\hat t$ lies on this plane if and only if
 $$\dvec{BB'}\cdot\dvec{CD}=0$$
 which simplifies to
 $$\dvec{BB'}\cdot(\hat v+(\gamma-\delta)\hat t)=0.$$
 Equivalently,
 $$\dvec{BB'}\cdot\hat v=(\dvec{BB'}\cdot\hat t)(\delta-\gamma).$$
 Solving for $\delta$ shows that (\ref{eq:delta}) is the unique solution; recognizing that $\dvec{BB'}=\dvec{BC}+\hat v-\ell\hat t$ and $\hat t\cdot\hat v=0$ gives the second form of the solution.
\end{proof}

Mirroring (\ref{eq:beta}) and (\ref{eq:delta}), for a bridge frame $\mathcal F=(W,X,Y,Z,\hat s,\hat t)$, we define
\begin{equation}\label{eq:bars}
 \begin{array}{rll}
 \bar\beta(\mathcal F,\alpha,\gamma)&= \alpha+\frac{\dvec{AC}\cdot \hat u}{|AC|-\dvec{AC}\cdot\hat s},& \quad (|AC|\neq\dvec{AC}\cdot\hat s)\\
 \bar\delta(\mathcal F,\alpha,\gamma)&= \gamma+\frac{\dvec{BC}\cdot\hat v+1}{\dvec{BC}\cdot\hat t-\ell},& \quad (\dvec{BC}\cdot\hat t\neq\ell)
 \end{array}
\end{equation}
where, as before, $A=W+\alpha\hat s$, $C=Y-\gamma\hat t$, $B=X+\bar\beta(\mathcal F,\alpha,\gamma)\hat s$, and $\ell=\alpha+|AC|-\bar\beta(\mathcal F,\alpha,\gamma)$.


If $\alpha,\gamma>0$ and $\beta=\bar\beta(\mathcal F,\alpha,\gamma)$ and $\delta=\bar\delta(\mathcal F,\alpha,\gamma)$ are defined and positive, then we say $\alpha,\gamma$ \emph{generate} positive parameters for $\mathcal F$, and we write $\mathcal Z_{\mathcal F,\alpha,\gamma}$ for the zee-bridge with parameters $\alpha,\beta,\gamma,\delta$. In this case, we also define the \emph{discriminant} of $\mathcal Z_{\mathcal F,\alpha,\gamma}$ to be
 \begin{equation}\label{eq:discriminant}
  \Delta_{\mathcal F,\alpha,\gamma}=\alpha-\beta+|AC|+\gamma-\delta.
 \end{equation}

\begin{theorem}\label{thm:ac-lemma}
 Suppose $\mathcal F$ is a bridge frame and $\alpha,\gamma$ generate positive parameters for $\mathcal F$. If $\Delta_{\mathcal F,\alpha,\gamma}>0$, then $\mathcal Z_{\mathcal F,\alpha,\gamma}$ is rectangular.
\end{theorem}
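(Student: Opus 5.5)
The plan is to verify the two equations of (\ref{eq:zee-bridge}) directly and then invoke Theorem \ref{prop:zee-bridge}. Throughout, write $\beta=\bar\beta(\mathcal F,\alpha,\gamma)$, $\delta=\bar\delta(\mathcal F,\alpha,\gamma)$ and $\ell=\alpha+|AC|-\beta$. The hypothesis that $\alpha,\gamma$ generate positive parameters gives two things. First, $\beta$ and $\delta$ are defined, so $|AC|\neq\dvec{AC}\cdot\hat s$, i.e.\ $C\neq C'$. Second, $\dvec{BC}\cdot\hat t\neq\ell$, which is the condition $\dvec{BB'}\cdot\hat t\neq 0$. So Lemmas \ref{lem:b} and \ref{lem:d} apply with these exact values, and all four parameters are positive, so $\mathcal Z_{\mathcal F,\alpha,\gamma}$ is a genuine zee-bridge.

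\textbf{Second equation.} By Lemma \ref{lem:b}, $|BC|=|BC'|$ with $C'=A+|AC|\hat s$. Writing $B=A+\hat u+(\beta-\alpha)\hat s$, we get
$$\dvec{BC'}=(|AC|+\alpha-\beta)\hat s-\hat u=\ell\hat s-\hat u.$$
Since $\hat u\perp\hat s$ and both are unit vectors, $|BC'|=\sqrt{1+\ell^2}$. Hence $|BC|=\sqrt{1+(|AC|+\alpha-\beta)^2}$, which is the second equation. No sign issue arises here because $\ell$ appears squared.

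\textbf{First equation.} By Lemma \ref{lem:d}, $|BD|=|B'D|$ with $B'=C+\hat v-\ell\hat t$. Writing $D=Z-\delta\hat t=C+\hat v+(\gamma-\delta)\hat t$, we get
$$\dvec{B'D}=(\ell+\gamma-\delta)\hat t=\Delta_{\mathcal F,\alpha,\gamma}\,\hat t,$$
so $|BD|=|\Delta_{\mathcal F,\alpha,\gamma}|$.

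This is the one place the hypothesis $\Delta>0$ is needed, and it is the main subtlety. It removes the absolute value, giving $|BD|=\alpha-\beta+|AC|+\gamma-\delta$. Rearranged, this is exactly $\alpha+|AC|+\gamma=\beta+|BD|+\delta$. If instead $\Delta<0$, the same computation would give $|BD|=-\Delta$, and the lengths would not fit the rectangular picture of Figure \ref{fig:s-geometric}.

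\textbf{Conclusion and a check.} With both equations of (\ref{eq:zee-bridge}) established, the backward direction of Theorem \ref{prop:zee-bridge} yields that $\mathcal Z_{\mathcal F,\alpha,\gamma}$ is rectangular. Before concluding, I would briefly confirm that the triangular faces $[A,B,C]$ and $[B,C,D]$ are nondegenerate, so they are strictly convex as the definition of a polyhedral surface requires. Their side lengths match those of triangles in the planar rectangle model of Figure \ref{fig:s-geometric}, whose width $\Delta$ along the top edge between $b$ and $d$ is positive. So the model triangles are nondegenerate, and the congruence in the proof of Theorem \ref{prop:zee-bridge} transfers this to the image.
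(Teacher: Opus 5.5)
Your proof is correct and follows essentially the same route as the paper: Lemma~\ref{lem:b} gives the second equation of (\ref{eq:zee-bridge}), and the identity $\dvec{B'D}=\Delta_{\mathcal F,\alpha,\gamma}\hat t$ together with Lemma~\ref{lem:d} gives $|BD|=|\Delta_{\mathcal F,\alpha,\gamma}|$, after which positivity removes the absolute value and Theorem~\ref{prop:zee-bridge} finishes. Your explicit computation $|BC'|=\sqrt{1+\ell^2}$ and your nondegeneracy check for the triangular faces are details the paper leaves implicit; note that for $[A,B,C]$ the check uses $|AC|>0$, which follows from $C\neq C'$.
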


\begin{proof}
 We start by noting that the second equation of (\ref{eq:zee-bridge}) is satisfied by Lemma \ref{lem:b}. And in the notation of Lemma \ref{lem:d}, since we have $\dvec{B'D}=(\gamma-\delta+\ell)\hat t$, 
 $$|BD|=|B'D|=|\gamma-\delta+(\alpha-\beta+|AC|)|=|\Delta_{\mathcal F,\alpha,\gamma}|.$$
 So if $\Delta_{\mathcal F,\alpha,\gamma}>0$, $|BD|=\Delta_{\mathcal F,\alpha,\gamma}$, so the first equation of (\ref{eq:zee-bridge}) is satisfied as well. Thus by Theorem \ref{prop:zee-bridge}, $\mathcal Z_{\mathcal F,\alpha,\gamma}$ is rectangular.
\end{proof}

To offer a geometric interpretation of this proof, note that if $\dvec{B'D}$ points in direction $\hat t$, then the discriminant is positive.

A special case that we'll use later on is covered below.

\begin{lemma}\label{lem:straight-implies-rect}
 Suppose $\mathcal F=(W,X,Y,Z,\hat s,\hat t)$ is a bridge frame with $\hat s=\hat t$ and that $\alpha,\gamma$ generate positive parameters for $\mathcal F$. Then $\mathcal Z_{\mathcal F,\alpha,\gamma}$ is rectangular.
\end{lemma}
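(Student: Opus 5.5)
The plan is to reduce to Theorem \ref{thm:ac-lemma}: we show that when $\hat s=\hat t$ the discriminant $\Delta_{\mathcal F,\alpha,\gamma}$ is automatically positive. The key tool is to compare $|BD|$ with the $\hat s$-component of $\dvec{BD}$, which is easy to compute because $\hat u\perp\hat s$ and $\hat v\perp\hat t=\hat s$.

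First I will record what the proof of Theorem \ref{thm:ac-lemma} already gives, without assuming positivity of $\Delta$. With $B'=C+\hat v-\ell\hat t$ as in Lemma \ref{lem:d}, we have $\dvec{B'D}=\Delta\,\hat t$. Since $\delta=\bar\delta(\mathcal F,\alpha,\gamma)$, the point $D$ is equidistant from $B$ and $B'$. Hence
$$|BD|=|B'D|=|\Delta|.$$
This holds for any bridge frame; only the sign of $\Delta$ is at stake.

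Next I will compute $\dvec{BD}\cdot\hat s$ using $\hat t=\hat s$.
\begin{itemize}
\item Since $B=A+\hat u+(\beta-\alpha)\hat s$ and $\hat u\cdot\hat s=0$, we get $\dvec{AB}\cdot\hat s=\beta-\alpha$.
\item Since $\hat v\cdot\hat s=0$, we get $\dvec{CB'}\cdot\hat s=-\ell=\beta-\alpha-|AC|$.
\item Chaining $B\to A\to C\to B'\to D$ gives
$$\dvec{BD}\cdot\hat s=-(\beta-\alpha)+\dvec{AC}\cdot\hat s+(\beta-\alpha-|AC|)+\Delta=\Delta-\epsilon,$$
where $\epsilon:=|AC|-\dvec{AC}\cdot\hat s$.
\end{itemize}
By Cauchy--Schwarz, $\epsilon\ge0$. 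The hypothesis that $\bar\beta(\mathcal F,\alpha,\gamma)$ is defined is exactly $\epsilon\neq0$, so $\epsilon>0$.

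Now suppose, for contradiction, that $\Delta\le0$. Then $\Delta-\epsilon<\Delta\le0$, so
$$|\dvec{BD}\cdot\hat s|=\epsilon-\Delta>-\Delta=|\Delta|=|BD|.$$
This contradicts $|\dvec{BD}\cdot\hat s|\le|BD|$. The same inequality also covers the degenerate case $\Delta=0$, since it forces $\dvec{BD}\cdot\hat s=-\epsilon\neq0$ while $|BD|=0$. Hence $\Delta_{\mathcal F,\alpha,\gamma}>0$, and Theorem \ref{thm:ac-lemma} gives rectangularity.

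The main obstacle I expect is the bookkeeping in the $\hat s$-component computation: signs of $\ell$, $\beta-\alpha$, and the orientation $C=Y-\gamma\hat t$ versus $D=C+\hat v+(\gamma-\delta)\hat t$. I would double-check it against the identity $\dvec{B'D}=(\gamma-\delta+\ell)\hat t$ used in Theorem \ref{thm:ac-lemma}. The remaining point is noticing that strictness comes precisely from the nondegeneracy condition $|AC|\ne\dvec{AC}\cdot\hat s$ built into ``generate positive parameters''.
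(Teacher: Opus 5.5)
Your proof is correct and is essentially the paper's argument. Both show that the $\hat t$-components of $\dvec{B'D}$ and $\dvec{BD}$ differ by $|AC|-\dvec{AC}\cdot\hat s>0$, then combine this with $|B'D|=|BD|$ to force $\dvec{B'D}=\Delta\hat t$ to point along $\hat t$, so that $\Delta>0$ and Theorem \ref{thm:ac-lemma} applies. You compute the difference by chaining through $A$, $C$, $B'$ rather than through the auxiliary point $C'$, and you justify $\epsilon>0$ from the definedness of $\bar\beta$, which the paper leaves implicit.
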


\begin{proof}
 In the notation of Lemma \ref{lem:d}, we find that since $\dvec{B'C}\cdot\hat t=\ell=\dvec{BC'}\cdot\hat s$,
 \begin{align*}
  \dvec{B'D}\cdot\hat t - \dvec{BD}\cdot\hat t &=\dvec{B'C}\cdot\hat t - \dvec{BC}\cdot\hat t\\
  &=\dvec{BC'}\cdot\hat s - \dvec{BC}\cdot\hat s\\
  &=\dvec{AC'}\cdot\hat s - \dvec{AC}\cdot\hat s\\
  &=|AC|-\dvec{AC}\cdot\hat s>0.\\
 \end{align*}%
 Then $\dvec{B'D}\cdot\hat t > \dvec{BD}\cdot\hat t$, so since $\left|\dvec{B'D}\cdot\hat t\right|=|BD|>\left|\dvec{BD}\cdot\hat t\right|$, this implies $\dvec{B'D}$ points in the direction of $\hat t$, so the discriminant is positive.
\end{proof}


%
%

\section{Tube Connections} \label{sec:tube-joint}

 Our next step is to construct what we'll call ``tube connections''---polyhedral transitions from a cylinder over one polygon to a cylinder over another.
 
\subsection{Tube Frames and Tube Connections}

  For an embedded planar polygon $\mathcal P$ in $\R^3$ with cyclically numbered vertices $P_0,\ldots,P_{k-1}$, we will write $\mathcal P=[P_0,\ldots,P_{k-1}]$. If $\hat s$ is a normal vector to the plane containing $\mathcal P$, we say that the pair $(\mathcal P,\hat s)$ is \emph{right-handed} if the vertices go counterclockwise around the interior of $\mathcal P$ when viewed from a point in direction $\hat s$ from the plane containing $\mathcal P$; otherwise we say $(\mathcal P,\hat s)$ is \emph{left-handed}.
 
 
 Suppose that $\mathcal P=[P_0,\ldots,P_{2n-1}]$ and $\mathcal Q=[Q_0,\ldots,Q_{2n-1}]$ are equilateral $2n$-gons of side length 1, and let $\hat s$ and $\hat t$ be normal vectors to the planes containing $\mathcal P$ and $\mathcal Q$ respectively. In this case, we call $\mathscr F=(\mathcal P,\mathcal Q,\hat s,\hat t)$ a \emph{tube frame}.
 
 Let $T$ be the combinatorial structure shown in Figure \ref{fig:j} so that $T$ has the topology of an annulus. Note that each horizontal strip $S_i$ is homeomorphic to the combinatorial structure $S$ from Figure \ref{fig:s} via extension of the vertex mapping
 $$\begin{array}{llll}
 w_i\mapsto w & y_{i-1}\mapsto x & y_i\mapsto y & w_{i-1}\mapsto z\\
 a_i\mapsto a & c_{i-1}\mapsto b & c_i\mapsto c & a_{i-1}\mapsto d.
 \end{array}$$
 
 \begin{figure}
  \centering
  \begin{tikzpicture}[scale=0.7]
   \def\w{13}
   \def\h{8}
   \def\wa{4.5}
   \def\wb{6}
   \def\wc{7}
   \def\we{9}
   
   \draw[fill=gray!20] (0,0) -- (\w,0) -- (\w,\h) -- (0,\h) -- (0,0);
   \foreach \i in {1,...,\h}
    {
     \draw (0,\i) -- (\w,\i);
    }
    
    \foreach \i in {2,4,...,\h}
     {
      \draw (\wc,\i-2) -- (\we,\i-1) -- (\wc,\i);
      \draw (\wc,\i-2) -- (\wb,\i-1) -- (\wc,\i);
      \draw (\wa,\i-2) -- (\wb,\i-1) -- (\wa,\i);
     }
   
   \draw[->] (0,8) -- (\wa/2,8);
   \draw[->] (0,0) -- (\wa/2,0);
   
   \draw[->] (0,8) -- ({(\wa+\wc)/2},8);
   \draw[->] (0,0) -- ({(\wa+\wc)/2},0);
   \draw[->] (0,8) -- ({(\wa+\wc)/2+0.1},8);
   \draw[->] (0,0) -- ({(\wa+\wc)/2+0.1},0);
   
   \draw[->] (0,8) -- ({(\w+\wc)/2},8);
   \draw[->] (0,0) -- ({(\w+\wc)/2},0);
   \draw[->] (0,8) -- ({(\w+\wc)/2+0.1},8);
   \draw[->] (0,0) -- ({(\w+\wc)/2+0.1},0);
   \draw[->] (0,8) -- ({(\w+\wc)/2-0.1},8);
   \draw[->] (0,0) -- ({(\w+\wc)/2-0.1},0);
    
   \draw (0,0) node[anchor=east] {$w_0$};
   \draw (0,1) node[anchor=east] {$y_1$};
   \draw (0,2) node[anchor=east] {$w_2$};
   \draw (0,3) node[anchor=east] {$y_3$};
   \draw (0,4) node[anchor=east] {$w_4$};
   \draw (0,5) node[anchor=east] {$\vdots$};
   \draw (0,8) node[anchor=east] {$w_{2n}$};
   
   \draw (\w,0) node[anchor=west] {$y_0$};
   \draw (\w,1) node[anchor=west] {$w_1$};
   \draw (\w,2) node[anchor=west] {$y_2$};
   \draw (\w,3) node[anchor=west] {$w_3$};
   \draw (\w,4) node[anchor=west] {$y_4$};
   \draw (\w,5) node[anchor=west] {$\vdots$};
   \draw (\w,8) node[anchor=west] {$y_{2n}$};
   
   \draw (\we,1) node[anchor=south west] {$a_1$};
   \draw (\we,3) node[anchor=south west] {$a_3$};
   \draw (\we,5) node[anchor=south west] {$\vdots$};

   \draw (\wa,0) node[anchor=north] {$a_0$};
   \draw (\wa,2) node[anchor=south east] {$a_2$};
   \draw (\wa,4) node[anchor=south east] {$a_4$};
   \draw (\wa,6) node[anchor=south east] {$\vdots$};
   \draw (\wa,8) node[anchor=south] {$a_{2n}$};
   
   \draw (\wb,1) node[anchor=south] {$c_1$};
   \draw (\wb,3) node[anchor=south] {$c_3$};
   \draw (\wb,5) node[anchor=south] {$\vdots$};

   \draw (\wc,0) node[anchor=north] {$c_0$};
   \draw (\wc,2) node[anchor=south] {$c_2$};
   \draw (\wc,4) node[anchor=south] {$c_4$};
   \draw (\wc,6) node[anchor=south] {$\vdots$};
   \draw (\wc,8) node[anchor=south] {$c_{2n}$};
   
   \draw (-1.25,0.5) node {$S_1$};
   \draw (-1.25,1.5) node {$S_2$};
   \draw (-1.25,2.5) node {$S_3$};
   \draw (-1.25,3.5) node {$\vdots$};
   \draw (-1.25,7.5) node {$S_{2n}$};
   
  \end{tikzpicture}
  \caption{The combinatorial polyhedral structure $T$ with edge identifications indicated by arrowheads.}\label{fig:j}
 \end{figure}

 Define bridge frames $\mathcal F_i$ via
 \begin{equation}\label{eq:fi}
  \mathcal F_i=\begin{cases} (P_i,P_{i-1},Q_i,Q_{i-1},\hat s,\hat t)& \text{$i$ even} \\ (Q_i,Q_{i-1},P_i,P_{i-1},-\hat t,-\hat s)& \text{$i$ odd} \end{cases}.
 \end{equation}
 Note that the convention is that the even numbered frames go from $\mathcal P$ to $\mathcal Q$ (left to right in Figure \ref{fig:j}) and odd numbered bridge frames go from $\mathcal Q$ to $\mathcal P$ (right to left).
 
 A \emph{tube connection} for $\mathscr F$ is a polyhedral surface with boundary $\mathcal C=(T,\Phi^0)$ with combinatorial structure $T$ so that if $\phi^0_i$ is the restriction of $\Phi^0$ to the vertices of $S_i$, then the polyhedral surface $\mathcal Z_i=(S_i,\phi^0_i)$ is a zee-bridge for $\mathcal F_i$. In this way, the extended realization map $\Phi$ sends the left edge of $T$ in Figure \ref{fig:j} to $\mathcal P$ and the right edge to $\mathcal Q$. Letting $\alpha_i=|\Phi(w_i)\Phi(a_i)|$ and $\gamma_i=|\Phi(y_i)\Phi(c_i)|$, note that $(\alpha_{i-1},\gamma_{i},\gamma_{i-1},\alpha_{i})$ are the parameters for $\mathcal Z_i$.
 
 \begin{figure}
  \centering
  \includegraphics[width=3in]{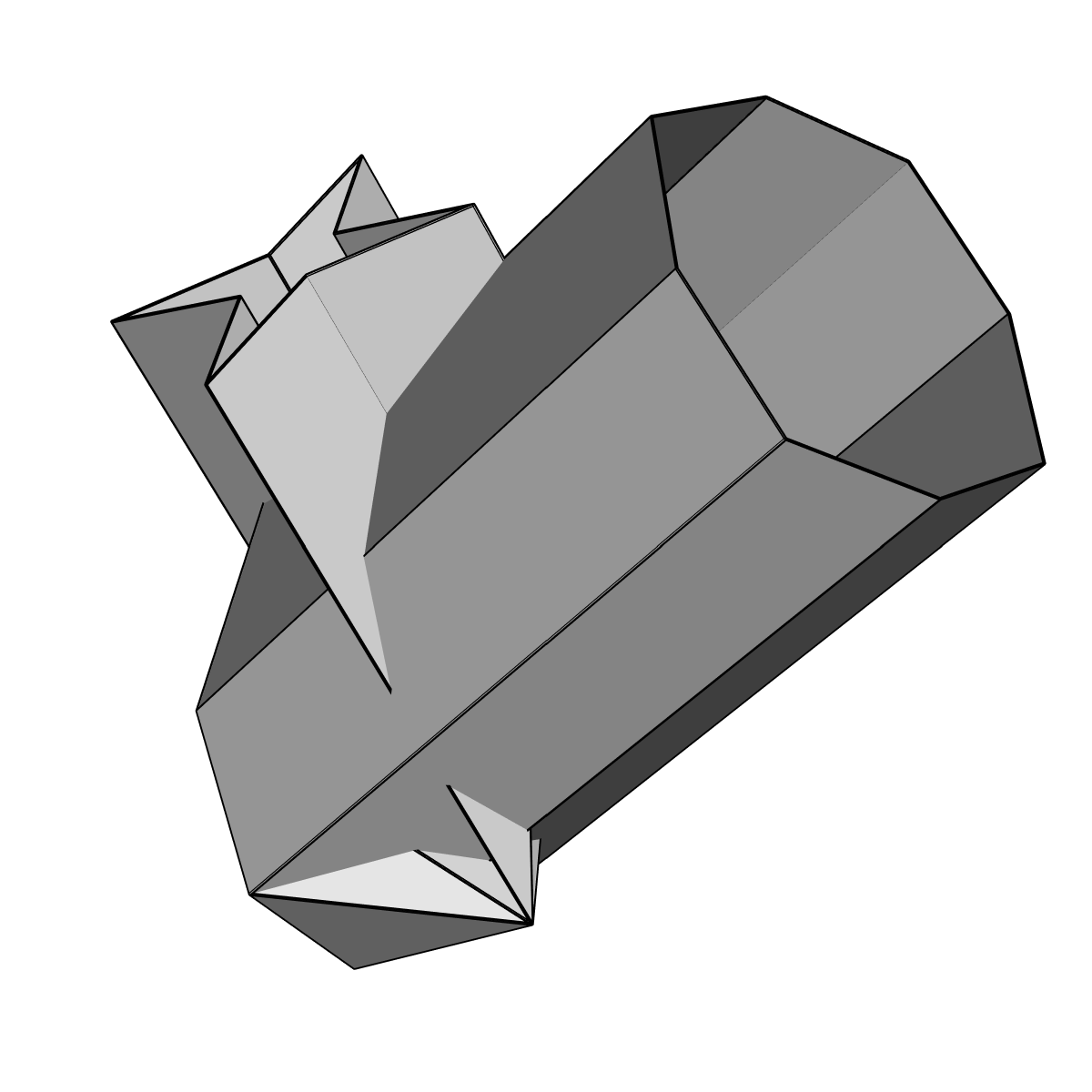}
  \caption{The image of a tube connection (with self-intersections). The two octagons $\mathcal P$ and $\mathcal Q$ are visible, but many of the vertices $\Phi(a_i),\Phi(c_i)$ are obscured.}
 \end{figure}
 
 For a given tube frame $\mathscr F$, ordered tuples of positive real numbers $\vec{\alpha}=[\alpha_0,\ldots,\alpha_{2n-1}]$ and $\vec{\gamma}=[\gamma_0,\ldots,\gamma_{2n-1}]$ determine a tube connection $\mathcal C_{\mathscr F,\vec{\alpha},\vec{\gamma}}$ for $\mathscr F$. In general, we call $\vec\alpha,\vec\gamma$ the \emph{parameters} for this tube connection.
 
 \begin{lemma}\label{lem:rectangluar-zee-bridge-to-flat-tubes}
  If $\mathcal C$ is a tube connection whose restriction $\mathcal Z_i$ to each $S_i$ is a rectangular zee-bridge, then $\mathcal C$ is flat and the angle sum at each boundary vertex is $\pi$.
 \end{lemma}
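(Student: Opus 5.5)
Because angle sums are additive over the faces meeting at a vertex, the proof reduces to bookkeeping: decide which strips $S_i$ contain a given vertex of $T$, and add up the angles each rectangular zee-bridge $\mathcal Z_i$ contributes there. The input comes from the definition of rectangular and Figure \ref{fig:s-geometric}. The induced length structure on $S$ is a planar rectangle whose corners are $w,x,y,z$, with $a,c$ on the edge $wy$ and $b,d$ on the edge $xz$. Consequently, in a rectangular zee-bridge the angle sum is $\pi/2$ at each of $w,x,y,z$ and $\pi$ at each of $a,b,c,d$. The corners $W,X,Y,Z$ are already right angles by construction, so only the values at $a,b,c,d$ come from rectangularity.

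The first step is to read off from Figure \ref{fig:j} and the vertex mapping which strips contain each vertex. Apart from the boundary rows ($i=0$ and $i=2n$, identified with each other), every horizontal line of $T$ is shared by exactly two strips, $S_i$ and $S_{i+1}$. In $S_i$ its vertices play the roles $w_i\mapsto w$, $y_i\mapsto y$, $a_i\mapsto a$, $c_i\mapsto c$. In $S_{i+1}$ they play $y_i\mapsto x$, $w_i\mapsto z$, $c_i\mapsto b$, $a_i\mapsto d$.

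The vertex types then work out as follows.
\begin{itemize}
\item \emph{Boundary vertices.} The vertices $w_i,y_i$ lie on the left and right edges of $T$. Each is a corner of exactly two strips, receiving $\pi/2$ from each, for a total of $\pi$. This is the boundary claim.
\item \emph{Interior vertices.} The vertices $a_i,c_i$ are interior to $T$. Each lies in exactly two strips and is one of $a,b,c,d$ in each, receiving $\pi$ from each, for a total of $2\pi$. So $\mathcal C$ is flat.
\end{itemize}

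The step needing the most care is the top/bottom identification. By the arrowheads, $w_{2n}=w_0$, $a_{2n}=a_0$, $c_{2n}=c_0$, $y_{2n}=y_0$, so $S_{2n}$ and $S_1$ share this row, and the indices must be read modulo $2n$. I would check that the vertex mapping for $S_1$ (using $w_0,y_0,a_0,c_0$ as $z,x,d,b$) and for $S_{2n}$ (using $w_{2n},y_{2n},a_{2n},c_{2n}$ as $w,y,a,c$) is consistent with the arrowheads. I would also confirm that no $a_i$ or $c_i$ appears in a third face outside these two strips. Once this is done, the count above applies uniformly.

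Finally, it is worth noting that the odd/even alternation of the frames $\mathcal F_i$ plays no role in this argument. Only the combinatorics and the rectangularity of each $\mathcal Z_i$ are used.
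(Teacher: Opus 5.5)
Your proposal is correct and is essentially the paper's argument. Each horizontal line of $T$ (indices read mod $2n$) is shared by $S_i$ and $S_{i+1}$, so $a_i,c_i$ collect $\pi+\pi=2\pi$ from the two rectangular zee-bridges, and $w_i,y_i$ collect $\frac{\pi}{2}+\frac{\pi}{2}=\pi$ from the right-angled trapezoid corners. Your explicit role assignments and your check of the wrap-around row $0\equiv 2n$ make precise what the paper leaves implicit.
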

 
 \begin{proof}
  Since the restrictions to $S_i$ and $S_{i+1}$ are rectangular, the angle sums above and below their horizontal line of intersection at $a_i$ and $c_i$ are each $\pi$, so the total angle sum is $2\pi$. Similarly, since the angles above and below the horizontal line of intersection at $w_i$ and $y_i$ are each $\frac{\pi}{2}$, the total angle sum is $\pi$.
 \end{proof}
 
\subsection{$n$-Stars}
 
 While much of what follows works for more general polygons, we'll focus on a class of polygons we call ``$n$-stars''.
 
  \begin{definition}
    An \emph{$n$-star} is a planar equilateral $2n$-gon in $\R^3$ with side length 1 whose interior angles alternate between two values.
  \end{definition}
  
 \begin{figure}
  \centering
  \begin{tikzpicture}[scale=0.6]
   \def\n{7}
   \def\p{5}
   \def\a{{sin(\p/2 r)/sin(pi/\n r)}}
   \def\b{{sin((pi-pi/\n-\p/2) r)/sin(pi/\n r)}}
   
   \foreach \i in {1,2,...,\n}
    {
      \draw ({\i*360/\n}:\b) -- ({(\i+0.5)*360/\n}:\a) -- ({(\i+1)*360/\n}:\b);
      \fill ({\i*360/\n}:\b) circle[radius=1pt];
      \fill ({(\i+0.5)*360/\n}:\a) circle[radius=1pt];
    }
    
   \begin{scope}[shift={(4,0)}]
   \def\n{5}
   \def\p{2.15}
   \def\a{{sin(\p/2 r)/sin(pi/\n r)}}
   \def\b{{sin((pi-pi/\n-\p/2) r)/sin(pi/\n r)}}
   
   \foreach \i in {1,2,...,\n}
    {
      \draw ({\i*360/\n}:\b) -- ({(\i+0.5)*360/\n}:\a) -- ({(\i+1)*360/\n}:\b);
      \fill ({\i*360/\n}:\b) circle[radius=1pt];
      \fill ({(\i+0.5)*360/\n}:\a) circle[radius=1pt];
    }
    
    \end{scope}
    
    \begin{scope}[shift={(12,0)}]
   \def\n{6}
   \def\p{pi}
   \def\a{{sin(\p/2 r)/sin(pi/\n r)}}
   \def\b{{sin((pi-pi/\n-\p/2) r)/sin(pi/\n r)}}
   
   \foreach \i in {1,2,...,\n}
    {
      \draw ({\i*360/\n}:\b) -- ({(\i+0.5)*360/\n}:\a) -- ({(\i+1)*360/\n}:\b);
      \fill ({\i*360/\n}:\b) circle[radius=1pt];
      \fill ({(\i+0.5)*360/\n}:\a) circle[radius=1pt];
    }
    
    \end{scope}
    
    \begin{scope}[shift={(16,0)}]
   \def\n{2}
   \def\p{2}
   \def\a{{sin(\p/2 r)/sin(pi/\n r)}}
   \def\b{{sin((pi-pi/\n-\p/2) r)/sin(pi/\n r)}}
   
   \foreach \i in {1,2,...,\n}
    {
      \draw ({\i*360/\n}:\b) -- ({(\i+0.5)*360/\n}:\a) -- ({(\i+1)*360/\n}:\b);
      \fill ({\i*360/\n}:\b) circle[radius=1pt];
      \fill ({(\i+0.5)*360/\n}:\a) circle[radius=1pt];
    }
    
    \end{scope}
    
    \begin{scope}[shift={(8,0)}]
   \def\n{3}
   \def\p{0.2}
   \def\a{{sin(\p/2 r)/sin(pi/\n r)}}
   \def\b{{sin((pi-pi/\n-\p/2) r)/sin(pi/\n r)}}
   
   \foreach \i in {1,2,...,\n}
    {
      \draw ({\i*360/\n}:\b) -- ({(\i+0.5)*360/\n}:\a) -- ({(\i+1)*360/\n}:\b);
      \fill ({\i*360/\n}:\b) circle[radius=1pt];
      \fill ({(\i+0.5)*360/\n}:\a) circle[radius=1pt];
    }
    
    \end{scope}
    
  \end{tikzpicture}~

  \caption{Some $n$-stars.}
  
  \end{figure}
 
 Since the sum of the interior angles of an embedded $2n$-gon must be $2\pi(n-1)$, the two interior angles of an $n$-star must be $\phi$ and $\left(\frac{2\pi(n-1)}{n}\right)-\phi$ for some $0<\phi<\frac{2\pi(n-1)}{n}$. Note that when $\phi=\pi$, the $n$-star is actually a regular $n$-gon with side length 2 with a vertex at the midpoint of each edge.
 
  
  
  Every $n$-star $\mathcal P=[P_0,\ldots,P_{2n-1}]$ has a unique \emph{center} $\tilde{\mathcal P}$ that is equidistant from all of the even numbered vertices and equidistant from all of the odd numbered vertices. Note that the angle between $\overrightarrow{\tilde{\mathcal P} P_i}$ and $\overrightarrow{\tilde{\mathcal P} P_{i+1}}$ is $\frac{\pi}{n}$. The line through $\tilde{\mathcal P}$ normal to the plane containing $\mathcal P$ is called the \emph{axis} of $\mathcal P$.
  
 \begin{definition}
  If $n$ is even and $\mathcal P=[P_0,\ldots,P_{2n-1}]$ and $\mathcal Q=[Q_0,\ldots,Q_{2n-1}]$ are $n$-stars, we say $(\mathcal P, \mathcal Q,\hat s, \hat t)$ is an \emph{aligned $n$-star tube frame} when the axes of $\mathcal P$ and $\mathcal Q$ intersect, and there is a plane containing both axes and the points $P_0$ and $Q_0$.
 \end{definition}
 
 Aligned $n$-star tube frames have some special symmetry properties that will allow us to generate flat tube connections from a small number of parameters. 
 
 \subsection{Generating Flat Tube Connections}\label{sec:generating}
  
 Let $\mathscr F$ be an aligned $n$-star tube frame, and define the bridge frames $\mathcal F_i$ as in (\ref{eq:fi}). We seek conditions under which a tube connection $\mathcal C_{\mathscr{F},\vec\alpha,\vec\gamma}$ will be flat. The basic idea is that if we know the parameters $\alpha_k,\gamma_k$ for some fixed $k$, we can apply the functions $\bar\beta,\bar\delta$ from (\ref{eq:bars}) to $\mathcal F_{k+1}$ to find values for $\alpha_{k+1},\gamma_{k+1}$ that will make the restriction $\mathcal Z_i$ rectangular if the discriminant is positive. By applying this procedure recursively and leveraging the symmetry of the tube frame, we can ensure that each of the zee-bridges $\mathcal Z_i$ is rectangular and apply Lemma \ref{lem:rectangluar-zee-bridge-to-flat-tubes}.
 
 Specifically, suppose $\alpha,\gamma>0$ are fixed and $\mathscr F$ is an aligned $n$-star tube frame. Let $\alpha_{0}=\alpha$ and $\gamma_{0}=\gamma$. Then for $i=0,\ldots,n-1$, recursively set
 \begin{align*}
  \alpha_{i+1}&=\bar\delta(\mathcal F_{i+1},\alpha_{i},\gamma_{i})\\
  \gamma_{i+1}&=\bar\beta(\mathcal F_{i+1},\alpha_{i},\gamma_{i}),
 \end{align*}
 and for $i=1,\ldots,n-1$, set
 $$\alpha_{2n-i}=\alpha_{i} \qquad \gamma_{2n-i}=\gamma_{i}.$$
 Then we say $\vec \alpha=[\alpha_0,\ldots,\alpha_{2n-1}]$ and $\vec \gamma = [\gamma_0,\ldots,\gamma_{2n-1}]$ are the parameter sets for $\mathscr F$ \emph{generated by} $\alpha$ and $\gamma$. 
 
 \begin{theorem}\label{thm:aligned-tube-joint-consistency}
  Suppose $\mathscr F$ is an aligned $n$-star tube frame and suppose $\alpha,\gamma$ generate positive parameter sets $\vec \alpha,\vec \gamma$ for $\mathscr F$. Let $\mathcal C=(T,\Phi^0)=\mathcal C_{\mathscr F,\vec\alpha,\vec\gamma}$.  If for all $i=1,\ldots,n$,
  $$\Delta_i:=\alpha_{i-1}-\gamma_{i}+|A_{i-1}C_{i-1}|+\gamma_{i-1}-\alpha_{i}>0$$
  where $A_i=\Phi^0(a_i)$ and $C_i=\Phi^0(c_i)$, then $\mathcal C$ is flat.
 \end{theorem}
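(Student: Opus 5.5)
The plan is to show that every restricted zee-bridge $\mathcal Z_i$, $i=1,\ldots,2n$, is rectangular, and then invoke Lemma \ref{lem:rectangluar-zee-bridge-to-flat-tubes}. The indices split into two halves. For $1\le i\le n$ I would use Theorem \ref{thm:ac-lemma} directly. For $n+1\le i\le 2n$ I would use the reflection symmetry of the aligned tube frame to transfer rectangularity from the first half.

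\emph{First half.} By definition of a tube connection, $\mathcal Z_i$ is the zee-bridge for $\mathcal F_i$ with parameters $(\alpha_{i-1},\gamma_i,\gamma_{i-1},\alpha_i)$. The recursion sets $\gamma_i=\bar\beta(\mathcal F_i,\alpha_{i-1},\gamma_{i-1})$ and $\alpha_i=\bar\delta(\mathcal F_i,\alpha_{i-1},\gamma_{i-1})$. Hence $\mathcal Z_i=\mathcal Z_{\mathcal F_i,\alpha_{i-1},\gamma_{i-1}}$, and positivity of these parameters is part of the hypothesis. The points $A$ and $C$ of this zee-bridge are $\Phi^0(a_{i-1})=A_{i-1}$ and $\Phi^0(c_{i-1})=C_{i-1}$. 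Substituting into (\ref{eq:discriminant}) shows that the discriminant of $\mathcal Z_i$ is exactly $\Delta_i$. Since $\Delta_i>0$, Theorem \ref{thm:ac-lemma} gives that $\mathcal Z_i$ is rectangular for $i=1,\ldots,n$.

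\emph{Second half.} Let $\sigma$ be the reflection across the plane $\Pi$ that contains both axes and the points $P_0,Q_0$.
\begin{itemize}
\item Each $n$-star is symmetric under reflection in any plane containing its axis and a vertex, so $\sigma$ maps $P_j\mapsto P_{-j}$ and $Q_j\mapsto Q_{-j}$, with indices taken mod $2n$.
\item Since $\Pi$ contains the axis directions, $\sigma$ fixes $\hat s$ and $\hat t$.
\end{itemize}
For even $i$, this gives $\sigma(\mathcal F_i)=(P_{-i},P_{1-i},Q_{-i},Q_{1-i},\hat s,\hat t)$. For odd $2n+1-i$, the reversed frame $\tilde{\mathcal F}_{2n+1-i}$ is also $(P_{-i},P_{1-i},Q_{-i},Q_{1-i},\hat s,\hat t)$, so the two agree. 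The odd-$i$ case is analogous. I expect this index and orientation bookkeeping to be the main place where errors could creep in, so I would check it carefully, including the wrap-around cases $i=1\leftrightarrow 2n$ via $\alpha_{2n}=\alpha_0$.

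By the remark after the definition of $\tilde{\mathcal F}$, the image $\sigma(\mathcal Z_i)$ is the zee-bridge for $\mathcal F_{2n+1-i}$ with parameters $(\alpha_i,\gamma_{i-1},\gamma_i,\alpha_{i-1})$. The symmetric definition $\alpha_{2n-j}=\alpha_j$, $\gamma_{2n-j}=\gamma_j$ shows these are exactly the parameters of $\mathcal Z_{2n+1-i}$. Therefore $\mathcal Z_{2n+1-i}=\sigma(\mathcal Z_i)$, with the combinatorial correspondence between $S_i$ and $S_{2n+1-i}$ respected. Since $\sigma$ is an isometry, angle sums are preserved. So $\mathcal Z_{2n+1-i}$ is rectangular for $i=1,\ldots,n$, which covers $n+1,\ldots,2n$.

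With all $2n$ zee-bridges rectangular, Lemma \ref{lem:rectangluar-zee-bridge-to-flat-tubes} shows that $\mathcal C$ is flat.
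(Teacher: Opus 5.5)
Your proposal is correct and follows the paper's proof essentially step for step. You apply Theorem~\ref{thm:ac-lemma} to $\mathcal Z_1,\ldots,\mathcal Z_n$ by identifying $\Delta_i$ as the discriminant, use the reflection across the plane containing both axes and $P_0,Q_0$ to get $\sigma(\mathcal F_i)=\tilde{\mathcal F}_{2n+1-i}$ (the paper's $\rho(\mathcal F_{i+1})=\tilde{\mathcal F}_{2n-i}$), and conclude with Lemma~\ref{lem:rectangluar-zee-bridge-to-flat-tubes}; your explicit check that the reflected bridge is exactly $\mathcal Z_{2n+1-i}$, via $\alpha_{2n-j}=\alpha_j$, $\gamma_{2n-j}=\gamma_j$ and the parameter reversal under $\mathcal F\mapsto\tilde{\mathcal F}$, fills in a step the paper leaves implicit.
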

 
 \begin{proof}  
  Note that for $i=1,\ldots,n$, the restriction $\mathcal Z_i$ of $\mathcal C$ to $S_{i}$ is generated by $\alpha_{i-1},\gamma_{i-1}$ for the bridge frame $\mathcal F_{i}$. Furthermore $\Delta_i$ is defined so that it is the discriminant of $\mathcal Z_{i}$, and thus if $\Delta_i>0$, then $\mathcal Z_{i}$ is rectangular. This proves that each of $\mathcal Z_{1},\ldots,\mathcal Z_{n}$ is a rectangular zee-bridge.
  
  For the remaining zee-bridges, we will use symmetry. Let $\rho$ denote the affine reflection map across the plane containing the axes of $\mathcal P$ and $\mathcal Q$ and the points $P_{0}$ and $Q_{0}$. The restriction of $\rho$ to the plane containing $\mathcal P$ is then a reflection across the line through $P_{0}$ and $\tilde{\mathcal P}$, so it is clear that for any $i$, $\rho(P_{i})=P_{2n-i}$; similarly $\rho(Q_{i})=Q_{2n-i}$.
  
  This means that the bridge frame $\tilde{\mathcal F}_{2n-i}$ is exactly the image under $\rho$ of $\mathcal F_{i+1}$. Thus for $i=0,\ldots,n-1$, the zee-bridge for $\tilde{\mathcal F}_{2n-i}$ with the same parameters as $\mathcal Z_{i+1}$ will also be rectangular.
  
  Since the restriction of $\mathcal C$ to any $S_i$ is rectangular, by Lemma \ref{lem:rectangluar-zee-bridge-to-flat-tubes}, $\mathcal C$ is flat.
 \end{proof}
 
 We now specialize further to the types of aligned $n$-star tube frames that will be used to build the main construction.
 
 Let $n\geq 4$ be even, $L>0$, $0\leq \theta\leq \pi$, and $0<\psi<\frac{2\pi(n-1)}{n}$. Let $\mathcal P=[P_0,\ldots,P_{2n-1}]$ be the $n$-star centered around $(L,0,0)$ in the plane to which $\hat s=(-1,0,0)$ is normal so that $\dvec{\tilde{\mathcal P}P_{n/2}}$ has direction $\hat k=(0,0,1)$, $\angle P_0=\pi$ (so that $\mathcal P$ is an $n$-gon with side length 2), and $(\mathcal P,\hat s)$ is right handed. Let $\mathcal Q=[Q_0,\ldots,Q_{2n-1}]$ be an $n$-star centered around $(L\cos\theta,L\sin\theta,0)$ in the plane to which $\hat t=(\cos\theta,\sin\theta,0)$ is normal so that $\dvec{\tilde{\mathcal Q}Q_{n/2}}$ has direction $\hat k$, $\angle Q_0=\psi$, and $(\mathcal Q,\hat t)$ is left-handed. In this case we write $\mathscr J_{n,L,\theta,\psi}$ for the tube frame $(\mathcal P,\mathcal Q,\hat s,\hat t)$, which is an aligned $n$-star frame since the axes of $\mathcal P$ and $\mathcal Q$ intersect at the origin and lie in the $xy$-plane along with $P_{0}$ and $Q_{0}$.
  
 Under the same restrictions for $n,L,\psi$, let $\mathcal P$ be as above, and now let $\mathcal Q=[Q_0,\ldots,Q_{2n-1}]$ be the $n$-star centered around $(-L,0,0)$ in the plane to which $\hat s$ is normal so that $\dvec{\tilde QQ_{n/2}}$ points in direction $\hat k$, $\angle Q_0=\psi$, and $\mathcal Q$ is right-handed. In this case, we write $\mathscr S_{n,L,\psi}$ for the tube frame $(\mathcal P,\mathcal Q,\hat s,\hat s)$, which is also an aligned $n$-star tube frame for the same reason.
 
 Examples of tube frames of the forms $\mathscr J_{n,L,\theta,\psi}$ and $\mathscr S_{n,L,\psi}$ are shown in Figure \ref{fig:j-frame}.
 
    \begin{figure}
   \centering
   \begin{subfigure}[b]{0.45\textwidth}
    \centering
    \begin{overpic}[width=\textwidth,tics=10]{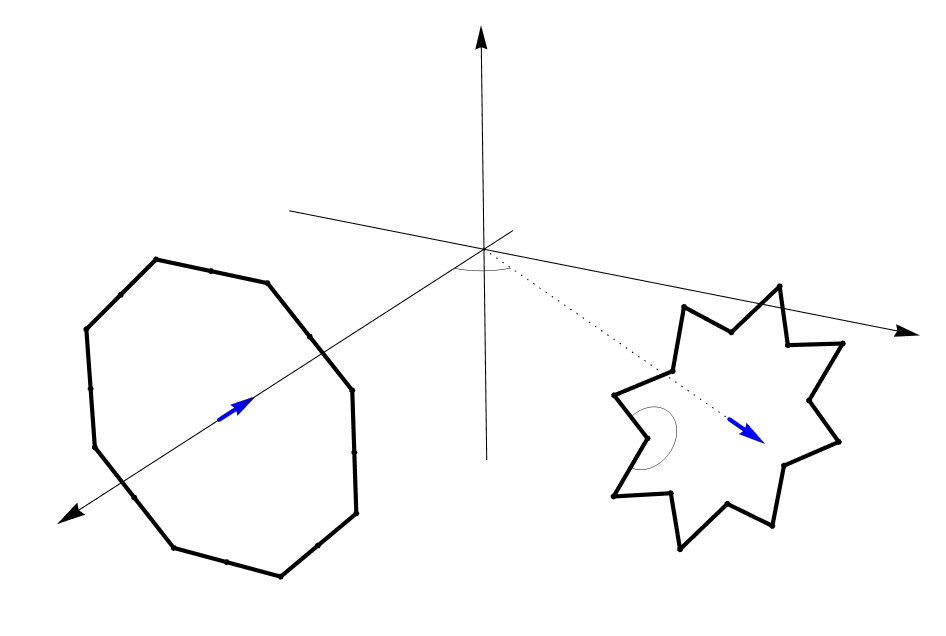}
 \put (3,24) {\small$P_0$}
 \put (21,40) {\small$P_{n/2}$}
 \put (39,18) {\small$P_{n}$}
 \put (18,4) {\small$P_{3n/2}$}
 \put (69,37) {\small$Q_{n/2}$}
 \put (60,20) {\small$Q_{0}$}
 \put (75,8) {\small$Q_{3n/2}$}
 \put (87,24) {\small$Q_n$}

 \put (23,25) {\small\color{blue}$\hat s$}
 \put (79,24) {\small\color{blue}$\hat t$}
 
 \put (52,35) {\small$\theta$}
 \put (72,20) {\small$\psi$}

\end{overpic}
    \caption{}
    \label{fig:joint}
  \end{subfigure}
  \hfill 
  \begin{subfigure}[b]{0.45\textwidth}
    \centering
    \begin{overpic}[width=0.8\textwidth,tics=10]{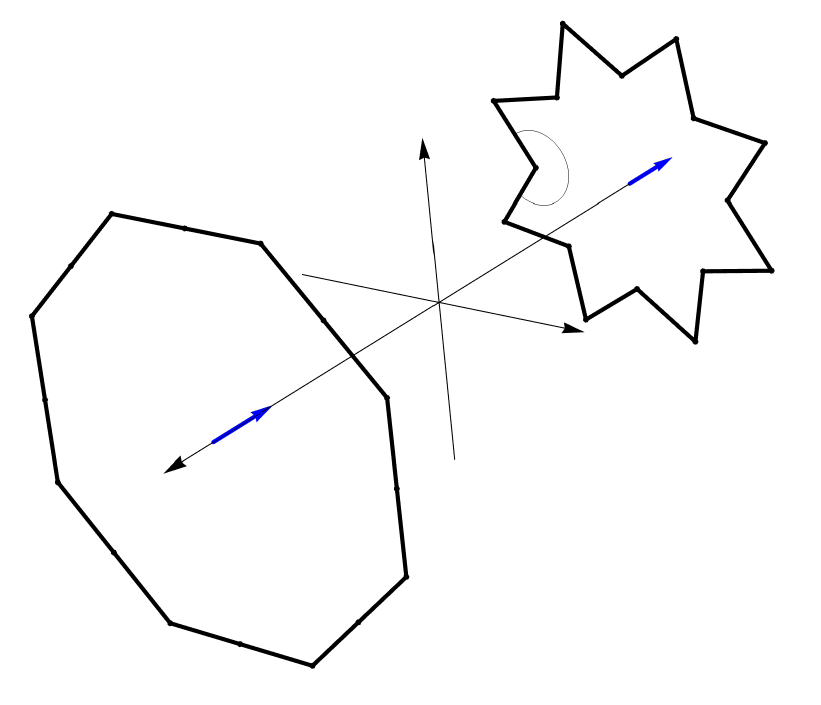}
 \put (-3,33) {\small$P_0$}
 \put (20,60) {\small$P_{n/2}$}
 \put (50,24) {\small$P_{n}$}
 \put (18,3) {\small$P_{3n/2}$}
 
 \put (71,83) {\small$Q_{n/2}$}
 \put (54,63) {\small$Q_{0}$}
 \put (72,42) {\small$Q_{3n/2}$}
 \put (92,58) {\small$Q_n$}

 \put (27,37) {\small\color{blue}$\hat s$}
 \put (78,68) {\small\color{blue}$\hat t$}
 
 \put (69,67) {\small$\psi$}

\end{overpic}
    \caption{}
    \label{fig:straight}
  \end{subfigure}
      
   \caption{(a) A tube frame of the form $\mathscr J_{n,L,\theta,\psi}$. (b) A tube frame of the form $\mathscr S_{n,L,\psi}$.}\label{fig:j-frame}
  \end{figure}
  
\subsection{Examples of Flat Tube Connections}
  
 Our plan is to glue together immersed flat tube connections over tube frames of the form $\mathscr J_{n,L,\theta,\psi}$ and $\mathscr S_{n,L,\psi}$. The following two theorems prove flatness for the specific cases we'll need.

 
 \begin{theorem}\label{thm:straight-flat}
  Suppose $n\geq4$ is even, $L>0$, and $\pi<\psi<\frac{2\pi(n-1)}{n}$, and let $\mathscr S=\mathscr S_{n,L,\psi}$.   
   Then any $\alpha,\gamma>0$ generate positive parameter sets $\vec \alpha,\vec \gamma$ for $\mathscr S$ and the tube connection $\mathcal S=\mathcal C_{\mathscr S,\vec\alpha,\vec\gamma}$ is flat with
  $$\alpha_1=\alpha_3=\cdots=\alpha_{2n-1}, \quad \gamma_0=\gamma_2=\cdots=\gamma_{2n-2}, \text{ and}$$
  $$\alpha_0=\gamma_1=\alpha_2=\gamma_3=\cdots=\gamma_{2n-1}.$$
 \end{theorem}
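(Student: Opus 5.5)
The plan is to reduce everything to Lemma \ref{lem:straight-implies-rect}, using the symmetry of $\mathscr S$. Both $\mathcal P$ and $\mathcal Q$ lie in planes normal to $\hat s$ and are centered on the $x$-axis. The angle between consecutive radii is $\frac{\pi}{n}$ for both, both have the index $n/2$ vertex in direction $\hat k$, and both are right-handed. So the projections of $P_i$ and $Q_i$ to the $yz$-plane lie on the same ray from the axis. Consequently the rotation $R$ about the $x$-axis by $\frac{2\pi}{n}$ sends $P_i\mapsto P_{i+2}$ and $Q_i\mapsto Q_{i+2}$, and fixes $\hat s$. From (\ref{eq:fi}) we get $R(\mathcal F_i)=\mathcal F_{i+2}$ for every $i$. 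Since $\bar\beta$ and $\bar\delta$ are defined from dot products and distances, they are invariant under this isometry:
$$\bar\beta(\mathcal F_{i+2},\cdot,\cdot)=\bar\beta(\mathcal F_i,\cdot,\cdot), \qquad \bar\delta(\mathcal F_{i+2},\cdot,\cdot)=\bar\delta(\mathcal F_i,\cdot,\cdot).$$
Moreover every $\mathcal F_i$ has equal normal vectors ($\hat s,\hat s$ or $-\hat s,-\hat s$). Hence, once positivity is known, Lemma \ref{lem:straight-implies-rect} makes every zee-bridge rectangular with no discriminant computation, and flatness follows from Theorem \ref{thm:aligned-tube-joint-consistency} (or directly from Lemma \ref{lem:rectangluar-zee-bridge-to-flat-tubes}).

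Given the rotational invariance, the stated periodic pattern reduces by induction to two base steps. Write $a:=\bar\delta(\mathcal F_1,\alpha_0,\gamma_0)=\alpha_1$. The base identities to prove are
$$\bar\beta(\mathcal F_1,\alpha_0,\gamma_0)=\alpha_0, \qquad \bar\delta(\mathcal F_2,a,\alpha_0)=\alpha_0, \qquad \bar\beta(\mathcal F_2,a,\alpha_0)=\gamma_0.$$
Once these hold, $(\alpha_2,\gamma_2)=(\alpha_0,\gamma_0)$. Since $\mathcal F_3=R(\mathcal F_1)$ and $\mathcal F_4=R(\mathcal F_2)$, the recursion then repeats with period two up to $i=n$. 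Because $n$ is even, $2n-i$ has the same parity as $i$, so the reflected definitions $\alpha_{2n-i}=\alpha_i$ and $\gamma_{2n-i}=\gamma_i$ agree with the pattern. Positivity reduces to showing $a>0$, since $\alpha_0,\gamma_0>0$ by hypothesis.

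For the second and third identities I would try to avoid computation. The reflection $\sigma$ across the plane through the $x$-axis and $P_1$ (which also contains $Q_1$) swaps $P_0\leftrightarrow P_2$ and $Q_0\leftrightarrow Q_2$. One checks that $\sigma(\tilde{\mathcal F}_1)$ coincides with $\mathcal F_2$ up to the orientation conventions. The rectangular zee-bridge $\mathcal Z_1$ is a zee-bridge for $\tilde{\mathcal F}_1$ with reversed parameters. Reflecting it gives a rectangular zee-bridge for $\mathcal F_2$ whose first and third parameters are $a$ and $\alpha_0$ (up to this bookkeeping). Its $B$ and $D$ then satisfy the equidistance characterizations of Lemmas \ref{lem:b} and \ref{lem:d}, so uniqueness there forces $\bar\beta$ and $\bar\delta$ to return the reflected values. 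Careful tracking of indices here should yield the last two identities.

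The main obstacle is the first identity together with the sign of $a$. By (\ref{eq:beta}), $\bar\beta(\mathcal F_1,\alpha_0,\gamma_0)=\alpha_0$ is equivalent to $\dvec{AC}\cdot\hat u=0$. Since $\hat u$ is in the $yz$-plane, this says the radial vector from $Q_1$ to $P_1$ is orthogonal to the edge $Q_0Q_1$. Here I would need to check the convention for which $\alpha$-value is attached to $W$ in $\mathcal F_1$, and possibly the zero-length convention in Lemma \ref{lem:b} when $C=C'$. I would first verify this by explicit coordinates of the two stars. If the direct orthogonality fails, I would instead verify the pattern via the second form of (\ref{eq:delta}), computing $\ell$ and $\dvec{BC}\cdot\hat t$ explicitly. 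For $a>0$, I expect the hypothesis $\psi>\pi$ to be what is needed: it makes the even vertices of $\mathcal Q$ reflex, which should fix the sign of $\dvec{BC}\cdot\hat v+1$ relative to $\dvec{BC}\cdot\hat t-\ell$. I would verify this by a direct coordinate computation in the $yz$-plane.
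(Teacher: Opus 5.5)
\textbf{There is a genuine gap.} Your symmetry scaffolding is sound. It matches what the paper compresses into its final paragraph: dihedral symmetry plus the uniqueness parts of Lemmas \ref{lem:b} and \ref{lem:d}. The rotation $R$ reduces the recursion to period two. Your reflection $\sigma$ with that uniqueness gives $(\alpha_2,\gamma_2)=(\alpha_0,\gamma_0)$, and it does not even need $\gamma_1=\alpha_0$. Lemma \ref{lem:straight-implies-rect} replaces any discriminant check.

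The problem is that the two facts carrying the actual content of the paper's proof, $\gamma_1=\alpha_0$ and $\alpha_1>0$, are only announced, and the first, as you set it up, is false. Reading (\ref{eq:fi}) literally gives $\mathcal F_1=(Q_1,Q_0,P_1,P_0,-\hat s,-\hat s)$, hence $\dvec{AC}\cdot\hat u=\dvec{Q_1P_1}\cdot\dvec{Q_1Q_0}$.
\begin{itemize}
\item The $yz$-projection of $\dvec{Q_1P_1}$ is a positive multiple of the outward radius at $Q_1$. The radius of $Q_1$ is $\sin\frac{\psi}{2}/\sin\frac{\pi}{n}$, strictly less than the radius $1/\sin\frac{\pi}{n}$ of $P_1$.
\item $\dvec{Q_0Q_1}$ makes the acute angle $\pi-\frac{\psi}{2}-\frac{\pi}{n}$ with that outward radius.
\end{itemize}
So $\dvec{AC}\cdot\hat u<0$ and $\bar\beta(\mathcal F_1,\alpha_0,\gamma_0)\neq\alpha_0$. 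Your fallback via (\ref{eq:delta}) cannot help, because (\ref{eq:beta}) alone determines $\gamma_1$.

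\textbf{The missing idea} is to place the zee-bridge on $S_1$ the way the combinatorics of $T$ dictates. Here $\alpha_0=|\Phi(w_0)\Phi(a_0)|$ with $\Phi(w_0)=P_0$, and $\gamma_0$ is measured from $\Phi(y_0)=Q_0$. So the relevant frame is $(P_0,P_1,Q_0,Q_1,\hat s,\hat s)$, i.e.\ $\tilde{\mathcal F}_1$ in the notation of (\ref{eq:fi}). This is the frame the paper's proof actually computes with. Replacing every $\mathcal F_i$ by $\tilde{\mathcal F}_i$ leaves your $R$ and $\sigma$ arguments intact, since both commute with the tilde operation.

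In this frame, $A=P_0+\alpha_0\hat s$ and $C=Q_0-\gamma_0\hat s$ lie in the $xy$-plane, while $\hat u=\dvec{P_0P_1}$ is vertical. Hence $\dvec{AC}\cdot\hat u=0$ and $\gamma_1=\alpha_0$. The orthogonality comes from $\angle P_0=\pi$ ($P_0$ is a side midpoint of the regular $n$-gon), not from any property of $\mathcal Q$. The denominator is nonzero because for $\psi>\pi$ the cylinder over $\mathcal Q$ lies strictly inside the one over $\mathcal P$, so $\dvec{AC}$ is not parallel to $\hat s$.

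\textbf{Positivity.} We have $\alpha_1-\gamma_0=\frac{\dvec{BB'}\cdot\hat v}{\dvec{BB'}\cdot\hat s}$ with $\hat v=\dvec{Q_0Q_1}$.
\begin{itemize}
\item The denominator is negative by the computation in the proof of Lemma \ref{lem:straight-implies-rect}: $\dvec{B'B}\cdot\hat s=|AC|-\dvec{AC}\cdot\hat s>0$.
\item The numerator is the $yz$-projection of $\dvec{P_1Q_1}$, a positive multiple of the inward radius at $Q_1$, dotted with $\dvec{Q_0Q_1}$. It is negative by the same acute angle.
\end{itemize}
Hence $\alpha_1>\gamma_0>0$. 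So $\psi>\pi$ enters through the strict radial containment of $\mathcal Q$ in $\mathcal P$, not through the reflex angles fixing a sign in the second form of (\ref{eq:delta}). With these two facts supplied, your argument closes.
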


%
%
%
 
 \begin{proof}
  We will start by describing the restriction $\mathcal Z_1$ of $\mathcal S$ to $S_1$, and then use symmetry to get the desired result.
  
  If $\mathscr S=(\mathcal P,\mathcal Q,\hat s,\hat s)$, using the construction from Section \ref{sec:generating}, let $\alpha_0=\alpha$, $\gamma_0=\gamma$, and
  $$\gamma_1=\bar\beta(\mathcal F_1,\alpha_0,\gamma_0)=\alpha_0+\frac{\dvec{AC}\cdot \hat u}{|AC|-\dvec{AC}\cdot(-\hat s)}$$
  where $A=P_0+\alpha_0\hat s$, $C=Q_0-\gamma_0\hat s$, and $\hat u=\dvec{P_0P_1}=(0,0,1)$. Since $\psi>\pi$, the right cylinder over $\mathcal Q$ lies entirely inside of the right cylinder over $\mathcal P$, so $\dvec{AC}$ is not parallel to $\hat s$. Thus the denominator is nonzero, and so $\gamma_1$ above is well-defined. Furthermore, $P_0$ and $Q_0$ lie in the $xy$-plane, so $\dvec{AC}\cdot\hat u=0$, and thus $\gamma_1=\alpha_0$.
  
  Also the construction has us set
  $$\alpha_1=\bar\delta(\mathcal F_1,\alpha_0,\gamma_0)=\gamma_0+\frac{\dvec{BB'}\cdot\hat v}{\dvec{BB'}\cdot(-\hat s)}$$
  where $B=P_1+\gamma_1\hat s$, $B'=Q_1-(\alpha_0-\gamma_1+|AC|+\gamma_0)\hat s$, and $\vec v=\dvec{Q_0Q_1}$.
%
  As in the proof of Lemma \ref{lem:straight-implies-rect}, the assumption that $\hat s=\hat t$ implies that 
  $$\dvec{B'B}\cdot\hat s=\dvec{B'D}\cdot\hat s - \dvec{BD}\cdot\hat s>0.$$
  Letting $\Pi$ be projection onto the $yz$-plane, to which $\hat s$ is normal and $\hat v$ is parallel, we have $\dvec{B'B}\cdot \hat v=\Pi\left(\dvec{B'B}\right)\cdot \hat v$. Note that $\Pi\left(\dvec{B'B}\right)=\Pi\left(\dvec{Q_1P_1}\right)$ since $B'$ and $B$ project to the same points as $Q_1$ and $P_1$ respectively. We can then find that the angle between $\hat v=\dvec{Q_0Q_1}$ and $\Pi\left(\dvec{Q_1P_1}\right)$ is $\pi-\frac{\psi}{2}-\frac{\pi}{n}<\frac{\pi}{2}$ as in Figure \ref{fig:coaxial}, so this dot product is negative, and thus $\alpha_1>\gamma_0>0$. This shows that $\alpha,\gamma$ generate positive parameters for $\mathcal F_1$, so by Lemma \ref{lem:straight-implies-rect}, $\mathcal Z_1$ is rectangular.
  
  \begin{figure}
  \centering
  
  \begin{subfigure}[b]{0.45\textwidth}
    \centering
    \begin{tikzpicture}[scale=1]

   \def\n{6}
   \def\p{pi}
   \def\a{{sin(\p/2 r)/sin(pi/\n r)}}
   \def\b{{sin((pi-pi/\n-\p/2) r)/sin(pi/\n r)}}
   
   \foreach \i in {0,...,5}
    {
      \draw ({\i*360/\n}:\b) -- ({(\i+0.5)*360/\n}:\a) -- ({(\i+1)*360/\n}:\b);
      \fill ({\i*360/\n}:\b) circle[radius=1pt];
      \fill ({(\i+0.5)*360/\n}:\a) circle[radius=1pt];
    }
   
   \foreach \j in {0,2,...,10}
    {
     \draw ({\j*180/\n}:2.1) node {$ P_{\j}$};
     \draw ({\j*180/\n}:0.85) node {$ Q_{\j}$};
    }
    
   \foreach \j in {1,3,...,11}
    {
     \draw ({\j*180/\n}:2.3) node {$ P_{\j}$};
     \draw ({\j*180/\n}:1.64) node {$ Q_{\j}$};
    }
    
   \def\n{6}
   \def\p{4.8}
   \def\a{{sin(\p/2 r)/sin(pi/\n r)}}
   \def\b{{sin((pi-pi/\n-\p/2) r)/sin(pi/\n r)}}
   
   \foreach \i in {1,2,...,\n}
    {
      \draw ({\i*360/\n}:\b) -- ({(\i+0.5)*360/\n}:\a) -- ({(\i+1)*360/\n}:\b);
      \fill ({\i*360/\n}:\b) circle[radius=1pt];
      \fill ({(\i+0.5)*360/\n}:\a) circle[radius=1pt];
    }
    \end{tikzpicture}
    \caption{}
    \label{fig:coaxial-frame}
  \end{subfigure}
  \hfill 
  \begin{subfigure}[b]{0.45\textwidth}
    \centering
    \begin{tikzpicture}[scale=1]

     \def\n{6}
   \def\q{pi}
   \def\a{{sin(\q/2 r)/sin(pi/\n r)}}
   \def\b{{sin((pi-pi/\n-\q/2) r)/sin(pi/\n r)}}
   
   \def\p{4.8}
   \def\c{{sin(\p/2 r)/sin(pi/\n r)}}
   \def\d{{sin((pi-pi/\n-\p/2) r)/sin(pi/\n r)}}
   
   \foreach \i in {0,...,5}
    \fill[gray!20] ({(\i-0.5)*360/\n}:\a) -- ({(\i+0.5)*360/\n}:\a) -- ({(\i+0.5)*360/\n}:\c) -- ({(\i)*360/\n}:\d) -- ({(\i-0.5)*360/\n}:\c);
   
     \foreach \i in {0,...,5}
    {
      \draw ({\i*360/\n}:\b) -- ({(\i+0.5)*360/\n}:\a) -- ({(\i+1)*360/\n}:\b);
    }

%

   \foreach \i in {1,2,...,\n}
    {
      \draw ({\i*360/\n}:\d) -- ({(\i+0.5)*360/\n}:\c) -- ({(\i+1)*360/\n}:\d);
    }
    
    \foreach \i in {0,...,5}
     {
      \draw ({\i*360/\n}:\b) -- ({(\i)*360/\n}:\d) -- ({(\i+0.5)*360/\n}:\a) -- ({(\i+0.5)*360/\n}:\c);
      \draw ({(\i+0.5)*360/\n}:\a) -- ({(\i+1)*360/\n}:\d);
     }
     
     \foreach \j in {1,3,...,11}
    {
     \draw ({\j*180/\n}:2.3) node {$\phantom{Q_{\j}}$};
    }

  \end{tikzpicture}
    \caption{}
    \label{fig:str-jt-top-view}
  \end{subfigure}

  \caption{(a) A tube frame of the form $\mathscr S=\mathscr S_{n,L,\psi}$ projected onto the $yz$-plane. (b) A tube connection for $\mathscr S$ projected onto the $yz$-plane.}\label{fig:coaxial}
 \end{figure}
  
  Because of the dihedral symmetry of $n$-stars, if we set the rest of the $\alpha_i$ and $\gamma_i$ to be as in the theorem, all of the $\mathcal Z_i$ will be rectangular, and by the uniqueness parts of Lemmas \ref{lem:b} and \ref{lem:d}, $\vec\alpha,\vec\gamma$ will be the parameter sets generated by $\alpha,\gamma$. Thus by Lemma \ref{lem:rectangluar-zee-bridge-to-flat-tubes}, $\mathcal S$ is flat.
 \end{proof}
 
 An example of a tube connection over $\mathscr S_{n,L,\psi}$ can be found in Figure \ref{fig:str-jt-example}. The equality of parameters in a coaxial tube joint allows us to identify adjacent coplanar faces and collinear edges that can be merged for simplicity.

 Now we prove our first theorem regarding the specific parameters used in Theorem A.
 
 \begin{theorem}\label{thm:example-flat}
  Let $\mathscr J=\mathscr J_{6,4,\frac{\pi}{3},\frac{3\pi}{2}}$, and let $\vec\alpha,\vec\gamma$ be the parameter sets generated by $\alpha=3.1,\gamma=2.5$. Then $\mathcal C_{\mathscr J,\vec\alpha,\vec\gamma}$ is flat.
 \end{theorem}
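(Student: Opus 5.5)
The plan is to apply Theorem \ref{thm:aligned-tube-joint-consistency} directly. Since $\mathscr J=\mathscr J_{6,4,\frac{\pi}{3},\frac{3\pi}{2}}$ is an aligned $6$-star tube frame by construction, it suffices to verify two things numerically. First, the starting values $\alpha=3.1,\gamma=2.5$ generate positive parameter sets. Second, the discriminants $\Delta_1,\ldots,\Delta_6$ are all positive. Flatness of $\mathcal C_{\mathscr J,\vec\alpha,\vec\gamma}$ then follows, since the remaining zee-bridges are handled by the reflection symmetry already built into that theorem.

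The first step is to write down explicit coordinates. With $n=6$, $\mathcal P$ is the regular hexagon of side length $2$ with its midpoints included, centered at $(4,0,0)$ in the plane $x=4$, with $\hat s=(-1,0,0)$, $\dvec{\tilde{\mathcal P}P_3}$ pointing in direction $\hat k$, and right-handed orientation. The polygon $\mathcal Q$ is the $6$-star with $\angle Q_0=\frac{3\pi}{2}$, centered at $(2,2\sqrt3,0)$ in the plane normal to $\hat t=(\frac12,\frac{\sqrt3}{2},0)$, with left-handed orientation. The circumradii of $\mathcal Q$ alternate and come from the law of sines in the triangles $\tilde{\mathcal Q}Q_iQ_{i+1}$, whose central angle is $\frac{\pi}{6}$. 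All coordinates are therefore expressible in closed form using $\sqrt2,\sqrt3$ and $\sin\frac{\pi}{12}$-type quantities.

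Next I will build the bridge frames $\mathcal F_1,\ldots,\mathcal F_6$ from (\ref{eq:fi}). Even-indexed frames use $(P_i,P_{i-1},Q_i,Q_{i-1},\hat s,\hat t)$. Odd-indexed frames use $(Q_i,Q_{i-1},P_i,P_{i-1},-\hat t,-\hat s)$. I then run the recursion of Section \ref{sec:generating} for $i=0,\ldots,5$:
\[
\alpha_{i+1}=\bar\delta(\mathcal F_{i+1},\alpha_i,\gamma_i),\qquad \gamma_{i+1}=\bar\beta(\mathcal F_{i+1},\alpha_i,\gamma_i).
\]
At each step I compute $A$, $C$, $|AC|$, $\ell$ and $B$ as in (\ref{eq:bars}). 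Along the way I check three things: the denominators $|AC|-\dvec{AC}\cdot\hat s$ and $\dvec{BC}\cdot\hat t-\ell$ are nonzero, the outputs are positive, and $\Delta_{i+1}=\alpha_i-\gamma_{i+1}+|A_iC_i|+\gamma_i-\alpha_{i+1}$ is positive. The remaining parameters $\alpha_{12-i}=\alpha_i$ and $\gamma_{12-i}=\gamma_i$ for $i=1,\ldots,5$ are then automatically positive. One must also be careful that $\alpha_6,\gamma_6$ come from the recursion at $i=5$, and these feed $\Delta_6$ and the zee-bridges $\mathcal Z_6,\mathcal Z_7$.

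The main obstacle is that these quantities involve nested square roots that do not simplify, so exact verification is awkward. Instead I will evaluate them with rigorous error control. Either I use interval arithmetic, or I compute to high precision and show that every positive quantity (denominators, $\alpha_i$, $\gamma_i$, $\Delta_i$) exceeds the accumulated rounding error by a wide margin. I would tabulate $\alpha_i,\gamma_i,\Delta_i$ for $i=0,\ldots,6$ to make the check transparent. The most delicate step is likely $\Delta_i$ near the middle indices ($i$ around $3$), where the frames $\mathcal F_i$ face the inside of the bend at angle $\theta=\frac{\pi}{3}$ and $\bar\delta$ could have a small denominator. If a quantity there turns out to be close to zero, I would verify it with a dedicated, tighter interval computation.
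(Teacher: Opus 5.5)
Your proposal follows the paper's own proof: it applies Theorem~\ref{thm:aligned-tube-joint-consistency} and certifies, with outward-rounding interval arithmetic, that $\alpha_i,\gamma_i$ ($i=0,\ldots,6$) and $\Delta_1,\ldots,\Delta_6$ are positive. The paper's certified values contain no near-degenerate case (the smallest discriminant is $\Delta_5\approx0.488$, and all parameters exceed $2.4$), so the tighter recomputation you anticipate turns out not to be needed.
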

 
 \begin{proof}
  Here we apply Theorem \ref{thm:aligned-tube-joint-consistency}, by checking the positivity of $\alpha_i$, $\gamma_i$, and $\Delta_i$. All calculations were done in Mathematica using interval arithmetic to certify the positivity of these values. Specifically, for each value, we created an interval of radius $10^{-20}$ containing each input parameter, and calculated an interval containing the desired value using outward-rounding interval arithmetic; the positivity of a value is certified when this interval contains only positive numbers. The radii of the intervals around the values were all less than $10^{-7}$. In Table \ref{tab:flatness-data}, we round the center of each interval to 5 decimal places so that that error in these approximations is less than $10^{-5}$.
  \begin{table}
   \centering

\scriptsize
\begin{subtable}[b]{0.45\textwidth}
    \centering
\begin{tabular}{| c | T | T |}
 \hline
 $i$ & \multicolumn{1}{c|}{$\alpha_i$} & \multicolumn{1}{c|}{$\gamma_i$}\\
 \hline
 0 & 3.1 & 2.5 \\
 1 & 4.61771 & 3.1 \\
 2 & 3.52866 & 2.59407 \\
 3 & 3.82395 & 3.95732 \\
 4 & 4.37423 & 2.55177 \\
 5 & 3.10927 & 4.79114 \\
 6 & 4.79114 & 2.48504 \\
 \hline
\end{tabular}
    \caption{}
    \label{fig:parameters-positive}
  \end{subtable}
  \hfill 
  \begin{subtable}[b]{0.45\textwidth}
    \centering
\begin{tabular}{|c|T|}
\hline
 $j$ & \multicolumn{1}{c|}{$\Delta_{j}$}\\
\hline
 1 & 0.67056 \\
 2 & 2.26554 \\
 3 & 0.607 \\
 4 & 1.46227 \\
 5 & 0.48786 \\
 6 & 1.11209 \\
\hline
\end{tabular}
    \caption{}
    \label{fig:delta-i-positive}
  \end{subtable}
   
   \caption{(a) The parameters generated by $\alpha=3.1$, $\gamma=2.5$ for $\mathscr J_{6,4,\frac{\pi}{3},\frac{3\pi}{2}}$. (b) The values $\Delta_j$ (as defined in Theorem \ref{thm:aligned-tube-joint-consistency}).}\label{tab:flatness-data}
  \end{table}%
 \end{proof}
 
 The image of $\mathcal C_{\mathscr J,\vec\alpha,\vec\gamma}$ from Theorem \ref{thm:example-flat} is shown in Figure \ref{fig:example-joint} along with its combinatorial structure.
 
 \begin{figure}
   \centering
   \begin{subfigure}[b]{0.45\textwidth}
    \centering
    \includegraphics[width=\textwidth]{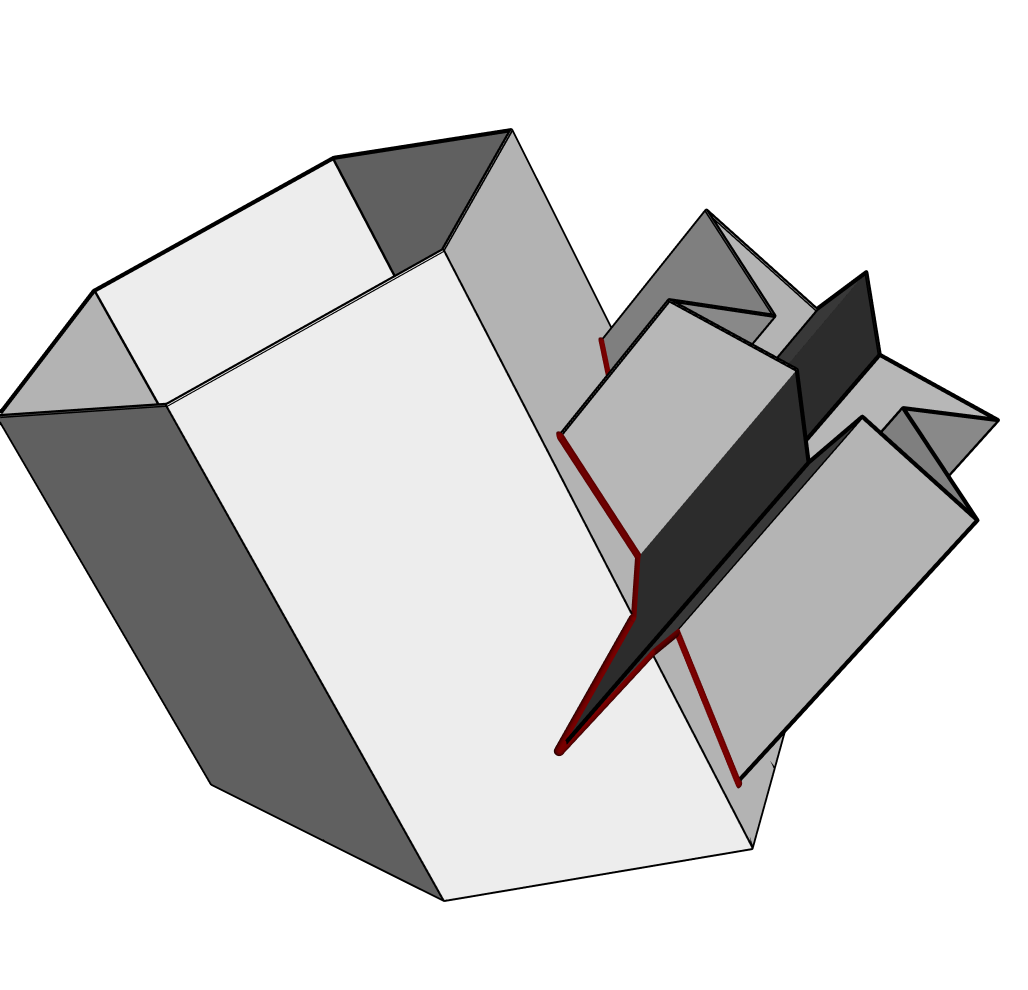}
    \caption{}
    \label{fig:example-joint-image}
  \end{subfigure}
  \hfill 
  \begin{subfigure}[b]{0.45\textwidth}
    \centering

    \begin{tikzpicture}[scale=0.5]
     \def\w{8.38827}
     
     \def\aa{3.52866}
     \def\ab{4.61771}
     \def\ac{3.1}
     \def\ad{4.61771}
     \def\ae{3.52866}
     \def\af{3.82395}
     \def\ag{4.37423}
     \def\ah{3.10927}
     \def\ai{4.79114}
     \def\aj{3.10927}
     \def\ak{4.37423}
     \def\al{3.82395}
     
     \def\ca{2.59407}
     \def\cb{3.1}
     \def\cc{2.5}
     \def\cd{3.1}
     \def\ce{2.59407}
     \def\cf{3.95732}
     \def\cg{2.55177}
     \def\ch{4.79114}
     \def\ci{2.48504}
     \def\cj{4.79114}
     \def\ck{2.55177}
     \def\cl{3.95732}
     
     \fill[gray!20] (0,3) rectangle (\w,15);
     \draw (0,3) -- (0,15);
     \draw (\w,3) -- (\w,15);
     \draw ({\w-\cc},15) -- (\w,15);
     
     \draw[->] (3.99,3) -- (4,3);
     \draw[->] (3.99,15) -- (4,15);
     
     \foreach \i in {4,6,...,14} {\draw (0,\i) -- (\w,\i);}
     \draw ({\w-\ca},13) -- (\w,13);
     \draw ({\w-\cc},3) -- (\w,3);
     \draw ({\w-\ce},5) -- (\w,5);
     \draw ({\w-\cg},7) -- (\w,7);
     \draw ({\w-\ci},9) -- (\w,9);
     \draw ({\w-\ck},11) -- (\w,11);
     
      
     \draw (\ac,3) -- (\cd,4) -- (\ae,5) -- (\cf,6) -- (\ag,7) -- (\ch,8) -- (\ai,9) -- (\cj,10) -- (\ak,11) -- (\cl,12) -- (\aa,13) -- (\cb,14) -- (\ac,15);
     \draw ({\w-\cc},3) -- (\cd,4) -- ({\w-\ce},5) -- (\cf,6) -- ({\w-\cg},7) -- (\ch,8) -- ({\w-\ci},9) -- (\cj,10) -- ({\w-\ck},11) -- (\cl,12) -- ({\w-\ca},13) -- (\cb,14) -- ({\w-\cc},15);
     \draw ({\w-\cc},3) -- ({\w-\ad},4) -- ({\w-\ce},5) -- ({\w-\af},6) -- ({\w-\cg},7) -- ({\w-\ah},8) -- ({\w-\ci},9) -- ({\w-\aj},10) -- ({\w-\ck},11) -- ({\w-\al},12) -- ({\w-\ca},13) -- ({\w-\ab},14) -- ({\w-\cc},15);
     
    \end{tikzpicture}
    
    \caption{}
    \label{fig:example-joint-domain}
  \end{subfigure}
   
   \caption{(a) The image of the polyhedral surface $\mathcal C_{\mathscr J,\vec\alpha,\vec\gamma}$ from Theorem \ref{thm:example-flat}. The self-intersection is highlighted in maroon; the triangular faces are obscured. (b) The combinatorial structure of $\mathcal C_{\mathscr J,\vec\alpha,\vec\gamma}$ with the induced length structure. Adjacent coplanar faces and collinear edges have been merged.}
   \label{fig:example-joint}
  \end{figure}%
 
\section{Immersed Vertices} \label{sec:local-injectivity}
 
 Up until now, we have made no assumptions about how faces of zee-bridges and tube connections intersect. For example, cross-cap vertices and dihedral angles of zero have thus far been allowed. While no embedded polyhedral Klein bottle in $\R^3$ exists, we are aiming for an immersed one---one where the star of each vertex is embedded.

 Let $u$ be a vertex of a polyhedral surface. If $f_1,f_2$ are faces whose intersection is exactly $\{u\}$, we say $f_1,f_2$ \emph{meet kitty-corner} at $u$.
 
 \begin{lemma}\label{lem:kitty}
  Let $\mathcal S$ be a polyhedral surface, and let $u$ be an interior vertex at which 4 or more edges meet. Then $\mathcal S$ is immersed at $u$ if and only if the images of every pair of faces meeting kitty-corner at $u$ intersect only at the image of $u$.
 \end{lemma}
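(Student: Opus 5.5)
The forward direction is immediate. If the star of $u$ is embedded, then any two faces $f_1,f_2$ of the star meeting kitty-corner at $u$ have $f_1\cap f_2=\{u\}$. By the definition of embedded, their images can meet only at $\phi^0(u)$.

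For the converse, assume every kitty-corner pair at $u$ has images meeting only at $\phi^0(u)$. We must show that every pair of distinct faces $f_1,f_2$ in the star of $u$ meet only in the image of their shared vertex or shared edge. Since $u$ is interior, the faces around $u$ form a cyclic sequence $f_0,\dots,f_{m-1}$. Consecutive faces share exactly an edge through $u$. Non-consecutive faces share only $u$, as long as $m\ge 4$ guarantees no other coincidences, which is where the hypothesis of 4 or more edges enters. So each pair is either kitty-corner, handled by hypothesis, or adjacent.

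The adjacent case is the main step. Let $f_i,f_{i+1}$ share the edge $e=[u,v]$. Both images are strictly convex planar polygons containing the segment $\phi(e)$ as a side. I would split on whether their planes coincide.

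If the planes differ, the two planes meet in the line through $\phi(e)$. Each polygon meets that line exactly in its side $\phi(e)$, by strict convexity. So the intersection of the images is exactly $\phi(e)$, as required.

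If the planes coincide (dihedral angle $0$ or $\pi$), then either the polygons lie on opposite sides of the line, and again meet only in $\phi(e)$, or they fold onto the same side. I expect this folded sub-case to be the real obstacle.

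For the folded sub-case, I would use the remaining faces to derive a contradiction. With $m\ge4$ there is a face $f_{i+2}$ that is kitty-corner to $f_i$, and a face $f_{i-1}$ that is kitty-corner to $f_{i+1}$.

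When $f_i$ and $f_{i+1}$ fold onto the same side, their corners at $u$ overlap in a nondegenerate angular sector. Then I would consider the edge $[u,w]$ of $f_{i+1}$ other than $e$, which is shared with $f_{i+2}$. Near $\phi^0(u)$, this edge lies inside the overlapping wedge, or on its boundary ray coinciding with an edge of $f_i$. In either case, a short initial segment of $\phi([u,w])$, minus $\phi^0(u)$, lies in the image of $f_i$. That segment also lies in $f_{i+2}$. This contradicts the hypothesis on the kitty-corner pair $f_i,f_{i+2}$.

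The symmetric argument with $f_{i-1}$ covers the configuration in which the wedge of $f_i$ is the smaller one. The care needed is to choose, between the two wedges at $u$, the one nested inside the other. Its far edge then lies in the other face near $u$. Doing this bookkeeping cleanly is the step I expect to require the most attention.

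Having ruled out the folded configuration, every pair of faces in the star intersects correctly, so the star is embedded and $\mathcal S$ is immersed at $u$.
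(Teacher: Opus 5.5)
Your proposal is correct and follows essentially the same route as the paper. The paper also reduces to two adjacent faces folded onto each other in a common plane. It takes the second edge at $u$ of their intersection polygon $\mathcal P$, which is exactly the far edge of your narrower wedge, and observes that this edge is shared with a third face that, by valence $\geq 4$, is kitty-corner to the other face. The only differences are that the paper argues contrapositively, and that your explicit case split on the dihedral angle is left implicit there in the non-collinearity of $u$, $v$, $x$.
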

 
 \begin{proof}
  Suppose $\mathcal S$ is not immersed at $u$, meaning that either the images of two faces meeting kitty-corner at $u$ intersect away from the image of $u$ (in which case we are done), or the images of two faces  that contain both $u$ and some other vertex $v$ intersect at some point $x$ not on the edge connecting the images of $u$ and $v$. In the latter case, the images of the faces $f$ and $g$ are convex planar polygons containing the (non-collinear) images of $u$, $v$, and $x$, so the intersection of the images of $f$ and $g$ is some convex planar polygon $\mathcal P$, one of whose boundary edges is the image of the edge connecting $u$ and $v$. The other edge of $\mathcal P$ from the image of $u$ must be contained in the image of an edge of a third face $h$. Thus the images of $f,g,h$ intersect pairwise, but since $u$ had valence at least 4, $h$ must meet either $f$ or $g$ kitty-corner. Thus either $f$ and $h$ or $g$ and $h$ are a pair of kitty-corner faces meeting at $u$ whose images intersect away from the image of $u$.
 \end{proof}
 
 Note that all interior vertices of the combinatorial structure $T$ have valence at least 4. Also the boundary vertices $w_i,y_i$ of any tube connection are immersed since the images of the two trapezoidal faces meeting at a boundary vertex intersect only along the image of the interior edge leaving the boundary vertex.
 
 \begin{proposition}\label{prop:str-jt-embedded}
  Every tube connection over $\mathscr S=\mathscr S_{n,L,\psi}$ is embedded.
 \end{proposition}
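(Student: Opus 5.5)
The plan is to project everything onto the $yz$-plane with the orthogonal projection $\Pi$ used in the proof of Theorem \ref{thm:straight-flat}, and to use $x$-coordinates to separate faces whose projections overlap only in degenerate ways. Since $\hat s=\hat t=(-1,0,0)$ is normal to both the plane of $\mathcal P$ and the plane of $\mathcal Q$, the faces of a tube connection $\mathcal C$ over $\mathscr S$ fall into two types.

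The first type consists of the trapezoidal faces $[w_i,y_{i-1},c_{i-1},a_i]$ and $[w_{i-1},y_i,c_i,a_{i-1}]$ of each strip $S_i$. Each such face contains an edge of $\mathcal P$ or of $\mathcal Q$ together with the direction $\hat s$. It is therefore a vertical wall, and its projection under $\Pi$ is exactly one edge of $\Pi(\mathcal P)$ or of $\Pi(\mathcal Q)$. The trapezoids along $\mathcal P$ lie in the slab $x\le L$ and those along $\mathcal Q$ lie in $x\ge -L$.

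The second type consists of the triangular faces. In $S_i$ these are $[A,B,C]$ and $[B,C,D]$, with $A,B$ over consecutive vertices of one polygon and $C,D$ over the corresponding consecutive vertices of the other. They project to the two triangles that subdivide the quadrilateral $R_i=\Pi(P_{i-1}P_iQ_iQ_{i-1})$ along the diagonal $\Pi(B)\Pi(C)$. This is the picture in Figure \ref{fig:str-jt-top-view}.

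The steps, in order, are as follows.

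First, I will show that the quadrilaterals $R_1,\ldots,R_{2n}$ have pairwise disjoint interiors and together tile the planar annulus between $\Pi(\mathcal P)$ and $\Pi(\mathcal Q)$. This uses the dihedral symmetry of both $n$-stars about their common axis. Each $R_i$ lies in the angular sector of angle $\pi/n$ spanned by the rays from the common center through $\Pi(P_{i-1})$ and $\Pi(P_i)$, which coincide with the rays through $\Pi(Q_{i-1})$ and $\Pi(Q_i)$. The two triangles of each $R_i$ are nondegenerate, because their vertices $\Pi(P_{i-1}),\Pi(P_i)$ lie on $\Pi(\mathcal P)$ while $\Pi(Q_{i-1}),\Pi(Q_i)$ lie strictly inside it. Consequently no triangular face is vertical, and $\Pi$ is injective on each triangular face.

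Second, I will deduce the conclusion from this structure. Take two faces whose images meet. If neither is a trapezoid, their projections are two triangles of the subdivision, so they meet only along a shared projected edge or vertex. Since both faces are graphs over their projections and agree on the shared vertices, their intersection is the corresponding shared edge or vertex. If one face is a trapezoid over an edge $e$ of $\Pi(\mathcal P)$ or $\Pi(\mathcal Q)$, then a triangle can meet it only over $\partial R_i\cap e$, which is the edge $e$ itself or an endpoint of $e$. There I will compare heights in $x$: a triangle adjacent along $e$ meets the wall exactly in the shared edge, because a plane meets a vertical plane in a line. Two trapezoids over different edges meet only over shared polygon vertices, that is, along their common vertical boundary edge.

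The main obstacle is the first step, specifically that $\Pi(\mathcal Q)$ lies strictly inside $\Pi(\mathcal P)$ and that each quadrilateral $R_i$ is a genuine non-self-overlapping quadrilateral lying in its sector. For $\psi>\pi$ this is the containment already used in the proof of Theorem \ref{thm:straight-flat}: the right cylinder over $\mathcal Q$ lies inside the one over $\mathcal P$. I will verify it by comparing the distances of $Q_i$ and $P_i$ from the center, using the explicit radii of the even and odd vertices of an $n$-star. If the statement is meant for all $\psi$, I would first try to reduce to this containment, and otherwise restrict to the range $\pi<\psi<\frac{2\pi(n-1)}{n}$ used elsewhere in the paper.
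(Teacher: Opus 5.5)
Your proposal is correct and is essentially the paper's argument. The paper also projects onto the $yz$-plane and reads off from Figure \ref{fig:coaxial} that the triangular faces tile the annulus between $\Pi(\mathcal Q)$ and $\Pi(\mathcal P)$ while the trapezoids project to polygon edges; your case analysis (non-vertical triangles as graphs over their projections, trapezoids as vertical walls) makes explicit what the paper packages as ``kitty-corner projections meet only at the vertex, so Lemma \ref{lem:kitty} applies'' plus ``disjoint faces have disjoint projections.''

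Your restriction to $\pi<\psi<\frac{2\pi(n-1)}{n}$ is the right call. The paper's proof tacitly assumes the configuration of Figure \ref{fig:coaxial}, and the literal statement fails, for instance, at $\psi=\pi$: there $\Pi(\mathcal Q)=\Pi(\mathcal P)$, and once all parameters exceed $2L$, the $\mathcal P$-side and $\mathcal Q$-side trapezoids over the same edge overlap in a rectangle.
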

  
 \begin{proof}
  Consider the projection of the image of the polyhedral surface onto the $yz$-plane. For any tube connection over $\mathscr S$, we start by noting that the projections of the images of every pair of faces meeting kitty-corner at an interior vertex intersect only at the projection of the image of the point at which they meet, as we see in Figure \ref{fig:coaxial}, so these vertices are all immersed by Lemma \ref{lem:kitty}. Furthermore the projections of images of disjoint faces are disjoint, so the tube connection is embedded.
 \end{proof}
 
 
 
 
 Let $\mathcal S$ be a polyhedral surface, and suppose $u$ is a vertex of $\mathcal S$ and $f$ is a face containing $u$. Then exactly two other vertices $v,w$ lying in $f$ are connected by edges to $u$. If $\phi$ is the realization map, we call the vectors $\vec v=\dvec{\phi(u)\phi(v)},\vec w=\dvec{\phi(u)\phi(w)}$ the \emph{edge vectors} of $f$ from $u$. Note that by strict convexity, these two edge vectors are linearly independent.
 

 \begin{lemma}\label{lem:m}
  Suppose $\mathcal S$ is a polyhedral surface with extended realization map $\phi$. Let $u$ be a vertex of $\mathcal S$ lying in faces $f_1,f_2$, and let $\vec v_i,\vec w_i$ be the edge vectors of $f_i$ from $u$ for $i=1,2$. Set 
  $$M=\begin{bmatrix} \vec v_1 & \vec w_1 & -\vec v_2 & -\vec w_2 \end{bmatrix}.$$
  Then the following are equivalent.
  \begin{enumerate}
   \item The images of ${f_1}$ and ${f_2}$ intersect only at the image of $u$.
   \item There is no nonzero vector in $\ker M$ with all non-negative entries.
   \item Every nonzero vector in $\ker M$ has a positive and a negative entry.
   \item Either
    \begin{itemize}
     \item $M$ has rank 3 and there exists a vector in $\ker M$ with both a positive and a negative entry, or
     \item $M$ has rank 2 and in the kernel of every $3\times3$ submatrix of $M$ there exists a vector with both a positive and a negative entry.
    \end{itemize}
  \end{enumerate}
 \end{lemma}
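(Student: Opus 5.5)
Set $U=\phi(u)$. The plan is to pass from intersection points of the two face images to nonnegative vectors in $\ker M$, and back. Because the image of $f_i$ is strictly convex, and the edge vectors $\vec v_i,\vec w_i$ are the two edges at the corner $U$, the image of $f_i$ lies in the cone $U+\{a\vec v_i+b\vec w_i: a,b\ge 0\}$. Near $U$ it agrees with that cone: the points $U+a\vec v_i+b\vec w_i$ with $a,b\ge0$ and $a+b$ small belong to the triangle $U,U+\vec v_i,U+\vec w_i$, which lies inside the face.

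For (1)$\Rightarrow$(2), suppose $(a,b,c,d)\neq0$ is a nonnegative kernel vector. Then $a\vec v_1+b\vec w_1=c\vec v_2+d\vec w_2=:\vec p$. Linear independence of each pair of edge vectors forces $\vec p\neq 0$: if $\vec p=0$ then $a=b=0$ and $c=d=0$. Scaling the vector down, $U+\vec p$ lies in both triangles, so it is a common point of the two face images other than $U$.

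For (2)$\Rightarrow$(1), suppose the images share a point $x\neq U$. Then $x-U$ lies in both cones, so it can be written as $a\vec v_1+b\vec w_1$ and as $c\vec v_2+d\vec w_2$ with all coefficients nonnegative. This gives a nonnegative nonzero kernel vector. Hence (1) and (2) are equivalent.

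For (2)$\Leftrightarrow$(3), note that $\ker M$ is a linear subspace, so it is closed under negation. A nonzero vector with no positive entry is therefore the negative of a nonzero nonnegative vector. So "every nonzero kernel vector has entries of both signs" is exactly "there is no nonzero kernel vector that is nonnegative or nonpositive", which is (2).

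For (3)$\Leftrightarrow$(4), first observe that $\vec v_1,\vec w_1$ are independent, so $\operatorname{rank}M\ge2$, and $M$ is $3\times4$, so $\operatorname{rank}M\le 3$.

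In the rank 3 case the kernel is a line spanned by a single vector $k$. Condition (3) says exactly that $k$ has entries of both signs, which is the first bullet of (4).

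The rank 2 case is the main obstacle. Here the kernel is a 2-plane $K$, and we must show that every nonzero vector of $K$ has both signs if and only if each $3\times3$ submatrix has a kernel vector of both signs.

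Deleting column $j$ gives a submatrix whose kernel corresponds to $K\cap\{x_j=0\}$. This intersection is nontrivial because $K$ is 2-dimensional and we impose one linear condition. Any two columns forming one face pair are independent, so each $3\times3$ submatrix still has rank 2. Its kernel is therefore exactly a line, namely the line of $K$ on which the $j$th coordinate vanishes.

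For (3)$\Rightarrow$(4), apply (3) to the spanning vector of each of these four lines.

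For (4)$\Rightarrow$(3), argue by contradiction and suppose $K$ contains a nonzero vector $x\ge0$. The plan is to move inside $K$ toward a boundary of the nonnegative orthant. Pick another vector $y\in K$ independent of $x$. Consider $x+ty$ and increase $t$ from $0$ until the first coordinate hits zero. This is possible after replacing $y$ by $-y$ if necessary, since $y$ must have some entry of nonzero sign. The vector $x+ty$ can never become $0$, because $x$ and $y$ are independent. At the first such $t$, $x+ty$ is nonnegative, nonzero, and has a zero in some coordinate $j$. So it spans the kernel of the $3\times3$ submatrix with column $j$ deleted, and that kernel has no vector with entries of both signs. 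This contradicts the second bullet of (4).

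The rank 2 case is therefore the only step needing care: one must identify the submatrix kernels with the lines $K\cap\{x_j=0\}$, and check that the sliding argument actually lands on a coordinate hyperplane.
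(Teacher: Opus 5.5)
Your proof is correct. For (1)$\Leftrightarrow$(2), (2)$\Leftrightarrow$(3), and the rank-3 case it is the paper's argument. You also make explicit, using the independence of each pair $\vec v_i,\vec w_i$, why a nonzero nonnegative kernel vector gives an intersection point other than $\phi(u)$; the paper leaves this implicit.

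The real difference is in the rank-2 case. The paper argues in the column space of $M$:
\begin{itemize}
\item both pairs $\{\vec v_i,\vec w_i\}$ are bases of the same plane;
\item two such cones meet away from the origin if and only if a generator of one lies in the other;
\item such a containment is exactly a nonzero nonnegative vector in the kernel of a $3\times3$ submatrix, and that vector must span the kernel.
\end{itemize}
You argue instead inside the two-dimensional kernel $K$. You identify each submatrix kernel with the line $K\cap\{x_j=0\}$, and you slide a nonnegative vector along $x+ty$ until some coordinate vanishes.

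These are dual descriptions of the same fact. A nonzero nonnegative $z\in K$ with $z_j=0$ is precisely a generator of one cone written as a nonnegative combination of the other cone's generators. Your version proves the planar cone-intersection fact that the paper simply asserts. It also checks that every $3\times3$ submatrix has rank exactly $2$, so its kernel is a line, and it treats the two implications separately rather than folding them into one ``if and only if.'' The paper's version is shorter and more visual.
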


 \begin{proof}
  Since the image of $ f_i$ is a strictly convex polygon, every point in $\phi(f_i)$ has the form $\phi(u)+A_i\vec v_i+B_i\vec w_i$ for some $A_i,B_i\geq0$ for $i=1,2$. Furthermore, if $A_i,B_i\geq0$, then there exists $\epsilon>0$ small enough so that $\phi(u)+\epsilon(A_i\vec v_i+B_i\vec w_i)$ lies in $\phi(f_i)$. Thus there exists $y\in\phi( f_1)\cap\phi( f_2)\setminus\{\phi(u)\}$ if and only if there exist $A_1,B_1,A_2,B_2\geq0$, not all zero, such that $A_1\vec v_1+B_1\vec w_1=A_2\vec v_2+B_2\vec w_2$, or equivalently $A_1\vec v_1+B_1\vec w_1-A_2\vec v_2-B_2\vec w_2=0$. This proves $1\Leftrightarrow2$ by contrapositive.
  
  The kernel of $M$ having no nonzero vectors with all non-negative entries is equivalent to every nonzero vector $\vec x\in\ker M$ having a negative entry, and since $-\vec x\in\ker M$ as well, $\vec x$ also has a positive entry. This proves $2\Leftrightarrow3$.
  
  To prove $3\Leftrightarrow4$, we first note that since each pair $\{\vec v_i,\vec w_i\}$ is linearly independent, $M$ has rank 2 or 3. If $M$ has rank 3, then $\ker M$ has dimension 1. Thus any nonzero vector in $\ker M$ spans $\ker M$, so if one nonzero vector has a positive and negative entry, they all do.
  
  If $M$ has rank 2, then each pair $\{\vec v_i,\vec w_i\}$ is a basis for the column space of $M$.
   The strictly convex cones generated by $\{\vec v_1,\vec w_1\}$ and $\{\vec v_2,\vec w_2\}$ then intersect if and only if a generator of one cone is contained in the other cone---that is, one of $\vec v_1,\vec w_1$ is a non-negative linear combination of $\vec v_2,\vec w_2$ or vice versa. And such a linear combination corresponds to the existence of a nonzero vector in the kernel of a $3\times3$ submatrix of $M$ with all non-negative entries, which necessarily spans this kernel, disallowing the existence of a vector with both a positive and a negative entry.
  \end{proof}

The final condition of Lemma \ref{lem:m} gives us a convenient algorithmic way of checking local injectivity through verifying inequalities.

\begin{theorem}\label{thm:example-locally-injective}
 Let $\mathscr J=\mathscr J_{6,4,\frac{\pi}{3},\frac{3\pi}{2}}$, and let $\vec\alpha,\vec\gamma$ be the parameter sets generated by $\alpha=3.1,\gamma=2.5$. Then $\mathcal C_{\mathscr J,\vec\alpha,\vec\gamma}$ is immersed.
\end{theorem}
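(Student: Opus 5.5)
The proof will be a finite, computer-certified check built on Lemma \ref{lem:kitty} and the fourth condition of Lemma \ref{lem:m}. It mirrors the proof of Theorem \ref{thm:example-flat}.

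First I dispose of the boundary vertices $w_i,y_i$. As noted after Lemma \ref{lem:kitty}, these are immersed, because their two trapezoidal faces meet only along the interior edge leaving the vertex. Every remaining vertex $a_i$ or $c_i$ is interior and has valence at least $4$. So Lemma \ref{lem:kitty} reduces immersion at such a vertex $u$ to a single check: for every pair of faces $f_1,f_2$ meeting kitty-corner at $u$, their images meet only at $\Phi(u)$.

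Next I would cut down the number of vertices using the reflection $\rho$ from the proof of Theorem \ref{thm:aligned-tube-joint-consistency}. The frame $\mathscr J_{6,4,\frac{\pi}{3},\frac{3\pi}{2}}$ is aligned, so $\rho$ maps the tube connection to itself, sending $a_i\mapsto a_{2n-i}$ and $c_i\mapsto c_{2n-i}$ (with $n=6$). It therefore suffices to treat $a_0,\ldots,a_6$ and $c_0,\ldots,c_6$.

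At each such vertex I list the faces of its star from Figure \ref{fig:j}. These are the trapezoids from the adjacent strips and the triangles $[a,b,c]$, $[d,c,b]$ of the two zee-bridges $\mathcal Z_i$, $\mathcal Z_{i+1}$ containing it. I then enumerate all pairs of these faces sharing only $u$. For each pair I form the $3\times 4$ matrix $M$ of Lemma \ref{lem:m} from the edge vectors, with coordinates computed from the explicit $n$-stars and from the parameters $\alpha_i,\gamma_i$ of Table \ref{tab:flatness-data}.

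For each matrix I verify condition 4 of Lemma \ref{lem:m}. In the generic rank-3 case, the kernel is spanned by the vector of signed $3\times3$ minors (the cofactor/cross-product formula). So it suffices to certify that two of these minors, taken with their alternating signs, have strictly opposite signs. Strict sign information is robust under interval enclosures. If some pair has rank 2, for example two coplanar faces, I would instead exhibit, for each $3\times3$ submatrix, a kernel vector with entries of both signs. Geometrically this amounts to checking that neither planar cone contains a generator of the other, which can be done with $2\times2$ determinants within the common plane.

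All of these computations would be carried out in Mathematica with outward-rounding interval arithmetic, as in Theorem \ref{thm:example-flat}. The inputs get intervals of radius $10^{-20}$, and the certified signs of the relevant minors would be recorded in a table.

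The main obstacle is the rank-2 case. Interval arithmetic can never certify that a determinant is exactly zero. So if some kitty-corner pair is genuinely coplanar, I would need to prove coplanarity exactly from the construction. Merged coplanar faces in Figure \ref{fig:example-joint-domain} suggest this may happen near the symmetry plane at $i=0$ and $i=n$. There I would use the reflection symmetry, together with the fact that the faces lie in the $xy$-plane's symmetric configuration, to prove coplanarity exactly, and then handle those few cases by a direct planar angle argument. Such an argument would show the two angular sectors at $u$ are disjoint, with total angle less than $2\pi$. Every other pair I expect to be rank 3 and to pass the certified sign check.
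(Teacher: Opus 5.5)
Your plan is essentially the paper's proof. The paper reduces by symmetry to $a_i,c_i$ for $i=0,\ldots,6$, forms the matrix $M$ of Lemma \ref{lem:m} for the two kitty-corner pairs at each $a_i$ and the nine at each $c_i$, and certifies with outward-rounding interval arithmetic that the cofactor vector in $\ker M$ has a strictly positive and a strictly negative entry, which certifies rank $3$ and Condition 4 at once. The rank-$2$ case you anticipate never occurs: every kitty-corner pair, including those at $i=0$ and $i=6$, yields cofactors of both certified signs, because the merged coplanar faces are edge-adjacent rather than kitty-corner and only force some individual cofactors to vanish.
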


The proof requires going through each interior vertex and applying Condition 4 from Lemma \ref{lem:m} to each pair of kitty-corner faces meeting at that vertex. We defer the full proof to Appendix \ref{sec:appendix}.
 
\section{Global Construction} \label{sec:constructions}

 We now piece together several flat immersed tube connections to form the polyhedral surface $\nabla$ referenced in Theorem A.
 
 For the remainder of this section we let $\mathscr J=\mathscr J_{6,4,\frac{\pi}{3},\frac{3\pi}{2}}$ and $\mathscr S=\mathscr S_{6,2,\frac{3\pi}{2}}$. Let $\vec\alpha,\vec\gamma$ be the parameter sets generated by $\alpha=3.1,\gamma=2.5$ for $\mathscr J$, and let $\mathcal J=\mathcal C_{\mathscr J,\vec\alpha,\vec\gamma}$. Also let $\vec \alpha',\vec\gamma'$ be the parameter sets generated by $\alpha'=1,\gamma'=1$ for $\mathscr S$, and let $\mathcal S=\mathcal C_{\mathscr S,\vec\alpha',\vec\gamma'}$.\footnote{Because of Theorem \ref{thm:straight-flat} and Proposition \ref{prop:str-jt-embedded}, the choice of $\alpha',\gamma'$ here is arbitrary; any positive values would work. We use $\alpha'=\gamma'=1$ here for simplicity and for visual clarity in Figure \ref{fig:str-jt-example}.} The tube connection $\mathcal J$ was shown in Figure \ref{fig:example-joint}, and the tube connection $\mathcal S$ is shown in Figure \ref{fig:str-jt-example}. Note that each of $\mathcal J$ and $\mathcal S$ has a boundary curve that is a regular hexagon and a boundary curve that is a $6$-star with an interior angle of $\frac{3\pi}{2}$.
 
 \begin{figure}
  \centering
  \begin{subfigure}[b]{0.45\textwidth}
    \centering
    \includegraphics[width=\textwidth]{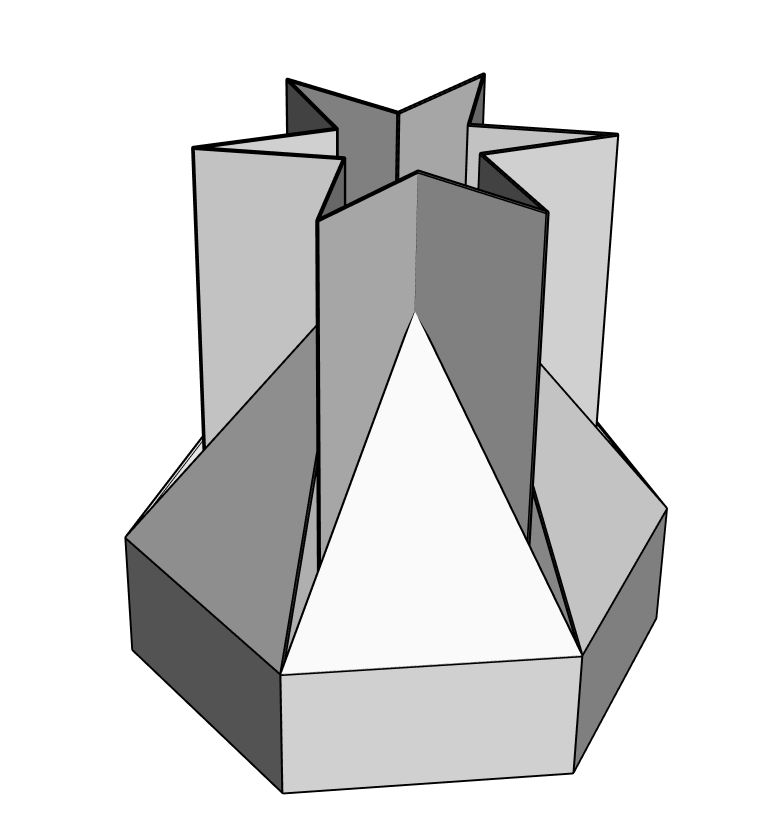}
    \caption{}
    \label{fig:str-jt-image}
  \end{subfigure}
  \hfill 
  \begin{subfigure}[b]{0.45\textwidth}
    \centering
    \begin{tikzpicture}[scale=0.5]
    \def\w{4.33983} 
    \def\h{2.33983} 
    \def\l{0.674795} 
    
    \fill[gray!20] (0,1) rectangle (\w,13);
    
    \draw (\w,13) -- (\w,1) -- ({1+\h},1);
    \draw (0,1) -- (0,13);
    
    \draw[->] ({\w/2-0.01},1) -- ({\w/2},1);
    \draw[->] ({\w/2-0.01},13) -- ({\w/2},13);

    \draw (1,1) -- (1,13);
    
    \foreach \i in {1,...,5,6}
    {
     \draw (0,{2*\i}) -- (\w,{2*\i});
     \draw ({1+\h},{2*\i-1}) -- (1,{2*\i}) -- ({1+\h},{2*\i+1});
     \draw ({1+\h},{2*\i+1}) -- ({\w},{2*\i+1});
     \draw ({1+\h},{2*\i-1}) -- ({1+\l},{2*\i}) -- ({1+\h},{2*\i+1});
    }

   \end{tikzpicture}    \caption{}
    \label{fig:str-jt-domain}
  \end{subfigure}
  
  \caption{(a) The image of a tube connection $\mathcal S$ for even $j$ from the proof of Theorem A. (b) The combinatorial structure of $\mathcal S$ with the induced length structure; adjacent coplanar faces have been merged.}\label{fig:str-jt-example}
 \end{figure}

 For $j=0,1,2$ with indices taken modulo 3, let $\mathcal J^j$ be the result of starting with $\mathcal J$, first translating  by the vector $\left(-6,-2\sqrt3,0\right)$ and then rotating by an angle of $\frac{2j\pi}{3}$ around the $z$-axis counterclockwise as viewed from above. Also let $\mathcal S^j$ be the result of starting with $\mathcal S$, translating by the vector $(0,2\sqrt3,0)$, and then rotating by an angle of $\frac{(2j-1)\pi}{3}$ around the same axis. The images of the resulting six tube connections are shown in Figure \ref{fig:six-frames}.
 
 \begin{figure}
  \centering
\begin{overpic}[width=0.75\textwidth,tics=10]{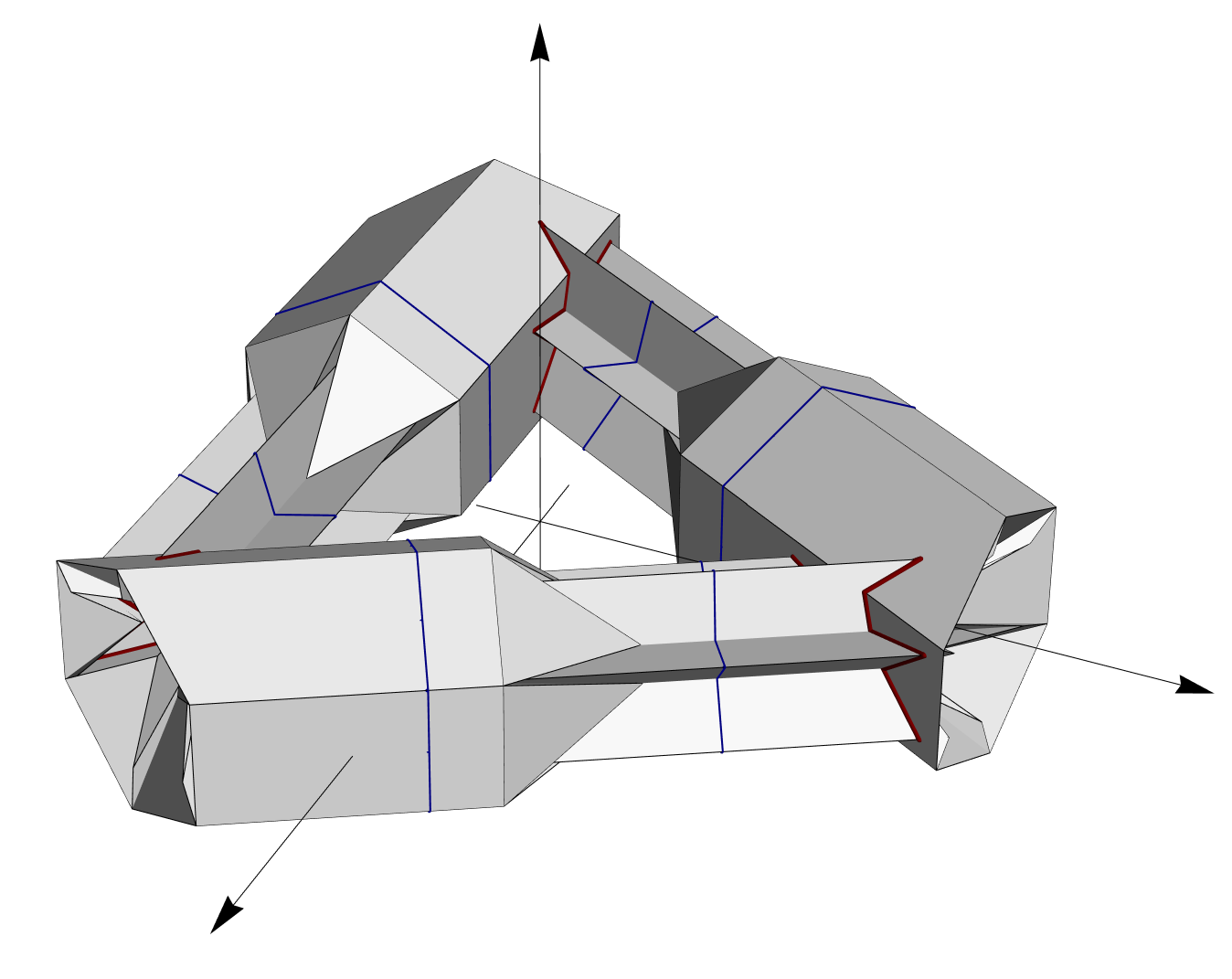}
 \put (28,65) {\small$\mathcal J^0$}
 \put (45,12) {\small$\mathcal S^0$}
 \put (14,46) {\small$\mathcal S^2$}
 \put (2,36) {\small$\mathcal J^1$}
 \put (65,51) {\small$\mathcal S^1$}
 \put (87,38) {\small$\mathcal J^2$}

\end{overpic}

  \caption{The images of the polyhedral surfaces $\mathcal J^j$ and $\mathcal S^j$ that are combined to form $\nabla$. Self-intersections are shown in maroon, and the (shared) boundary curves of the $\mathcal J^j$ and $\mathcal S^j$ are shown in blue.}
  \label{fig:six-frames}
 \end{figure}
 
 Note that the hexagonal boundary curve of $\mathcal J^j$ aligns with that of $\mathcal S^{j-1}$, and the star-shaped boundary curve of $\mathcal J^j$ aligns with that of $\mathcal S^{j+1}$ for each $j=0,1,2$. Specifically, letting $\Phi^j$ and $\Psi^j$ be the realization maps for $\mathcal J^j$ and $\mathcal S^j$ respectively, we have that 
 \begin{equation}\label{eq:boundary-vertex-id}
 \Phi^j(w_i)=\begin{cases} \Psi^{j-1}(w_{12-i}) & \text{$i$ even} \\ \Psi^{j+1}(w_i) & \text{$i$ odd} \end{cases} \qquad \Phi^j(y_i)=\begin{cases} \Psi^{j+1}(y_i) & \text{$i$ even} \\ \Psi^{j-1}(y_{12-i}) & \text{$i$ odd} \end{cases}.
 \end{equation}
 In this way, $\Phi^j$ and $\Psi^{j-1}$ exactly agree on the right side of the combinatorial structure $T$ (as shown in Figure \ref{fig:j}), sending the boundary curve to the same $6$-star, while $\Phi^j$ and $\Psi^{j+1}$ map the left side of $T$ to the same regular hexagon with opposite orientations.
 
 We then define $\nabla$ to be the polyhedral surface obtained by gluing the $\mathcal J^j$ and $\mathcal S^j$ together along their shared edges. Formally, we let $\nabla=((V,F),\mathbf{\Phi}^0)$ be the polyhedral surface obtained by setting $V$ to be the disjoint union of the vertex sets of the six tube connections modulo identification of vertices that map to the same point under Equation (\ref{eq:boundary-vertex-id}), and letting $F$ and $\mathbf{\Phi}^0$ be the corresponding unions of faces and vertex realization maps after accounting for the vertex identifications. We refer to the equivalence classes of pairs of vertices identified in this way as the \emph{glued} vertices of $\nabla$. The resulting combinatorial structure of $\nabla$ is shown in Figure \ref{fig:nabla-domain}.
 
  \begin{figure}
  \centering
  
  \begin{tikzpicture}[scale=0.3,rotate=0]
   \newcommand{\strjt}{
    \def\w{4.33983} 
    \def\h{2.33983} 
    \def\l{0.674795} 
    
    \fill[gray!20] (0,0) rectangle (\w,12);

    \draw (1,0) -- (1,12);
    
    \foreach \i in {0,1,...,5}
    {
     \draw (0,{2*\i}) -- (\w,{2*\i});
     \draw (1,{2*\i}) -- ({1+\h},{2*\i+1}) -- (1,{2*\i+2});
     \draw ({1+\h},{2*\i+1}) -- ({\w},{2*\i+1});
     \draw ({1+\l},{2*\i}) -- ({1+\h},{2*\i+1}) -- ({1+\l},{2*\i+2});
    }
   }
   
   \newcommand{\vjt}{
     \def\w{8.38827}
     
     \def\aa{3.52866}
     \def\ab{4.61771}
     \def\ac{3.1}
     \def\ad{4.61771}
     \def\ae{3.52866}
     \def\af{3.82395}
     \def\ag{4.37423}
     \def\ah{3.10927}
     \def\ai{4.79114}
     \def\aj{3.10927}
     \def\ak{4.37423}
     \def\al{3.82395}
     
     \def\ca{2.59407}
     \def\cb{3.1}
     \def\cc{2.5}
     \def\cd{3.1}
     \def\ce{2.59407}
     \def\cf{3.95732}
     \def\cg{2.55177}
     \def\ch{4.79114}
     \def\ci{2.48504}
     \def\cj{4.79114}
     \def\ck{2.55177}
     \def\cl{3.95732}
     
     \fill[gray!20] (0,0) rectangle (\w,12);

     \foreach \i in {2,4,...,10} {\draw (0,\i) -- (\w,\i);}
     \draw ({\w-\ca},1) -- (\w,1);
     \draw ({\w-\cc},3) -- (\w,3);
     \draw ({\w-\ce},5) -- (\w,5);
     \draw ({\w-\cg},7) -- (\w,7);
     \draw ({\w-\ci},9) -- (\w,9);
     \draw ({\w-\ck},11) -- (\w,11);
      
     \draw (\cl,0) -- (\aa,1) -- (\cb,2) -- (\ac,3) -- (\cd,4) -- (\ae,5) -- (\cf,6) -- (\ag,7) -- (\ch,8) -- (\ai,9) -- (\cj,10) -- (\ak,11) -- (\cl,12);
     \draw (\cl,0) -- ({\w-\ca},1) -- (\cb,2) -- ({\w-\cc},3) -- (\cd,4) -- ({\w-\ce},5) -- (\cf,6) -- ({\w-\cg},7) -- (\ch,8) -- ({\w-\ci},9) -- (\cj,10) -- ({\w-\ck},11) -- (\cl,12);
     \draw ({\w-\al},0) -- ({\w-\ca},1) -- ({\w-\ab},2) -- ({\w-\cc},3) -- ({\w-\ad},4) -- ({\w-\ce},5) -- ({\w-\af},6) -- ({\w-\cg},7) -- ({\w-\ah},8) -- ({\w-\ci},9) -- ({\w-\aj},10) -- ({\w-\ck},11) -- ({\w-\al},12);
   }
   
   \def\wj{8.38827}
   \def\ws{4.33983}
   
   \fill[gray!20] (0,0) rectangle ({3*\ws+3*\wj},12);

   \strjt
   
   \begin{scope}[shift={(\ws+\wj,0)},xscale=-1]
    \vjt
   \end{scope}
   
   \begin{scope}[shift={(\ws+\wj,0)}]
    \strjt
   \end{scope}
   
   \begin{scope}[shift={(2*\ws+2*\wj,12)},xscale=-1,yscale=-1]
    \vjt
   \end{scope}
   
   \begin{scope}[shift={(2*\ws+2*\wj,0)}]
    \strjt
   \end{scope}
   
   \begin{scope}[shift={(3*\ws+3*\wj,0)},xscale=-1,]
    \vjt
   \end{scope}

   \draw[->] (0,0) -- ({1.5*\ws+1.5*\wj},0);
   \draw[->] (0,12) -- ({1.5*\ws+1.5*\wj},12);
   
   \draw[->] (0,6.49) -- (0,6.5);
   \draw[->] (0,6.69) -- (0,6.7);
   \draw[->] (3*\ws+3*\wj,5.51) -- (3*\ws+3*\wj,5.5);
   \draw[->] (3*\ws+3*\wj,5.31) -- (3*\ws+3*\wj,5.3);
   
   \draw (0,0) -- ({3*\ws+3*\wj},0);
   \draw (0,12) -- ({3*\ws+3*\wj},12);
   
  
  \end{tikzpicture}
  
  \caption{The combinatorial structure of the polyhedral surface $\nabla$ with the induced length structure after merging coplanar faces.}
  \label{fig:nabla-domain}
 \end{figure}
 
 Theorem A then says that $\nabla$ is flat and immersed, and has the topology of a Klein bottle.
 
 \begin{proof}[Proof of Theorem A]
  We start by proving flatness. The interior vertices of each $\mathcal J^j$ and each $\mathcal S^j$ are flat by Theorems \ref{thm:example-flat} and \ref{thm:straight-flat}, leaving only the glued vertices of $\nabla$. But since each boundary vertex of a tube connection has angle sum $\pi$, and boundary vertices are identified in pairs in the construction of $\nabla$, the angle sum at a glued vertex in $\nabla$ is also $2\pi$.
  
  To prove the immersion property, we start by recalling that all interior vertices of $\mathcal J^j$ and $\mathcal S^j$ are immersed by Theorems \ref{thm:example-locally-injective} and Proposition \ref{prop:str-jt-embedded}. Each glued vertex $u$ is shared by four faces and lies in a plane shared by and separating two neighboring tube connections. The images of the two pairs of faces meeting kitty-corner at $u$ lie on opposite sides of this plane and intersect only at the image of $u$, so by Lemma \ref{lem:kitty}, $\nabla$ is immersed at $u$.
  
  In the construction of $\nabla$, the boundary edges of each tube connection become edges of exactly two faces, so $\nabla$ is a polyhedral surface without boundary, and since vertices of $J^j$ are identified with those of $S^{j-1}$ and $S^{j+1}$, $\nabla$ is connected. Thus since $\nabla$ is a flat connected polyhedral surface without boundary, the discrete Gauss-Bonnet Theorem tells us that $\nabla$ must have the topology of either a torus or Klein bottle. To see that $\nabla$ is not orientable, note that the six copies of the combinatorial structure $T$ are glued along the boundary curves so that the orientation reverses an odd number of times. Formally, let $e$ and $f$ be the upward-oriented left and right boundary curves of $T$ (as shown in Figure \ref{fig:j}), so that $e$ and $f$ belong to the same nontrivial homology class. Then because $\Phi^j$ and $\Psi^{j+1}$ agree along $f$, the vertex identification rule tells us that $f_{\mathcal J^j}=f_{\mathcal S^{j+1}}$, where the subscript indicates whose copy of $T$ the cycle lives in. However, $\Phi^j$ and $\Psi^{j-1}$ reverse the order of vertices for $e$, and so we have $e_{\mathcal J^j}=-e_{\mathcal S^{j-1}}$. Thus we can say
  $$e_{\mathcal J^j}\sim f_{\mathcal J^j}=f_{\mathcal S^{j+1}}\sim e_{\mathcal S^{j+1}}=-e_{\mathcal J^{j+2}},$$
  which, applied three times and recalling indices are taken modulo three, shows $e_{\mathcal J^j}\sim -e_{\mathcal J^{j}}$ and that the homology group has torsion.
 \end{proof}

 To conclude, we briefly mention that coplanar faces meeting at the glued vertices of $\nabla$ can be merged, so these vertices can be omitted from the final vertex count. Each of the six tube connections has 18 interior vertices, giving a total of $108$ vertices. The face count of $168$ can be confirmed in Figure \ref{fig:nabla-domain}.

\section{Acknowledgements}

The author acknowledges support of the Institut Henri Poincar\'e (UAR 839 CNRS-Sorbonne Universit\'e), and LabEx CARMIN (ANR-10-LABX-59-01).

\bibliographystyle{plain}
\bibliography{klein-bib}

\appendix

\section{Appendix: Local Injectivity Proof} \label{sec:appendix}

\begin{proof}[Proof of Theorem \ref{thm:example-locally-injective}]
 As in the theorem, let $\mathscr J=\mathscr J_{6,4,\frac{\pi}{3},\frac{3\pi}{2}}$, and let $\vec\alpha,\vec\gamma$ be the parameter sets generated by $\alpha=3.1,\gamma=2.5$. We will show that $\mathcal C_{\mathscr J,\vec\alpha,\vec\gamma}$ is immersed at each interior vertex $a_i$ and $c_i$ for $i=0,\ldots,6$. By symmetry, the remaining vertices are immersed as well.
 
 For each pair of kitty-corner faces meeting an interior vertex $a_i$ or $c_i$, we find the matrix $M$ from Lemma \ref{lem:m}, and we numerically verify that at least one of the cofactors of $M$ is positive and another is negative. Since the vector of cofactors of a $3\times4$ matrix lies in its kernel, this simultaneously proves that $M$ has rank 3 and that the corresponding condition of Lemma \ref{lem:m} is satisfied.
 
 All calculations were done in Mathematica using interval arithmetic to certify the signs of entries claimed to be nonzero. Specifically for each cofactor calculation, we assigned an interval of radius $10^{-20}$ containing each input parameter, and calculated an interval containing the desired cofactor using outward-rounding interval arithmetic; the sign of the cofactor is certified when this interval does not contain $0$. The radii of the intervals around the cofactors were all less than $10^{-6}$, and in Tables \ref{tab:a-kitty-corner-check} and \ref{tab:c-kitty-corner-check}, we round the center of each interval to 5 decimal places so that the error in these approximations is less than $10^{-5}$.
 
 To organize the data, in Figure \ref{fig:face-names}, we give names to the faces meeting at the $a_i$ and $c_i$ vertices of the domain $T$. For each vertex $a_i$, we define $M_i^{j,k}$ to be the matrix from Lemma \ref{lem:m} determining whether the images of the faces $F_j$ and $F_k$ meeting kitty-corner at $a_i$ as shown in Figure \ref{fig:face-names} intersect away from the image of $a_i$. The cofactors of each such $M_i^{j,k}$ are listed in Table \ref{tab:a-kitty-corner-check}. Similarly for each vertex $c_i$, we define $N_i^{j,k}$ to be the matrix determining whether the images of the faces $G_j$ and $G_k$ meeting kitty-corner at $c_i$ intersect away from the image of $c_i$, and we list the cofactors of each such $N_i^{j,k}$ in Table \ref{tab:c-kitty-corner-check}.
\end{proof}

 \begin{figure}
  \centering
  \begin{subfigure}[b]{0.45\textwidth}
    \centering
    \begin{tikzpicture}[scale=1]

   \def\w{4.5}
   \def\wa{1.5}
   \def\wb{2}
   \def\wc{4}
   \def\we{5}
   
   \clip (-1,-0.5) rectangle (\w,2.5);
   
   \fill[gray!20] (0,-0.5) -- (\w,-0.5) -- (\w,2.5) -- (0,2.5) -- (0,-0.5);
   
   \foreach \i in {0,...,2}
    {
     \draw (0,\i) -- (\w,\i);
    }
    \draw (0,-0.5) -- (0,2.5);
    
    \foreach \i in {0,2,4}
     {
      \draw (\wb,\i-2) -- (\wa,\i-1) -- (\wb,\i);
      \draw (\wb,\i-2) -- (\wc,\i-1) -- (\wb,\i);
      \draw (\we,\i-2) -- (\wc,\i-1) -- (\we,\i);
     }

   \draw (0,0) node[anchor=east] {$y_{i-1}$};
   \draw (0,1) node[anchor=east] {$w_i$};
   \draw (0,2) node[anchor=east] {$y_{i+1}$};

   \draw (\wa,1) node[anchor=south east] {$a_i$};

   \draw (\wc,1) node[anchor=south] {$c_i$};

   \draw (\wb,0) node[anchor=north east] {$c_{i-1}$};
   \draw (\wb,2) node[anchor=south east] {$c_{i+1}$};
   
   \draw (2.25,1.3) node[black!60] {$F_1$};
   \draw (0.75,1.5) node[black!60] {$F_2$};
   \draw (0.75,0.5) node[black!60] {$F_3$};
   \draw (2.25,0.7) node[black!60] {$F_4$};
   
  \end{tikzpicture}
    \caption{}
    \label{fig:face-names-ai}
  \end{subfigure}
  \hfill 
  \begin{subfigure}[b]{0.45\textwidth}
    \centering
    \begin{tikzpicture}[scale=1]
    \def\w{10}
   \def\h{8}
   \def\wa{6}
   \def\wb{6.5}
   \def\wc{8}
   \def\we{9}
   
   \clip (5.5,-0.5) rectangle (11,2.5);
   
   
   \fill[gray!20] (5,-0.5) -- (\w, -0.5) -- (\w, 2.5) -- (5,2.5) -- (5,-0.5);
   

   \foreach \i in {0,1,2}
    {
     \draw (0,\i) -- (\w,\i);
    }
    \draw (\w,-0.5) -- (\w,2.5);
    
    \foreach \i in {0,2,4}
     {
      \draw (\wb,\i-2) -- (\wa,\i-1) -- (\wb,\i);
      \draw (\wb,\i-2) -- (\wc,\i-1) -- (\wb,\i);
      \draw (\we,\i-2) -- (\wc,\i-1) -- (\we,\i);
     }

   
   \draw (\w,0) node[anchor=west] {$w_{i-1}$};
   \draw (\w,1) node[anchor=west] {$y_i$};
   \draw (\w,2) node[anchor=west] {$w_{i+1}$};

   \draw (\wa,1) node[anchor=south east] {$a_i$};

   \draw (\we,0) node[anchor=north west] {$a_{i-1}$};
   \draw (\we,2) node[anchor=south west] {$a_{i+1}$};
   
   \draw (\wc,1) node[anchor=south] {$c_i$};

   \draw (\wb,0) node[anchor=north east] {$c_{i-1}$};
   \draw (\wb,2) node[anchor=south east] {$c_{i+1}$};
   
   \draw (9.25, 1.5) node[black!60] {$G_1$};
   \draw (\wc, 1.7) node[black!60] {$G_2$};
   \draw (7, 1.3) node[black!60] {$G_3$};
   \draw (7, 0.7) node[black!60] {$G_4$};
   \draw (\wc, 0.3) node[black!60] {$G_5$};
   \draw (9.25, 0.5) node[black!60] {$G_6$};

   \end{tikzpicture}
   \caption{}
    \label{fig:face-names-ci}
  \end{subfigure}
  
  \caption{(a) Here we have zoomed in on a piece of the combinatorial structure $T$ around the vertex $a_i$ and given names to the four faces $F_j$ meeting at $a_i$. (b) Similar for the six faces $G_j$ meeting at $c_i$.}\label{fig:face-names}
 \end{figure}
 
 \begin{table}
  \centering
  \scriptsize
\begin{tabular}{|c|c|c||TTTT|}
 \hline
 $i$ & $j$ & $k$ & \multicolumn{4}{c|}{cofactors of $M_i^{j,k}$}\\
 \hline
 0 & 1 & 3 & \cellcolor{lime}~8.59404 & ~0. & \
\cellcolor{pink}-8.59404 & ~0. \\ 
 0 & 4 & 2 & ~0. & \cellcolor{pink}-8.59404 & ~~0. & \
\cellcolor{lime}8.59404 \\ 
 1 & 1 & 3 & \cellcolor{pink}-0.41733 & \cellcolor{lime}~0.33528 & \
\cellcolor{lime}~0.80361 & \cellcolor{lime}~2.30885 \\ 
 1 & 4 & 2 & \cellcolor{lime}~2.30885 & \cellcolor{lime}~0.41733 & \
\cellcolor{lime}~0.33528 & \cellcolor{pink}-0.80361 \\ 
 2 & 1 & 3 & \cellcolor{lime}~6.66177 & ~0. & \
\cellcolor{pink}-6.66177 & ~0. \\ 
 2 & 4 & 2 & ~0. & \cellcolor{pink}-6.66177 & ~~0. & \
\cellcolor{lime}6.66177 \\ 
 3 & 1 & 3 & \cellcolor{pink}-0.71628 & \cellcolor{lime}~0.3035 & \
\cellcolor{lime}~0.44324 & \cellcolor{lime}~1.91198 \\ 
 3 & 4 & 2 & \cellcolor{lime}~1.91198 & \cellcolor{lime}~0.71628 & \
\cellcolor{lime}~0.3035 & \cellcolor{pink}-0.44324 \\ 
 4 & 1 & 3 & \cellcolor{lime}~2.85207 & ~0. & \
\cellcolor{pink}-2.85207 & ~0. \\ 
 4 & 4 & 2 & ~0. & \cellcolor{pink}-2.85207 & ~~0. & \
\cellcolor{lime}2.85207 \\ 
 5 & 1 & 3 & \cellcolor{pink}-0.61696 & \cellcolor{lime}~0.24393 & \
\cellcolor{lime}~0.07374 & \cellcolor{lime}~1.55463 \\ 
 5 & 4 & 2 & \cellcolor{lime}~1.55463 & \cellcolor{lime}~0.61696 & \
\cellcolor{lime}~0.24393 & \cellcolor{pink}-0.07374 \\ 
 6 & 1 & 3 & \cellcolor{lime}~0.77252 & ~0. & \
\cellcolor{pink}-0.77252 & ~0. \\ 
 6 & 4 & 2 & ~0. & \cellcolor{pink}-0.77252 & ~~0. & \
\cellcolor{lime}0.77252 \\ 
 \hline
\end{tabular}
  \caption{The cofactors of $M_i^{j,k}$ from the proof of Theorem \ref{thm:example-locally-injective}. Values certified positive are highlighted in green, and values certified negative are highlighted in pink.}
    \label{tab:a-kitty-corner-check}
\end{table}

\begin{table}
  
    \centering
    \scriptsize
\begin{tabular}{|c|c|c||TTTT|}
 \hline
 $i$ & $j$ & $k$ & \multicolumn{4}{c|}{cofactors of $N_i^{j,k}$}\\
 \hline    
 0 & 1 & 3 & ~0. & \cellcolor{lime}~0.58525 & \cellcolor{lime}~~1.1705 \
& \cellcolor{pink}-0.58525 \\ 
 0 & 1 & 4 & \cellcolor{pink}-1.1705 & \cellcolor{lime}~0.82767 & \
\cellcolor{lime}~0.82767 & \cellcolor{pink}-1.1705 \\ 
 0 & 1 & 5 & \cellcolor{lime}~3.76147 & \cellcolor{pink}-2.5 & \
\cellcolor{pink}-0.22594 & \cellcolor{pink}-0.82767 \\ 
 0 & 2 & 4 & \cellcolor{pink}-0.58525 & \cellcolor{lime}~1.1705 & \
\cellcolor{lime}~0.58525 & ~0. \\ 
 0 & 2 & 5 & \cellcolor{lime}~1.99371 & \cellcolor{pink}-3.76147 & \
\cellcolor{pink}-2.81953 & \cellcolor{pink}-0.58525 \\ 
 0 & 3 & 5 & \cellcolor{lime}~0.22594 & \cellcolor{pink}-1.99371 & \
\cellcolor{pink}-2.81953 & \cellcolor{pink}-0.58525 \\ 
 0 & 6 & 2 & \cellcolor{pink}-0.58525 & \cellcolor{pink}-2.81953 & \
\cellcolor{pink}-1.99371 & \cellcolor{lime}~0.22594 \\ 
 0 & 6 & 3 & \cellcolor{pink}-0.58525 & \cellcolor{pink}-2.81953 & \
\cellcolor{pink}-3.76147 & \cellcolor{lime}~1.99371 \\ 
 0 & 6 & 4 & \cellcolor{pink}-0.82767 & \cellcolor{pink}-0.22594 & \
\cellcolor{pink}-2.5 & \cellcolor{lime}~3.76147 \\ 
 1 & 1 & 3 & \cellcolor{pink}-0.33528 & \cellcolor{lime}~0.59404 & \
\cellcolor{pink}-0.05119 & \cellcolor{lime}~0.51038 \\ 
 1 & 1 & 4 & \cellcolor{lime}~0.76608 & \cellcolor{pink}-1.44669 & \
\cellcolor{pink}-1.23239 & \cellcolor{lime}~0.05119 \\ 
 1 & 1 & 5 & \cellcolor{pink}-8.74266 & \cellcolor{lime}~2.68468 & \
\cellcolor{lime}~5.8525 & \cellcolor{lime}~1.23239 \\ 
 1 & 2 & 4 & \cellcolor{lime}~0.58525 & \cellcolor{pink}-0.76608 & \
\cellcolor{lime}~0.43379 & \cellcolor{pink}-0.33528 \\ 
 1 & 2 & 5 & \cellcolor{pink}-1.81265 & \cellcolor{lime}~8.74266 & \
\cellcolor{lime}~1.5606 & \cellcolor{pink}-0.43379 \\ 
 1 & 3 & 5 & \cellcolor{pink}-8.59404 & \cellcolor{lime}~1.81265 & \
\cellcolor{pink}-0.70448 & \cellcolor{lime}~0.58525 \\ 
 1 & 6 & 2 & \cellcolor{lime}~0.51038 & \cellcolor{lime}~1.5606 & \
\cellcolor{lime}~1.69265 & \cellcolor{pink}-5.8525 \\ 
 1 & 6 & 3 & \cellcolor{pink}-0.59404 & \cellcolor{pink}-0.70448 & \
\cellcolor{lime}~6.9817 & \cellcolor{pink}-1.69265 \\ 
 1 & 6 & 4 & \cellcolor{lime}~1.44669 & \cellcolor{lime}~8.59404 & \
\cellcolor{lime}~2.68468 & \cellcolor{pink}-6.9817 \\ 
 2 & 1 & 3 & ~0. & \cellcolor{lime}~0.60232 & \
\cellcolor{lime}~1.20464 & \cellcolor{pink}-0.60232 \\ 
 2 & 1 & 4 & \cellcolor{pink}-1.02075 & \cellcolor{lime}~0.75749 & \
\cellcolor{lime}~0.90848 & \cellcolor{pink}-1.20464 \\ 
 2 & 1 & 5 & \cellcolor{lime}~4.03705 & \cellcolor{pink}-2.59407 & \
\cellcolor{pink}-0.4859 & \cellcolor{pink}-0.90848 \\ 
 2 & 2 & 4 & \cellcolor{pink}-0.51038 & \cellcolor{lime}~1.02075 & \
\cellcolor{lime}~0.51038 & ~0. \\ 
 2 & 2 & 5 & \cellcolor{lime}~2.24424 & \cellcolor{pink}-4.03705 & \
\cellcolor{pink}-2.94953 & \cellcolor{pink}-0.51038 \\ 
 2 & 3 & 5 & \cellcolor{lime}~0.45144 & \cellcolor{pink}-2.24424 & \
\cellcolor{pink}-2.94953 & \cellcolor{pink}-0.51038 \\ 
 2 & 6 & 2 & \cellcolor{pink}-0.60232 & \cellcolor{pink}-2.94953 & \
\cellcolor{pink}-2.16539 & \cellcolor{lime}~0.4859 \\ 
 2 & 6 & 3 & \cellcolor{pink}-0.60232 & \cellcolor{pink}-2.94953 & \
\cellcolor{pink}-3.84487 & \cellcolor{lime}~2.16539 \\ 
 2 & 6 & 4 & \cellcolor{pink}-0.75749 & \cellcolor{pink}-0.45144 & \
\cellcolor{pink}-2.59407 & \cellcolor{lime}~3.84487 \\ 
 3 & 1 & 3 & \cellcolor{pink}-0.3035 & \cellcolor{lime}~0.51211 & \
\cellcolor{lime}~0.01981 & \cellcolor{lime}~0.40684 \\ 
 3 & 1 & 4 & \cellcolor{lime}~0.74052 & \cellcolor{pink}-1.21021 & \
\cellcolor{pink}-0.96521 & \cellcolor{pink}-0.01981 \\ 
 3 & 1 & 5 & \cellcolor{pink}-8.05564 & \cellcolor{lime}~3.42714 & \
\cellcolor{lime}~2.58024 & \cellcolor{lime}~0.96521 \\ 
 3 & 2 & 4 & \cellcolor{lime}~0.60232 & \cellcolor{pink}-0.74052 & \
\cellcolor{lime}~0.42056 & \cellcolor{pink}-0.3035 \\ 
 3 & 2 & 5 & \cellcolor{pink}-2.30925 & \cellcolor{lime}~8.05564 & \
\cellcolor{lime}~2.2712 & \cellcolor{pink}-0.42056 \\ 
 3 & 3 & 5 & \cellcolor{pink}-7.47104 & \cellcolor{lime}~2.30925 & \
\cellcolor{pink}-1.5863 & \cellcolor{lime}~0.60232 \\ 
 3 & 6 & 2 & \cellcolor{lime}~0.40684 & \cellcolor{lime}~2.2712 & \
\cellcolor{lime}~1.7059 & \cellcolor{pink}-2.58024 \\ 
 3 & 6 & 3 & \cellcolor{pink}-0.51211 & \cellcolor{pink}-1.5863 & \
\cellcolor{lime}~3.16484 & \cellcolor{pink}-1.7059 \\ 
 3 & 6 & 4 & \cellcolor{lime}~1.21021 & \cellcolor{lime}~7.47104 & \
\cellcolor{lime}~3.42714 & \cellcolor{pink}-3.16484 \\ 
 4 & 1 & 3 & ~0. & \cellcolor{lime}~0.47523 & \
\cellcolor{lime}~0.95047 & \cellcolor{pink}-0.47523 \\ 
 4 & 1 & 4 & \cellcolor{pink}-0.81367 & \cellcolor{lime}~0.64191 & \
\cellcolor{lime}~0.68417 & \cellcolor{pink}-0.95047 \\ 
 4 & 1 & 5 & \cellcolor{lime}~3.5498 & \cellcolor{pink}-2.55177 & \
\cellcolor{pink}-0.50634 & \cellcolor{pink}-0.68417 \\ 
 4 & 2 & 4 & \cellcolor{pink}-0.40684 & \cellcolor{lime}~0.81367 & \
\cellcolor{lime}~0.40684 & ~0. \\ 
 4 & 2 & 5 & \cellcolor{lime}~1.92279 & \cellcolor{pink}-3.5498 & \
\cellcolor{pink}-2.76682 & \cellcolor{pink}-0.40684 \\ 
 4 & 3 & 5 & \cellcolor{lime}~0.29578 & \cellcolor{pink}-1.92279 & \
\cellcolor{pink}-2.76682 & \cellcolor{pink}-0.40684 \\ 
 4 & 6 & 2 & \cellcolor{pink}-0.47523 & \cellcolor{pink}-2.76682 & \
\cellcolor{pink}-2.26319 & \cellcolor{lime}~0.50634 \\ 
 4 & 6 & 3 & \cellcolor{pink}-0.47523 & \cellcolor{pink}-2.76682 & \
\cellcolor{pink}-4.02004 & \cellcolor{lime}~2.26319 \\ 
 4 & 6 & 4 & \cellcolor{pink}-0.64191 & \cellcolor{pink}-0.29578 & \
\cellcolor{pink}-2.55177 & \cellcolor{lime}~4.02004 \\ 
 5 & 1 & 3 & \cellcolor{pink}-0.24393 & \cellcolor{lime}~0.3884 & \
\cellcolor{pink}-0.17522 & \cellcolor{lime}~0.35487 \\ 
 5 & 1 & 4 & \cellcolor{lime}~0.39661 & \cellcolor{pink}-0.97287 & \
\cellcolor{pink}-0.8857 & \cellcolor{lime}~0.17522 \\ 
 5 & 1 & 5 & \cellcolor{pink}-4.53551 & \cellcolor{lime}~4.14925 & \
\cellcolor{lime}~0.77252 & \cellcolor{lime}~0.8857 \\ 
 5 & 2 & 4 & \cellcolor{lime}~0.47523 & \cellcolor{pink}-0.39661 & \
\cellcolor{lime}~0.42976 & \cellcolor{pink}-0.24393 \\ 
 5 & 2 & 5 & \cellcolor{pink}-2.04962 & \cellcolor{lime}~4.53551 & \
\cellcolor{lime}~1.44239 & \cellcolor{pink}-0.42976 \\ 
 5 & 3 & 5 & \cellcolor{pink}-3.1239 & \cellcolor{lime}~2.04962 & \
\cellcolor{pink}-0.43166 & \cellcolor{lime}~0.47523 \\ 
 5 & 6 & 2 & \cellcolor{lime}~0.35487 & \cellcolor{lime}~1.44239 & \
\cellcolor{lime}~1.66866 & \cellcolor{pink}-0.77252 \\ 
 5 & 6 & 3 & \cellcolor{pink}-0.3884 & \cellcolor{pink}-0.43166 & \
\cellcolor{lime}~1.66941 & \cellcolor{pink}-1.66866 \\ 
 5 & 6 & 4 & \cellcolor{lime}~0.97287 & \cellcolor{lime}~3.1239 & \
\cellcolor{lime}~4.14925 & \cellcolor{pink}-1.66941 \\ 
 6 & 1 & 3 & ~0. & \cellcolor{lime}~0.35487 & \
\cellcolor{lime}~0.70974 & \cellcolor{pink}-0.35487 \\ 
 6 & 1 & 4 & \cellcolor{pink}-0.70974 & \cellcolor{lime}~0.50186 & \
\cellcolor{lime}~0.50186 & \cellcolor{pink}-0.70974 \\ 
 6 & 1 & 5 & \cellcolor{lime}~3.57331 & \cellcolor{pink}-2.48504 & \
\cellcolor{pink}-0.05893 & \cellcolor{pink}-0.50186 \\ 
 6 & 2 & 4 & \cellcolor{pink}-0.35487 & \cellcolor{lime}~0.70974 & \
\cellcolor{lime}~0.35487 & ~0. \\ 
 6 & 2 & 5 & \cellcolor{lime}~1.81612 & \cellcolor{pink}-3.57331 & \
\cellcolor{pink}-2.56838 & \cellcolor{pink}-0.35487 \\ 
 6 & 3 & 5 & \cellcolor{lime}~0.05893 & \cellcolor{pink}-1.81612 & \
\cellcolor{pink}-2.56838 & \cellcolor{pink}-0.35487 \\ 
 6 & 6 & 2 & \cellcolor{pink}-0.35487 & \cellcolor{pink}-2.56838 & \
\cellcolor{pink}-1.81612 & \cellcolor{lime}~0.05893 \\ 
 6 & 6 & 3 & \cellcolor{pink}-0.35487 & \cellcolor{pink}-2.56838 & \
\cellcolor{pink}-3.57331 & \cellcolor{lime}~1.81612 \\ 
 6 & 6 & 4 & \cellcolor{pink}-0.50186 & \cellcolor{pink}-0.05893 & \
\cellcolor{pink}-2.48504 & \cellcolor{lime}~3.57331 \\ 
\hline
\end{tabular}

  \caption{The cofactors of $N_i^{j,k}$ from the proof of Theorem \ref{thm:example-locally-injective}. Values certified positive are highlighted in green, and values certified negative are highlighted in pink.}
  \label{tab:c-kitty-corner-check}
 \end{table}

\end{document}